\theoremstyle{plain}
\newtheorem{lemma}{Lemma}[section]
\newtheorem{theorem}[lemma]{Theorem}
\newtheorem{proposition}[lemma]{Proposition}
\newtheorem{prop}[lemma]{{Proposition}}
\newtheorem{corollary}[lemma]{Corollary}
\newtheorem{convention}[lemma]{Convention}
\newtheorem*{convention*}{Convention}
\theoremstyle{definition}
\newtheorem{definition}[lemma]{Definition}
\newtheorem{example}[lemma]{Example}
\newtheorem{remark}[lemma]{Remark}
\newtheorem*{definition*}{Definition}
\theoremstyle{remark}
\newcommand{\E}{\mathcal{E}}
\newcommand{\lb}{\left[ \cdot\,,\cdot\right] }
\newcommand{\dd}{\mathrm{d}}
\newcommand{\X}{\mathfrak {X} }
\title{Higher algebras of affine varieties}
\let\oldtocsection=\tocsection
\let\oldtocsubsection=\tocsubsection
\let\oldtocsubsubsection=\tocsubsubsection
\renewcommand{\tocsection}[2]{\hspace{0em}\oldtocsection{#1}{#2}}
\renewcommand{\tocsubsection}[2]{\hspace{1em}\oldtocsubsection{#1}{#2}}
\renewcommand{\tocsubsubsection}[2]{\hspace{2em}\oldtocsubsubsection{#1}{#2}}
\thanks{The author acknowledges support of the CNRS Project Miti 80Prime \emph{Granum}.}
\address{Universit\'e de Lorraine\\ CNRS\\ IECL\\ F-57000 Metz, France.}
\title{On Symmetries of singular foliations}
\author{Ruben LOUIS }
\date{\today}
\begin{document}

\maketitle
\begin{abstract}
   This paper shows that a weak symmetry action of a Lie algebra $\mathfrak{g}$ on a singular foliation $\mathcal F$ induces a unique up to homotopy Lie $\infty$-morphism from $\mathfrak{g}$ to the DGLA of vector fields on a universal Lie $\infty$-algebroid of $\mathcal F$. Such a morphism is known as $L_\infty$-algebra action in \cite{MEHTA2012576}. We deduce from this general result several geometrical consequences. For instance, we give an example of a Lie algebra action on an affine sub-variety which cannot be extended on the ambient space. Last, we introduce the notion of bi-submersion towers over a singular foliation and lift symmetries to those.

\end{abstract}
\tableofcontents
\section*{Introduction}
Singular foliations arise frequently in differential or algebraic geometry. Here, following \cite{AndroulidakisIakovos,AndroulidakisZambon,Cerveau, Debord,LLS}, we define a singular foliation on a smooth, complex, algebraic, real analytic manifold $M$  with sheaf of functions $\mathcal O$ to be a subsheaf $\mathcal F \colon U\longrightarrow\mathcal F(U )$ of the sheaf of vector fields $\mathfrak X$, which is closed under the Lie bracket and locally finitely generated as a $\mathcal O$-module. By Hermann's theorem \cite{Hermann}, this is enough to induce a partition of the manifold $M$ into {immersed} submanifolds of possibly different dimensions, called \emph{leaves} of the singular foliation. Singular foliations appear for instance as orbits of Lie group actions with possibly different dimensions or as symplectic leaves of a Poisson structure.  When all the leaves have the same dimension, we recover the usual \textquotedblleft regular foliations\textquotedblright \cite{DiederichKlas, LLL}.

The purpose of this paper is to look at symmetries of singular
foliations. Let $(M,\mathcal F)$ be a foliated manifold. A \emph{global symmetry} of a singular foliation $\mathcal F$ on $M$ is a diffeomorphism $\phi\colon M\longrightarrow M$ which preserves $\mathcal F$, that is, $\phi_*(\mathcal F)=\mathcal{F}$. The image of a leaf through a global symmetry is again a leaf (not necessarily the same leaf).

For $G$ a Lie group, a \emph{strict symmetry action} of $G$ on a foliated manifold $(M,\mathcal{F})$ is a smooth action $G\times M\longrightarrow M$ that acts by global symmetries \cite{GarmendiaAlfonso2}. Infinitesimally, it corresponds to a Lie algebra morphism $\mathfrak{g} \longrightarrow \mathfrak X(M)$ between the Lie algebra $(\mathfrak{g},\lb_\mathfrak{g})$ of $G$ and the Lie algebra of symmetries of $\mathcal F$, i.e., vector fields $X\in \mathfrak X(M)$ such that $[X,\mathcal{F}]\subseteq \mathcal{F}$. A strict symmetry action of $G$ on $M$ goes down to the leaf space $M/\mathcal F$, even though the latter space
is not a manifold. The opposite direction is more sophisticated, since a strict symmetry action of $G$ on $M/\mathcal{F}$ does not induce a strict action over $M$ in general. However, it makes sense to consider linear maps $\varrho\colon\mathfrak g\longrightarrow \mathfrak X(M)$ that satisfy $[\varrho(x),\mathcal{F}]\subset\mathcal{F}$ for all $x\in\mathfrak{g}$, and which are Lie algebra morphisms up to $\mathcal{F}$, namely, $\varrho([x,y]_\mathfrak{g})-[\varrho(x),\varrho(y)]\in\mathcal{F}$ for all $x,y\in\mathfrak{g}$. The latter linear maps are called \textquotedblleft \emph{weak symmetry actions}\textquotedblright. These actions induce a \textquotedblleft strict  action\textquotedblright on the leaf space i.e.,  a Lie algebra morphism $\mathfrak g\longrightarrow \mathfrak X(M/\mathcal{F})$, whenever $M/\mathcal{F}$ is a manifold, and an action of $G$ on $M/\mathcal{F}$, at least if $G$ is connected. 

Now, on a priori different subject. Let us emphasize on the following observation:  An infinitesimal action of a Lie algebra $\mathfrak{g}$ on a manifold $M$ is a Lie algebra morphism $\mathfrak g\longrightarrow \mathfrak{X}(M)$. Replacing $M$ by a Lie $\infty$-algebroid $(E,Q)$ seen as a $Q$-manifold, one expects to define {Lie algebra actions on $(E,Q)$} as Lie $\infty$-algebra morphisms $\mathfrak{g}[1]\longrightarrow \mathfrak{X}(E,Q)[1]$, the latter space being a DGLA of vector fields on that $Q$-manifold \cite{KotovAlexei2022Nfo}. Such Lie $\infty$-morphisms were studied by Mehta and Zambon \cite{MEHTA2012576} as “$L_\infty$-algebra actions”, and various results about those are given. In particular, these authors give several equivalent definitions and interpretations of those. It is easy to check that such a Lie $\infty$-morphism induces a weak symmetry action of $\mathfrak{g}$ on the singular foliation induced by $Q$.

In  \cite{LLS,LavauSylvain}, it is shown that behind every singular foliation or more generally any Lie-Rinehart algebra \cite{CLRL} there exists a Lie $\infty$-algebroid structure which is unique up to homotopy called the \emph{universal} Lie $\infty$-algebroid. Here is a natural question:  what does a symmetry of a singular foliation $\mathcal{F}$ induce on a universal Lie $\infty$-algebroid of $\mathcal{F}$?  Theorem \ref{main} of this paper gives an answer to that question. It states that any weak symmetry action of a Lie algebra on a singular foliation $\mathcal{F}$ can be lifted to a Lie $\infty$-morphism valued in the DGLA of vector fields on a universal Lie $\infty$-algebroid of $\mathcal{F}$. 
 Furthermore, Theorem \ref{main} says this lift is unique modulo homotopy equivalence.  This goes in the same direction as \cite{GarmendiaAlfonso2} which already underlined Lie-2-group structures associated  to strict symmetry action of Lie groups.

This result gives several geometric consequences. Here is an elementary question: can a Lie algebra action $\mathfrak{g} \to \mathfrak X(W)$ on an affine variety $W \subset \mathbb C^d$ be extended to a Lie algebra action $\mathfrak{g} \to \mathfrak X(\mathbb C^d)$ on $\mathbb{C}^d $?
    Said differently: it is trivial that any vector field on $W$ extends to $\mathbb C^d $, but can this extension be done in such a manner that it preserves the Lie bracket? Although no \textquotedblleft $\infty$-oids\textquotedblright appears in this question, which seems to be a pure algebraic geometry question, we claim that the answer goes through Lie $\infty $-algebroids and singular foliations. More precisely, the idea is then to say that any $\mathfrak g $-action on $W$ induces a weak symmetry action on the singular foliation $\mathcal I_W\mathfrak{X}(\mathbb{C}^d)$ of all vector fields vanishing on $W$ (here $\mathcal I_W$ is the ideal that defines $W$). By Theorem \ref{main}, we know that it is possible to lift any weak symmetry action of singular foliation into a  Lie $\infty $-morphism.  {The second order Taylor coefficient  of that Lie $\infty$-morphism, composed with the projection on vector fields of  arity $-1$, is of the form $\iota_{\eta(x,y)}$ where $\eta\colon \wedge^2\mathfrak g\longrightarrow \Gamma(E_{-1})$ satisfies $\varrho([x,y]_\mathfrak{g})-[\varrho(x), \varrho(y)]=\rho(\eta(x,y))$,\,(here $\rho\colon E_{-1}\longrightarrow TM$ is the anchor map of a universal Lie $\infty$-algebroid $(E,Q)$ of $\mathcal F$ and $\iota_e$ stands for the vertical vector field associated to a section $e\in \Gamma(E_{-1})$)}. But is it possible to build such a Lie $\infty $-morphism where the arity $-1$ of the second order Taylor coefficient is zero? There are cohomological obstructions. In some specific cases, obstruction classes appear on some cohomology, although in general the obstruction is rather a Maurer-Cartan-like equations that may or may not have solutions. We show if this class is non-zero, then we cannot manage to have $\eta=0$, and then no strict action exists.\\

The outline of this paper is as follows: In Section \ref{sec:1} we present some definitions and facts on weak symmetry actions of  Lie algebras on singular foliations and give some examples. In Section \ref{sec:3} we state the main results of this paper and present their proofs. In Section \ref{sec:4} we describe the relation between weak symmetry actions and Lie $\infty$-algebroids that have some special properties. In Section \ref{sec:5}  we define an obstruction class for extending a Lie algebra action on an affine variety to ambient space. In the last section of the paper, we introduce the  notion of “bi-submersion tower” over singular foliations that we denote by $\mathcal{T}_B$. The latter notion, as the name suggests, is a family of  “bi-submersions” which are built one over the other. The concept of bi-submersion over singular foliations has been introduced in \cite{AndroulidakisIakovos} and it is used  in $K$-theory \cite{Androulidakis-Iakovos-Georges} or differential geometry \cite{AndroulidakisIakovos2011Pcoa,AndroulidakisZambon,GarmendiaAlfonsoMarcoZambon}. We show that such a bi-submersion tower over a singular foliation $\mathcal{F}$ exists if and only if $\mathcal{F}$ admits a geometric resolution. Provided that it exists, we show in Theorem \ref{thm:final-sym} that  any infinitesimal action of a Lie algebra $\mathfrak{g}$ on the singular foliation $\mathcal{F}$ lifts to the bi-submersion tower $\mathcal{T}_B$.

In Appendix \ref{sec:2} and \ref{appendix-homotopies}, we review Lie $\infty$-algebroid structures and their morphisms and homotopies in order to fix notations.


\section*{Acknowledgement}
I am very grateful to my supervisor, Professor Camille Laurent-Gengoux, for guiding and giving me useful ideas all along of this paper. I would also like to acknowledge the full financial support for this PhD from R\'egion Grand Est. Also, I thank the CNRS MITI 80Prime project GRANUM, and the Institut Henri Poincaré for hosting me in November 2021. I would like to thank the referee for his/her careful reading, and his/her advices that helped to improve the presentation.
\section{Definitions and examples of weak and strict symmetry actions}
\label{sec:1}
\begin{convention}
Throughout this paper, $M$ stands for a smooth or complex manifold, or an affine variety over $\mathbb{C}$. We will denote the sheaf of smooth or complex, or regular functions on $M$ by $\mathcal O$ and the sheaf of vector fields on $M$ by $\mathfrak{X}(M)$, and $X[f]$
stands for a vector field $X\in\mathfrak{X}(M)$ applied to $f\in \mathcal O$. Also, $\mathbb K$ stands for $\mathbb R$ or $\mathbb C$.
\end{convention}

\begin{definition}
Let $\mathcal F\subset\mathfrak{X}(M)$ be a singular foliation on $M$. 
\begin{itemize}
    \item A diffeomorphism $\phi\colon M\longrightarrow M$ is said to be a \emph{symmetry} of $\mathcal F$, if $\phi_*(\mathcal F)=\mathcal F$.
    
    \item A vector field $X\in\mathfrak{X}(M)$ is said to be an \emph{infinitesimal symmetry of $\mathcal F$}, if $[X,\mathcal F]\subset\mathcal{F}$. The Lie algebra of infinitesimal symmetries of $\mathcal{F}$ is denoted by $\mathfrak{s}(\mathcal F)$.\\
\end{itemize}
In particular, $\mathcal F\subset\mathfrak{s}(\mathcal F)$, since $[\mathcal F,\mathcal F]\subset\mathcal{F}$. The latter are called \emph{internal symmetries} of $\mathcal{F}$. 
\end{definition}

\begin{proposition}\cite{AndroulidakisIakovos,GarmendiaAlfonso}\label{prop:symm}
Let $M$ be a smooth or complex manifold. The flow of an infinitesimal symmetry of $\mathcal F$, if it exists, is a symmetry of $\mathcal{F}$.
\end{proposition}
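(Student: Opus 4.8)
The plan is to reduce the statement to a local one and to use the local finite generation of $\mathcal F$ in an essential way: the condition $[X,\mathcal F]\subseteq\mathcal F$ turns the pullback of a generator along the flow into the solution of a linear ordinary differential equation whose solutions are forced to stay inside the module $\mathcal F$. Denote by $\phi_t$ the (locally defined) flow of the infinitesimal symmetry $X$, and recall the standard identity $\frac{\mathrm{d}}{\mathrm{d}t}\,\phi_t^*Y=\phi_t^*[X,Y]$ valid for any vector field $Y$, where $\phi_t^*Y:=(T\phi_t)^{-1}\circ Y\circ\phi_t$.

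First I would fix a point of $M$ and, after shrinking to a suitable open neighbourhood $U$, choose finitely many generators $X_1,\dots,X_r\in\mathcal F(U)$ of $\mathcal F$ as an $\mathcal O$-module, which exist by local finite generation. Since $X$ is an infinitesimal symmetry, $[X,X_i]\in\mathcal F$, so there are functions $a_{ij}\in\mathcal O(U)$ with $[X,X_i]=\sum_{j}a_{ij}X_j$. Setting $Y_i(t):=\phi_t^*X_i$ and combining the identity above with $\phi_t^*(fY)=(f\circ\phi_t)\,\phi_t^*Y$, one obtains the linear system $\frac{\mathrm{d}}{\mathrm{d}t}Y_i(t)=\sum_j (a_{ij}\circ\phi_t)\,Y_j(t)$ subject to $Y_i(0)=X_i$.

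The key step is then to show that the solution of this system remains in $\mathcal F$, which is \emph{not} automatic because $\mathcal F$ need not be a free module. To this end I would solve, pointwise in $p\in U$ and parametrically, the matrix equation $\dot M(t)=A(t)M(t)$ with $M(0)=\mathrm{Id}$, where $A(t)_{ij}:=a_{ij}\circ\phi_t$; this has a unique smooth solution. Defining $\tilde Y_i(t):=\sum_j M_{ij}(t)X_j\in\mathcal F(U)$, a direct computation shows that $\tilde Y_i$ solves the same linear system with the same initial datum as $Y_i$. Evaluating at each $p$ turns this into a finite-dimensional linear ODE in $T_pM$, so uniqueness of solutions forces $\phi_t^*X_i=Y_i(t)=\tilde Y_i(t)\in\mathcal F$. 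Since $\phi_t^*(fY)=(f\circ\phi_t)\phi_t^*Y$, every element of $\mathcal F$ pulls back into $\mathcal F$, that is, $\phi_t^*\mathcal F\subseteq\mathcal F$.

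Finally, because the inclusion $\phi_s^*\mathcal F\subseteq\mathcal F$ holds for every $s$, applying it with $s=-t$ and then the inverse isomorphism $\phi_t^*=(\phi_{-t}^*)^{-1}$ yields the reverse inclusion $\mathcal F\subseteq\phi_t^*\mathcal F$, hence $\phi_t^*\mathcal F=\mathcal F$; since $\phi_t^*=(\phi_t^{-1})_*$, this is exactly $(\phi_t)_*\mathcal F=\mathcal F$, so $\phi_t$ is a symmetry of $\mathcal F$. The main obstacle is precisely the middle step: as $\mathcal F$ is in general not free, one cannot simply differentiate the coefficients of $Y_i$ in the generators, and the auxiliary matrix ODE together with pointwise uniqueness for linear ODEs is the device that circumvents this — local finite generation being exactly what makes the relevant system finite. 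The bookkeeping of the domains of definition when the flow is only locally defined is routine, since the whole argument is local.
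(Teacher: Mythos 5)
Your proof is correct, and it is essentially the classical argument of the cited references (Androulidakis--Skandalis): reduce to local generators $X_1,\dots,X_r$, encode $[X,X_i]=\sum_j a_{ij}X_j$ as a time-dependent linear system for $\phi_t^*X_i$, and produce the solution explicitly as $\sum_j M_{ij}(t)X_j$ via the auxiliary matrix ODE $\dot M=A(t)M$, so that pointwise uniqueness for linear ODEs forces $\phi_t^*X_i\in\mathcal F$. The paper, however, deliberately takes a different route: the proposition is only cited where stated, and the proof the paper actually supplies (Corollary \ref{1symmetry} together with Remark \ref{rmk:flot}) first lifts the infinitesimal symmetry $X$ to a degree-zero vector field $Z$ on a universal Lie $\infty$-algebroid $(E,Q)$ with $[Q,Z]=0$, then observes that the flow of $Z$ is an automorphism of the Lie $\infty$-algebroid intertwining the anchor with $(\varphi^X_t)_*$, which yields the invertible matrix $\mathfrak M^t_X$ over $\mathcal O(U)$ acting on the generators. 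Both arguments terminate at the same place — an invertible matrix of functions transforming a generating family of $\mathcal F$ — but yours constructs it by elementary ODE theory while the paper's derives it from the homotopy-theoretic machinery of Theorem \ref{main}; yours is self-contained and needs only local finite generation, whereas the paper's buys a conceptual reinterpretation (the flow upstairs on the $NQ$-manifold) at the cost of requiring the existence of a universal Lie $\infty$-algebroid. The only points to keep an eye on in your write-up are the routine ones you already flag: choosing $U$ and the time interval so that the coefficients $a_{ij}\circ\phi_t$ are defined, and the sign/variance conventions in $\phi_t^*=(\phi_t^{-1})_*$ when converting $\phi_t^*\mathcal F=\mathcal F$ into $(\phi_t)_*\mathcal F=\mathcal F$.
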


As we will see in Section \ref{sec:3}, one of the consequences of our future results is that any symmetry $X\in\mathfrak{s}(\mathcal F)$ of a singular foliation $\mathcal F$ admits a lift to a degree zero vector field on any universal $NQ$-manifold over $\mathcal{F}$ that commutes with the  homological vector field $Q$. This allows us to have an alternative proof and interpretation of Proposition \ref{prop:symm}.\\

\noindent
Let $(\mathfrak g, \lb_{\mathfrak g})$ be a Lie algebra over $\mathbb{K}=\mathbb{R}$ or $\mathbb{C}$, depending on the context.
\begin{definition}\label{def:sym}
A \emph{weak symmetry action of the Lie algebra $\mathfrak{g}$} on  a singular foliation $\mathcal F$ on $M$ is a $\mathbb K$-linear map $\varrho\colon\mathfrak{g}\longrightarrow \mathfrak X(M)$ that satisfies:\begin{itemize}
    \item $\forall\, x\in\mathfrak{g},\;[\varrho(x),\mathcal F]\subseteq \mathcal F$,
    \item  $\forall\, x,y\in\mathfrak{g},\;\varrho([x,y]_\mathfrak{g})-[\varrho(x),\varrho(y)]\in \mathcal F$.
\end{itemize} 
When $x\longmapsto \varrho(x)$ is a Lie algebra morphism, we speak of \emph{strict symmetry action} of $\mathfrak{g}$ on $\mathcal F$. There is an equivalence relation on the set of weak symmetry actions which is defined as follows: two weak symmetry actions, $\varrho, \varrho' \colon \mathfrak{g}\longrightarrow \mathfrak X(M)$ are said to be \emph{equivalent} if there exists a linear map $\varphi\colon\mathfrak{g}\longrightarrow \mathcal{F}$ such that $\varrho-\varrho'=\varphi$.
\end{definition}
\begin{remark}It is important to notice that when $\mathcal F$ is a regular foliation and $M/\mathcal{F}$ is a manifold, any weak symmetry action of a Lie algebra $\mathfrak g$ on $\mathcal{F}$ induces a strict action of $\mathfrak{g}$ over $M/\mathcal{F}$. Definition \ref{def:sym} is a way of extending this idea to all singular foliations.
\end{remark}
Here is a list of  examples.

\begin{example}\label{ex:pull-back}
Let $\pi\colon M\longrightarrow N$ be a submersion. Since any vector field on $N$ comes from a $\pi$-projectable vector  field on $M$, any Lie algebra morphism $\mathfrak{g}\longrightarrow \mathfrak{X}(N)$ can be lifted to a weak symmetry action $\mathfrak{g}\longrightarrow \mathfrak{X}(M)$  on the regular foliation $\Gamma(\ker\dd\pi)$, and any two such lifts are equivalent.

Furthermore, any weak action of a Lie algebra $\mathfrak{g}$ on a singular foliation $\mathcal{F}$ on $N$ can be lifted to a class of weak symmetry actions on the pull-back foliation $\pi^{-1}(\mathcal{F})$, (see Definition 1.9 in  \cite{AndroulidakisIakovos}). 
\end{example}
\begin{example}\label{ex:isotropy} Let $\mathcal{F}$ be a singular foliation on $M$. For any point $m\in M$, the set $\mathcal F(m) = \left\lbrace X \in\mathcal F \mid  X(m) = 0\right\rbrace$ is a Lie subalgebra of $\mathcal{F}$. Put  $\mathcal I_m = \left\lbrace f \in
C^\infty (M )\mid  f (m) = 0\right\rbrace$. The quotient space $\mathfrak{g}_m = \dfrac{\mathcal{F}(m)}{\mathcal I_m\mathcal F}$ is a Lie algebra, since $\mathcal I_m\mathcal F\subseteq \mathcal F(m)$ is a Lie ideal. The Lie algebra $\mathfrak{g}_m$ is called the \emph{isotropy Lie algebra of $\mathcal{F}$ at $m$} (see \cite{AndroulidakisIakovos2}). Let us denote, by $\lb_{\mathfrak{g}_m}$, its Lie bracket.\\

\begin{enumerate}
    \item Consider $\varrho\colon \mathfrak{g}_m\to \mathcal F(m)\subset\mathfrak{X}(M)$ a section of the projection map,
\begin{equation}\xymatrix{\mathcal I_m\mathcal F\ar@{^{(}->}[r]&\mathcal F(m)\ar@{->>}[r]&\mathfrak{g}_m\ar@/_/[l]_\varrho}\end{equation}Then, $[\varrho(x), \mathcal I_m\mathcal F]\subset\mathcal{I}_m\mathcal F$ and $\varrho([x,y]_{\mathfrak{g}_m})-[\varrho(x),\varrho(y)]\in\mathcal I_m\mathcal F$. Hence, the map $\varrho\colon \mathfrak{g}_m\to \mathfrak{X}(M)$ is a weak symmetry action of the singular foliation $\mathcal I_m\mathcal F$. A different section $\varrho'$ of  the projection map yields an equivalent weak symmetry action of $\mathfrak{g}_m$ on $\mathcal{I}_m\mathcal{F}$. An obstruction class for having a strict symmetry action equivalent to $\varrho$ will be given later in Section \ref{sec:5}.

\item In particular, for $k\geq 1$, let us denote by $\mathfrak{g}_m^k$ the isotropy Lie algebra of the singular foliation $\mathcal I^k_m\mathcal F$ at $m$. Every section $\varrho_k\colon\mathfrak{g}^k_m\longrightarrow \mathfrak{X}(M)$ of the projection map 

\begin{equation}\xymatrix{\mathcal I_m^{k+1}\mathcal F\ar@{^{(}->}[r]&\mathcal{I}^k_m\mathcal F\ar@{->>}[r]&\mathfrak{g}^k_m\ar@/_/[l]_{\varrho_k}}\end{equation}is a weak symmetry action of the Lie algebra $\mathfrak{g}^k_m$ on the singular foliation $\mathcal I_m^{k+1}\mathcal F$. 
\end{enumerate}

\end{example}
\begin{example} The following example comes from \cite{CLG-Ryv}, and follows the same patterns as in Examples \ref{ex:pull-back} and \ref{ex:isotropy}.
Let $(M,\mathcal F)$ be a singular foliation on a smooth manifold $M$ and $L\subset M$ a leaf. Let $[L,M]$ be a neighborhood of $L$ in $M$ equipped with some projection $\pi: M\longrightarrow L$. According to \cite{CLG-Ryv}, upon replacing $[L,M]$ by a smaller neighborhood of $L$ if necessary, there exists an Ehresmann connections (that is a vector sub-bundle $H \subset T[L,M]$ with $H\oplus\ker(\dd\pi) =T[L,M]$)
which satisfies that $\Gamma(H) \subset \mathcal F $. Such an Ehresmann connection is called an \emph{Ehresmann $\mathcal F$-connection} and induces a $C^\infty(L)$-linear section $\varrho_H\colon\mathfrak X(L)\longrightarrow \mathcal{F}^{\mathrm{proj}}$ of the surjection $\mathcal{F}^{ \mathrm{proj}} \longrightarrow \mathfrak X(L)$, where $\mathcal{F}^{ \mathrm{proj}}$ stands for  vector fields of $\mathcal{F}$ $\pi$-projectable on elements of $\mathfrak{X}(L)$. The section $\varrho_H$ is a weak symmetry action of $\mathfrak{X}(L)$ on the \emph{transverse} foliation $\mathcal{T}:=\Gamma(\ker\dd\pi)\cap~\mathcal F$.
When the Ehresmann connection $H$ is flat,   $\varrho_H $ is bracket-preserving, 
and defines a strict symmetry of $\mathfrak{X}(L)$ on the transverse foliation $\mathcal T $.
\end{example}

\begin{example}
Consider, for a fixed $k\in \mathbb{N}_0$, the singular foliation $\mathcal{F}_k:=\mathcal{I}_0^k\mathfrak{X}(\mathbb{R}^d)$ of all vector fields on $\mathbb{R}^d$ vanishing at order $k$ at the origin. The action of the Lie algebra $\mathfrak{gl}(\mathbb R^d)$ on $\mathbb{R}^d$ which is given by $$\mathfrak{gl}(\mathbb R)\longrightarrow \mathfrak{X}(\mathbb{R}^d),\,(a_{ij})_{1\leq i,j\leq d}\longmapsto\sum_{1\leq i, j\leq d}a_{ij}x_i\frac{\partial}{\partial x_j}$$ is a strict symmetry of $\mathfrak{g}$ on  $\mathcal F_k$.
\end{example}
\begin{example}Let $\varphi := (\varphi_1,\ldots,\varphi_r )$ be a $r$-tuple of homogeneous polynomial functions in $d$ variables over $\mathbb K$. Consider the singular foliation $\mathcal F_\varphi$ (see \cite{CLRL} Section 3.2.1) which is generated by all polynomial vector fields $X \in \mathfrak X(\mathbb K^d )$ that satisfy $X[\varphi_i ] = 0$ for all
$i \in\{1,\ldots, r\}$. The action $\mathbb K\rightarrow\mathfrak{X}(\mathbb{K}^d),\;\lambda\mapsto \lambda\overrightarrow{E}$, is a strict symmetry of $\mathbb{K}$ on $\mathcal{F}_\varphi$. Here, $\overrightarrow{E}$ stands for the Euler vector field.
\end{example}
\begin{example}\label{ex:affine-action}
Let $W$ be an affine variety realized as a subvariety of $\mathbb{C}^d$ and $\mathcal{I}_W\subset\mathbb{C}[x_1,\ldots,x_d]$ its vanishing ideal. Let us denote by $\mathfrak{X}(W):=\mathrm{Der}(\mathbb{C}[x_1,\ldots,x_d]/\mathcal{I}_W)$ the Lie algebra of vector fields on $W$. Let $\mathcal{F}_W:=\mathcal I_W\mathfrak{X}(\mathbb{C}^d)$ the singular foliation made of vector fields vanishing on $W$. Since every vector field on $W$ can be extended to a vector field on $\mathbb{C}^d$ tangent to $W$, every Lie algebra morphism $\varrho\colon\mathfrak{g}\longrightarrow \mathfrak{X}(W)$ extends to a linear map $\widetilde{\varrho}\colon\mathfrak{g}\longrightarrow \mathfrak{X}(\mathbb{C}^d)$ that makes this diagram commutes,
\begin{equation*}
    \xymatrix{&\mathfrak{X}(\mathbb{C}^d)\ar@{->>}[d] \\\mathfrak{g}\ar[ru]^{\widetilde{\varrho}}\ar[r]_{\varrho}& \mathfrak{X}(W)}
\end{equation*}
{For  $x,y \in \mathfrak g$, the extension $\widetilde{\varrho}$ satisfies: $\widetilde{\varrho}(x)[\mathcal I_W]\subset \mathcal I_W $, so that $[\widetilde{\varrho}(x), \mathcal I_W\mathfrak{X}(\mathbb{C}^d)]\subset \mathcal I_W\mathfrak{X}(\mathbb{C}^d)$. We have   $\widetilde\varrho([x,y]_\mathfrak{g})-[\widetilde{\varrho}(x),\widetilde{\varrho}(y)]\in \mathcal I_W\mathfrak{X}(\mathbb{C}^d )$ because $\widetilde\varrho$ is a Lie algebra morphism when restricted to $W$. Hence, $\widetilde\varrho$ is a weak symmetry action of $\mathfrak{g}$ on $\mathcal{F}_W$. Two different extensions give equivalent symmetry actions. Here is a natural question: Can we extend the Lie algebra action of $\mathfrak g $ on $W $ to a Lie algebra action on $\mathbb{C}^d$? This example shows that this question can be reformulated as: is any extension $\widetilde \varrho$ of $\varrho $ equivalent to a strict symmetry action? Corollary \ref{cor:affine} of Section \ref{sec:5} gives an obstruction class of extending this weak symmetry action to a strict one.}
\end{example}

\section{A Lie $\infty$-morphism lifting a weak symmetry of a foliation}\label{sec:3}
We refer the reader to Appendix \ref{sec:2} for the notion of universal Lie $\infty$-algebroids of a singular foliation and for notations. We denote them by $(E, Q)$ and their functions by $\E$. The triple $(\mathfrak{X}_\bullet(E), \lb, \text{ad}_Q)$ is a differential graded Lie algebra, where $\mathfrak{X}_\bullet(E)$ stands for the module of graded vector fields (=graded derivations of $\E$) on $E$, the bracket $\lb$ is the graded commutator of derivations and $\text{ad}_Q:=[Q,\cdot\,]$.

Also, see Appendix \ref{appendix-homotopies} for the notion of Lie $\infty$-morphism of differential graded Lie algebras and for notations.\\

{We now state the main theorem of the paper. In Appendix \ref{appendix-homotopies}, Proposition \ref{prop:induced-action} shows that a Lie $\infty$-morphism between a Lie algebra $\mathfrak{g}$ and the DGLA of graded vector fields of a Lie $\infty$-algebroid $(E,Q)$, induces a weak symmetry action of $\mathfrak{g}$ on the basic singular foliation $\mathcal{F}=\rho(\Gamma(E_{-1}))$ of $(E,Q)$. In this section, we show  that any weak symmetry action of a Lie algebra $\mathfrak{g}$ on a singular foliation $\mathcal{F}$ arises this way.}

\begin{convention}
{From now on and in the sequel, the Lie algebra $(\mathfrak{g}, \lb_\mathfrak{g})$ (possibly of infinite dimension) is concentrated in degree $0$ so that $\mathfrak{g}$ shifted by $1$, namely  $\mathfrak{g}[1]$, is concentrated  in degree $-1$. The Lie bracket $\lb_\mathfrak{g}\colon \mathfrak{g}[1]\times \mathfrak{g}[1]\longrightarrow \mathfrak{g}[1]$ of  $\mathfrak{g}[1]$ is of degree $+1$}.

\end{convention}
\begin{convention}
{In this paper, vector bundles are of finite rank. Lie $\infty$-algebroids are of finite rank except in Theorem \ref{alt-thm-res} we notice that the result holds true without this assumption.}
\end{convention}

\begin{definition}\label{def:lift}
Let $\mathcal{F}$ be a singular foliation on $M$ and $(E, Q)$ a Lie $\infty$-algebroid over $\mathcal F$. Consider a weak symmetry action $\varrho\colon\mathfrak{g}\longrightarrow\mathfrak{X}(M)$ of $\mathfrak g$ on  $\mathcal F$.
\begin{itemize}
    \item We say that a Lie $\infty$-morphism of differential graded Lie algebras \begin{equation}
        \Phi\colon (\mathfrak{g}[1],\lb_\mathfrak{g})\rightsquigarrow (\mathfrak X_\bullet(E)[1],\lb,\text{ad}_{Q})
    \end{equation} \emph{lifts the weak symmetry action $\varrho$ to $(E, Q)$} if for all $x\in\mathfrak{g}, f\in\mathcal{O}$,\, $\Phi_0(x)(f)=\varrho(x)[f]$.
    \item When $\Phi$ exists, we say then $\Phi$ is a \emph{lift} of $\varrho$ on $(E,Q)$.
\end{itemize}
\end{definition}

 We now state the main theorem of this paper.
\begin{theorem}\label{main}Let $\mathcal F$ a be a singular foliation on a smooth manifold (or an affine variety) $M$ and $\mathfrak{g}$ a Lie algebra. Let $\varrho\colon\mathfrak{g}\longrightarrow\mathfrak{X}(M)$ be a weak symmetry action  of $\mathfrak g$ on $\mathcal F$. The following assertions hold:
\begin{enumerate}
    \item for any universal Lie $\infty$-algebroid $(E,Q)$ of the singular foliation $\mathcal F$, there exists a Lie $\infty$-morphism $\Phi\colon(\mathfrak{g}[1],\lb_\mathfrak g)\rightsquigarrow \left(\mathfrak X_\bullet(E)[1],\lb, \emph{ad}_Q \right)$ that lifts $\varrho$ to $(E,Q)$,
    \item any two such Lie $\infty$-morphisms are homotopy equivalent over the identity of $M$,
    \item any two such lifts of any two  equivalent weak symmetry actions of $\mathfrak{g}$ on $\mathcal{F}$ are homotopy equivalent over the identity of $M$.
\end{enumerate}

\begin{remark}
Lie $\infty$-morphisms in item ${1}$ of Theorem \ref{main} are called $\mathfrak{g}$-actions on $(E,Q)$ in \cite{MEHTA2012576}.
\end{remark}

\begin{remark}
Item ${1}$ in Theorem \ref{main} implies that
\begin{enumerate}
    \item 

there exists a linear map $\Phi_0\colon\mathfrak{g}[1]\longrightarrow\mathfrak{X}_0(E)[1]$ such that \begin{equation}\label{eq:compatibility-with-Q}
    \Phi_0(x)[f]=\varrho(x)[f], \;\text{and}\,\; [Q,\Phi_0(x)]=0,\quad \forall x\in \mathfrak{g}[1], f\in \mathcal{O}.
\end{equation}
$\Phi_0$ is not a graded Lie algebra morphism, but there exist a linear map $\Phi_1\colon\wedge^2\mathfrak{g}[1]\longrightarrow \mathfrak{X}_{-1}(E)[1]$ such that for all $x,y, z\in \mathfrak{g}[1]$,\begin{equation*}
    \Phi_0([x,y]_{\mathfrak{g}})-[\Phi_0(x),\Phi_0(y)]=[Q,\Phi_1(x,y)].
\end{equation*}
Also, \begin{equation*}
    \Phi_1\left([x,y]_\mathfrak{g},z\right)-[\Phi_0(x),\Phi_1(y,z)]+\circlearrowleft(x,y,z)=[Q,\Phi_2(x,y,z)]
\end{equation*}for some linear map $\Phi_2\colon\wedge^3\mathfrak{g}[1]\longrightarrow \mathfrak{X}_{-2}(E)[1]$. These compatibility conditions continue to higher multilinear maps.  

\item
 For every element $x\in\mathfrak{g}$ and $i\geq 1$, there is a degree zero map $\nabla_x\in\mathrm{Der}(E)$ (i.e. $\nabla_x(fe)=~f\nabla_x(e) + \varrho(x)[f]e$, for $f\in \mathcal{O}, e\in \Gamma(E)$) depending linearly on $x$, such that  \begin{equation}\label{eq:dual-action}
     \left\langle \Phi_0(x)^{(0)}(\alpha),e\right\rangle=\varrho(x)[\langle\alpha,e\rangle]-\left\langle\alpha,\nabla_x(e)\right\rangle,\quad\text{for all $\alpha\in \Gamma(E^*),\; e\in \Gamma(E)$}
 \end{equation}
 where $\Phi_0(x)^{(0)}$ stands for the arity zero component of $\Phi_0(x)$. Therefore, by using Equations \eqref{eq:compatibility-with-Q}, \eqref{eq:dual-action} and the dual correspondence between Lie $\infty$-algebroids and
$NQ$-manifolds \cite{Poncin,LeanMadeleineJotz2020Maru,Voronov2}, we obtain these compatibility conditions:
 $$\ell_1\circ\nabla_x=   \nabla_x\circ \ell_1\quad\text{and}\quad  \rho\circ\nabla_x=\text{ad}_{\varrho(x)}\circ \rho,$$
$\ell_1$ stands for the corresponding unary bracket of $(E, Q)$. Also, for $X\in \mathfrak{X}(M)$, $\text{ad}_{X}:=[X,\,\cdot\,]$. In general, the map\;$\mathfrak g[1]\longrightarrow \mathrm{Der}(E),\,x\mapsto \nabla_x$ is not a Lie algebra morphism  even when the action $\varrho$ is strict. In fact, there exists a bilinear map $\gamma\colon \wedge^2\mathfrak g[1]\longrightarrow \mathrm{End}(E)[1]$ of degree ~$0$ that satisfies
\begin{equation}
    \nabla_{[x,y]_\mathfrak g}-[\nabla_x, \nabla_y]=\gamma(x,y)\circ\ell_1- \ell_1\circ \gamma(x,y)+ \ell_2(\eta(x,y),\cdot\,),
\end{equation}
here $\ell_2$ is the corresponding $2$-ary bracket of $(E,Q)$, and  $\eta\colon\wedge^2\mathfrak g\longrightarrow \Gamma(E_{-1})$ is such that $\varrho([x,y]_\mathfrak{g})-[\varrho(x),\varrho(y)]=\rho(\eta(x,y))$.
\end{enumerate}
\end{remark}

\end{theorem}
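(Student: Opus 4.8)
The plan is to construct the Taylor coefficients $\Phi_0,\Phi_1,\Phi_2,\dots$ of the Lie $\infty$-morphism one arity at a time, solving at each stage a single cohomological equation of the shape $R_n=[Q,\Phi_n]$ inside the DGLA $(\mathfrak{X}_\bullet(E),\lb,\text{ad}_Q)$. Recall that a Lie $\infty$-morphism $\Phi\colon(\mathfrak{g}[1],\lb_\mathfrak{g})\rightsquigarrow(\mathfrak{X}_\bullet(E)[1],\lb,\text{ad}_Q)$ is the same datum as a family of linear maps $\Phi_n\colon\wedge^{n+1}\mathfrak{g}\longrightarrow\mathfrak{X}_{-n}(E)$ subject to the relations displayed in the accompanying Remark, so the whole construction becomes an obstruction-theoretic induction: assuming $\Phi_0,\dots,\Phi_{n-1}$ built, the term $R_n$ assembled from them is forced to be $\text{ad}_Q$-closed by the lower relations together with the Jacobi identity of $\mathfrak{g}$, and one only has to produce an $\text{ad}_Q$-primitive $\Phi_n$. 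Everything therefore reduces to the acyclicity of $\text{ad}_Q$ on $\mathfrak{X}_\bullet(E)$ in the relevant (negative) degrees, which is precisely where I expect to invoke that $(E,Q)$ is a \emph{universal} Lie $\infty$-algebroid, i.e. that its linear part $(\Gamma(E),\ell_1,\rho)$ is a geometric resolution of $\mathcal{F}$.

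For the base case I would lift each $\varrho(x)$ to a degree-$0$ vector field $\Phi_0(x)\in\mathfrak{X}_0(E)$ covering $\varrho(x)$ on $\mathcal{O}$ and commuting with $Q$. Since $\varrho(x)$ is an infinitesimal symmetry, $\text{ad}_{\varrho(x)}$ preserves $\mathcal{F}=\rho(\Gamma(E_{-1}))$; covering the induced chain map on the resolution $(\Gamma(E),\ell_1)$ by a derivation $\nabla_x$ (the connection of the Remark) lets one promote $\nabla_x$ to a full $Q$-commuting vector field, again by solving $[Q,-]=\cdots$ arity by arity, and choosing these lifts linearly in $x$ gives $\Phi_0$. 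The first relation requires $\Phi_0([x,y]_\mathfrak{g})-[\Phi_0(x),\Phi_0(y)]$ to be $\text{ad}_Q$-exact; evaluating on functions and using the weak-symmetry hypothesis $\varrho([x,y]_\mathfrak{g})-[\varrho(x),\varrho(y)]=\rho(\eta(x,y))$ shows this difference agrees on $\mathcal{O}$ with $[Q,\iota_{\eta(x,y)}]$. Subtracting the latter leaves an $\text{ad}_Q$-closed degree-$0$ field that vanishes on base functions, hence is $\text{ad}_Q$-exact by acyclicity, and this yields $\Phi_1(x,y)=\iota_{\eta(x,y)}+\beta(x,y)$, matching the stated description of the arity-$-1$ component.

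The inductive step is the technical heart and the step I expect to be the main obstacle. Two things must be checked: that the obstruction $R_n$ is $\text{ad}_Q$-closed, and that it is $\text{ad}_Q$-exact. Closedness is a formal \emph{cocycle} computation, deduced from the previously solved relations. Exactness is the real work: I would equip $\mathfrak{X}_\bullet(E)$ with its arity filtration, observe that $R_n$ lands in $\mathfrak{X}_{-(n-1)}(E)$ so that a primitive is sought in $\mathfrak{X}_{-n}(E)$, and prove that $\text{ad}_Q$-cohomology vanishes there. This vanishing should follow, via an order-by-arity (spectral-sequence) argument, from the exactness of the geometric resolution underlying $(E,Q)$: the horizontal differential built from $\ell_1$ and $\rho$ is acyclic except where it computes $\mathcal{F}$, and this kills the higher obstruction classes. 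This is exactly the property that characterises \emph{universal} Lie $\infty$-algebroids and is unavailable for a non-universal resolution, which is why the statement is phrased for universal ones.

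Finally, items (2) and (3) follow from the same machine applied to homotopies rather than morphisms. Given two lifts $\Phi,\Phi'$ of $\varrho$, their zeroth coefficients both cover $\varrho$ and commute with $Q$, so $\Phi_0-\Phi_0'$ vanishes on base functions and is $\text{ad}_Q$-exact; this provides the degree-$(-1)$ start of a Lie $\infty$-homotopy over $\text{id}_M$, propagated to all arities by the identical vanishing-cohomology recursion. For (3), two equivalent weak symmetry actions differ by some $\varphi\colon\mathfrak{g}\longrightarrow\mathcal{F}$, say $\varphi(x)=\rho(c(x))$ with $c(x)\in\Gamma(E_{-1})$; adjusting by the exact terms $[Q,\iota_{c(x)}]$ reduces the comparison of their lifts to the case already treated in (2), yielding a homotopy over the identity of $M$.
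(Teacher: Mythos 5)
Your proposal follows essentially the same route as the paper: an arity-by-arity induction in which the obstruction at each stage is $\mathrm{ad}_Q$-closed by a formal computation and $\mathrm{ad}_Q$-exact by the acyclicity of the subcomplex of longitudinal (in particular vertical, negative-degree) vector fields, which the paper establishes by a diagram chase using exactness of the geometric resolution underlying the universal Lie $\infty$-algebroid (Theorem \ref{thm:longitudinal}, Corollary \ref{cor:longitudinal}, Lemma \ref{lem:length}); items (2) and (3) are likewise handled by the same recursion applied to homotopies, with the equivalence of actions absorbed by the exact term $[Q,\iota_{\varphi(x)}]$. The only differences are presentational (you phrase the acyclicity as a spectral-sequence argument and build $\Phi_0$ via the connection $\nabla_x$ rather than by correcting an arbitrary extension with a vertical field), so the proposal is a faithful, if less detailed, version of the paper's proof.
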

\begin{corollary}\label{1symmetry}
{Let $(E,Q)$ be a universal Lie $\infty$-algebroid of a singular foliation $\mathcal F$. For any symmetry $X\in\mathfrak{X}(M)$ of  $\mathcal F$, there exists a degree zero vector field $Z\in\mathfrak X_0(E)$\begin{enumerate}
\item that commutes with $Q$, i.e., such that $[Z,Q]=0$,
\item and that extends $X$ in the sense that the following diagrams commute \begin{equation}\label{eq:1symmetry}
\xymatrix{C^\infty(M)\ar[d]_X\ar[r]^{p^*}&\Gamma\left(S^\bullet( E^*)\right)\ar[d]^Z\\C^\infty(M)\ar[r]^{p^*}&\Gamma\left(S^\bullet (E^*)\right)}\qquad\text{and}\qquad\xymatrix{\Gamma\left(S^\bullet( E^*)\right)\ar[d]_Z\ar[r]^{\iota^*}&C^\infty(M)\ar[d]^X\\\Gamma\left(S^\bullet (E^*)\right)\ar[r]^{\iota^*}&C^\infty(M)}
\end{equation} 
where $p\colon E\longrightarrow M$ is the projection map and $\iota \colon M\hookrightarrow E$  the zero section.
\end{enumerate}}
\end{corollary}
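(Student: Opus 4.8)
The plan is to deduce Corollary \ref{1symmetry} directly from Theorem \ref{main} by applying it to the one-dimensional abelian Lie algebra $\mathfrak{g}=\mathbb{K}\cdot e$. Given a symmetry $X\in\mathfrak{s}(\mathcal F)$, i.e. $[X,\mathcal F]\subset\mathcal F$, I would first observe that the linear map $\varrho\colon\mathfrak{g}\longrightarrow\mathfrak{X}(M)$ defined by $\varrho(e)=X$ is automatically a \emph{weak} symmetry action: the first condition of Definition \ref{def:sym} is exactly the hypothesis $[X,\mathcal F]\subseteq\mathcal F$, and the second condition, $\varrho([e,e]_\mathfrak{g})-[\varrho(e),\varrho(e)]\in\mathcal F$, holds trivially since $[e,e]_\mathfrak{g}=0$ and $[X,X]=0$ give $0\in\mathcal F$. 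Thus $\varrho$ is in fact a \emph{strict} symmetry action of the abelian $\mathfrak{g}$ on $\mathcal F$.

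Next I would invoke item $(1)$ of Theorem \ref{main} to obtain a Lie $\infty$-morphism $\Phi\colon(\mathfrak{g}[1],\lb_\mathfrak{g})\rightsquigarrow(\mathfrak X_\bullet(E)[1],\lb,\mathrm{ad}_Q)$ lifting $\varrho$, and set $Z:=\Phi_0(e)\in\mathfrak{X}_0(E)$. By Equation \eqref{eq:compatibility-with-Q} in the first remark of Theorem \ref{main}, the degree zero vector field $Z$ satisfies $[Q,Z]=0$, which is item $(1)$ of the corollary, and it also satisfies $Z(f)=\Phi_0(e)(f)=\varrho(e)[f]=X[f]$ on all $f\in\mathcal{O}$ coming from the base. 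The content of item $(2)$ is precisely the statement that, under the identification of functions on the $NQ$-manifold with $\Gamma(S^\bullet(E^*))$, the pullback $p^*$ along the projection $p\colon E\to M$ and the restriction $\iota^*$ along the zero section $\iota\colon M\hookrightarrow E$ intertwine $X$ and $Z$.

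The only genuine step beyond quoting Theorem \ref{main} is to translate the condition $\Phi_0(e)(f)=X[f]$ on base functions into the commutativity of the two diagrams in \eqref{eq:1symmetry}. For the left diagram, I would note that $p^*\colon C^\infty(M)\hookrightarrow\Gamma(S^\bullet(E^*))$ identifies $C^\infty(M)$ with the degree zero (arity zero) part of the functions on $E$, and that the defining property $Z(p^*f)=p^*(X[f])$ is exactly the statement that $Z$ acts on this subalgebra as $X$; this is the meaning of $\Phi_0(e)(f)=\varrho(e)[f]$. For the right diagram, $\iota^*$ is the algebra projection $\Gamma(S^\bullet(E^*))\twoheadrightarrow C^\infty(M)$ onto arity zero, and since $Z$ has degree zero and preserves the arity filtration in the appropriate way, the compatibility $\iota^*\circ Z=X\circ\iota^*$ follows from the same base-level identity together with the fact that $Z$ is a derivation; I would spell this out by testing on a product of a base function and sections of $E^*$.

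The main obstacle, such as it is, is purely bookkeeping: one must be careful that $\Phi_0(e)$ is a single degree zero vector field rather than a collection of arity-graded pieces, and that the two properties $Z(p^*f)=p^*(X[f])$ and $[Q,Z]=0$ are exactly what Definition \ref{def:lift} and Equation \eqref{eq:compatibility-with-Q} deliver. I do not expect any hard analysis or new cohomological input here; the corollary is essentially the specialization of Theorem \ref{main} to an abelian one-dimensional Lie algebra, repackaged geometrically as an extension of $X$ to the $NQ$-manifold commuting with the homological vector field $Q$. This also recovers, via Proposition \ref{prop:symm}, the interpretation announced in the remark following that proposition.
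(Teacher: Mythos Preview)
Your proposal is correct and follows exactly the same approach as the paper: apply Theorem \ref{main} to the one-dimensional abelian Lie algebra and take $Z:=\Phi_0(e)$. The paper's own proof is the one-liner ``apply Theorem \ref{main} for $\mathfrak{g}=\mathbb{R}$ and take $Z$ to be the image of $1$ through $\Phi_0$''; your additional unpacking of why the two diagrams in \eqref{eq:1symmetry} commute (the left from $Z(p^*f)=p^*(X[f])$, the right from $Z$ being degree zero so that it preserves the split $C^\infty(M)\oplus\Gamma(S^{\geq 1}(E^*))$) is accurate, though note that what makes the right diagram work is preservation of \emph{degree} rather than arity.
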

\begin{remark}
{Geometrically, Equation \eqref{eq:1symmetry} means that $p_*(Z)=X$ and $Z|_M=X$.}
\end{remark}
\begin{proof}
To construct $Z$, it suffices to apply Theorem \ref{main} for $\mathfrak{g}=\mathbb{R}$ and take $Z$ to be the image of $1$ through $\Phi_0\colon \mathbb R[1]\longrightarrow \mathfrak{X}_0(E)[1]$. 
\end{proof}

\begin{remark}
\label{rmk:flot}
 In particular, Corollary \ref{1symmetry} has the following consequences: \begin{enumerate}
    \item for any admissible $t$, the flow $\Phi^Z_t\colon \E \longrightarrow \E$ of $Z$ being an isomorphism of Lie $\infty$-algebroids, it induces an isomorphism of vector bundles $E_{-1}\longrightarrow E_{-1}$. Since $[Q,Z]=0$, the following diagram commutes,$$\xymatrix{\Gamma(E_{-1})\ar[d]_{\rho}\ar[r]^{(\Phi^Z_t)^{(0)}}&\Gamma(E_{-1})\ar[d]^\rho\\\mathfrak{X}(M)\ar[r]_{(\varphi^X_t)_*}&\mathfrak{X}(M)}$$where $\phi^X_t$ is the flow of $X$ at $t$.
    \item Consequently, for any open set $U\subset M$ which is stable under $\varphi^X_t$, there exists an invertible matrix $\mathfrak{M}^t_X$ with coefficients in $\mathcal O(U)$ that satisfies $$\left(\phi^X_t\right)_*\begin{pmatrix}X_1\\\vdots\\X_n\end{pmatrix}=\mathfrak{M}^t_X\begin{pmatrix}X_1\\\vdots\\X_n\end{pmatrix},$$
   for some generators $X_1,\ldots,X_n$ of $\mathcal{F}$ over $U$. As announced earlier, we recover Proposition \ref{prop:symm}, that is, $\left(\phi^X_t\right)_*(\mathcal{F})=\mathcal{F}$.
   
\end{enumerate} 
\end{remark}

 Let $(E,Q)$ and $(E',Q')$ be two universal Lie $\infty$-algebroids of $\mathcal{F}$. A direct consequence of Ricardo Campos's Theorem 4.1 in \cite{CamposRicardo} is that the differential graded Lie algebras $\left( \mathfrak X_\bullet(E)[1],\lb, \mathrm{ad}_Q \right)$ and $\left( \mathfrak X_\bullet(E')[1],\lb, \mathrm{ad}_{Q'}\right)$ are homotopy equivalent over the identity of $M$. This leads to the following statement.
\begin{corollary}
 Let $\varrho\colon\mathfrak{g}\longrightarrow\mathfrak{X}(M)$ be a weak symmetry action of a Lie algebra $\mathfrak{g}$ on $\mathcal F$. Then,   there exist Lie $\infty$-morphisms, $\Phi\colon\mathfrak{g}[1]\rightsquigarrow \left( \mathfrak X_\bullet(E)[1],\lb, \emph{ad}_Q \right)$ and $\Psi\colon\mathfrak{g}[1]\rightsquigarrow \left(\mathfrak X_\bullet(E')[1],\lb, \emph{ad}_{Q'} \right)$ that lift $\varrho$, and $\Phi,\Psi$ make the following diagram commute up to homotopy

\begin{equation}\label{diagram:Campos}\xymatrix{ &\mathfrak{g}[1]\ar@{~>}[dl]_\Phi\ar@{~>}[dr]^\Psi& \\\left( \mathfrak X_\bullet(E)[1],\lb, \emph{ad}_Q \right)\ar@{<~>}[rr]^\sim & &\left( \mathfrak X_\bullet(E')[1],\lb, \emph{ad}_{Q'} \right).}\end{equation}

\end{corollary}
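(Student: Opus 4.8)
The plan is to combine the existence-and-uniqueness statement of Theorem~\ref{main} with the homotopy equivalence of the two DGLAs of vector fields furnished by Campos's theorem, via a standard transport-of-structure argument. First I would apply item~(1) of Theorem~\ref{main} separately to each of the two universal Lie $\infty$-algebroids. This produces a Lie $\infty$-morphism $\Phi\colon \mathfrak{g}[1]\rightsquigarrow (\mathfrak X_\bullet(E)[1],\lb,\mathrm{ad}_Q)$ lifting $\varrho$ to $(E,Q)$, and a Lie $\infty$-morphism $\Psi\colon \mathfrak{g}[1]\rightsquigarrow (\mathfrak X_\bullet(E')[1],\lb,\mathrm{ad}_{Q'})$ lifting $\varrho$ to $(E',Q')$. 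Next, from Campos's Theorem~4.1 in \cite{CamposRicardo} I would fix a Lie $\infty$-morphism $\chi\colon (\mathfrak X_\bullet(E)[1],\lb,\mathrm{ad}_Q)\rightsquigarrow (\mathfrak X_\bullet(E')[1],\lb,\mathrm{ad}_{Q'})$ realizing the homotopy equivalence over $\mathrm{id}_M$ (a quasi-isomorphism of $L_\infty$-algebras, hence invertible up to homotopy, so that one may take $\chi$ to be a genuine direct morphism).

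The core of the argument is then to form the composite $\chi\circ\Phi\colon\mathfrak{g}[1]\rightsquigarrow(\mathfrak X_\bullet(E')[1],\lb,\mathrm{ad}_{Q'})$, which is again a Lie $\infty$-morphism, and to check that it is a lift of $\varrho$ in the sense of Definition~\ref{def:lift}. The first Taylor coefficient of a composite is the composite of first Taylor coefficients, so $(\chi\circ\Phi)_0 = \chi_0\circ\Phi_0$, where $\chi_0$ is the linear part of $\chi$, a chain map of the two DGLAs. The key point—and the step I expect to be the main obstacle—is to extract from ``homotopy equivalence over the identity of $M$'' the precise fact that $\chi_0$ carries a degree-zero vector field on $E$ to a degree-zero vector field on $E'$ inducing the \emph{same} vector field on the base $M$. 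Granting this compatibility with the base map, $(\chi\circ\Phi)_0(x)$ acts on $f\in\mathcal O$ exactly as $\Phi_0(x)$ does, namely by $\varrho(x)[f]$, so that $\chi\circ\Phi$ indeed lifts $\varrho$ to $(E',Q')$.

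Finally I would invoke item~(2) of Theorem~\ref{main}: since both $\Psi$ and $\chi\circ\Phi$ are lifts of the same weak symmetry action $\varrho$ to the single algebroid $(E',Q')$, they are homotopy equivalent over $\mathrm{id}_M$. This homotopy $\chi\circ\Phi\simeq\Psi$ is precisely the assertion that the triangle \eqref{diagram:Campos} commutes up to homotopy, which completes the proof. The only genuinely delicate points are the compatibility of $\chi$ with the base map discussed above, and the bookkeeping needed to verify that composition of Lie $\infty$-morphisms and the relation ``homotopy equivalent over $\mathrm{id}_M$'' interact correctly (in particular that the latter is compatible with postcomposition); both are formal once the conventions of Appendix~\ref{appendix-homotopies} are in place.
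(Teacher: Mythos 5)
Your proposal follows exactly the paper's argument: lift $\varrho$ to both $(E,Q)$ and $(E',Q')$ via item (1) of Theorem \ref{main}, compose $\Phi$ with the Campos equivalence, observe that the composite is still a lift of $\varrho$ (which uses that the equivalence is over the identity of $M$), and conclude by item (2) of Theorem \ref{main}. The paper's proof is a two-line version of the same reasoning, so your write-up is correct and, if anything, more explicit about the base-compatibility point.
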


\begin{proof}
The composition of $\Phi$ with the horizontal map in the diagram \eqref{diagram:Campos} is a lift of the action ~$\varrho$. It is necessarily homotopy equivalent to $\Psi$ by item ${2}$ in Theorem \ref{main}.
\end{proof}

\subsection{Cohomology of longitudinal graded vector fields}In this section, we study the cohomology of longitudinal vector fields, which will help in proving the main results stated in the beginning of Section \ref{sec:3}.\\

Let $\mathcal{F}$ be a singular foliation on $M$.
\begin{definition}
Let $E$ be a splitted graded manifold over $M$ with sheaf of function $\E=~\Gamma(S(E^*))$. A vector field $L\in\mathfrak{X}(E)$ is said to be a \emph{longitudinal vector field for $\mathcal{F}$} if there exists vector fields $X_1,\ldots,X_k\in \mathcal{F}$ and functions $\Theta_1,\ldots,\Theta_k\in\E$ such that, \begin{equation}
  L(f)=\sum_{i=1}^kX_i[f]\Theta_i,\qquad \forall f\in\mathcal{O}.
\end{equation}
\end{definition}
\begin{example}\label{longi:examples}Here are some examples.
\begin{enumerate}
\item Vertical\footnote{We say a vector field on $E$ is \emph{vertical} if it is $\mathcal{O}$-linear.} vector fields are longitudinal.
    \item For any $Q$-manifold $(E,Q)$ over a manifold $M$. The homological vector field $Q\in\mathfrak{X}(E)$ is a longitudinal vector field for its basic singular foliation $\mathcal{F}:=\rho(\Gamma(E_{-1}))$.
    \item {Longitudinal vector fields are precisely of the form ${\sum_{i=1}^k\Theta_iX_i+ V},$ for $X_1,\ldots,X_k\in \mathcal{F}$,\,$\Theta_1,\ldots,\Theta_k\in\E$ and $V\in \mathfrak{X}(E)$ a vertical vector field on $E$.}
    \item For $(E, Q)$ a $Q$-manifold and $\mathcal{F}:=\rho(\Gamma(E_{-1}))$ its basic singular foliation. For any extension of a symmetry $X\in\mathfrak{s}(\mathcal{F})$ of $\mathcal{F}$ to a degree zero vector field $\widehat{X}\in\mathfrak{X}(E)$, the degree $+1$ vector field $[Q,\widehat{X}]$ is longitudinal for $\mathcal{F}$.
    \item[] Let us show this last point using local coordinates $(x_1, \dots,x_n) $ on $M$ and a local trivialization $\xi^1,\xi^2,\ldots$ of graded sections in $\Gamma(E^*)$. The vector fields $Q $ and $\widehat{X} $ take the form:
    \begin{equation}\label{eq:sym_long}
    \begin{array}{rcl}
    Q&=&\displaystyle{\sum_j\sum_{k,\, |\xi^k|=1}Q^j_k(x) \xi^k\frac{\partial}{\partial x_j} + \sum_{j}\sum_{k,\iota_1,\ldots,\iota_k}\frac{1}{k!}Q^j_{\iota_1,\ldots,\iota_k}(x) \xi^1\odot\cdots\odot\xi^k\frac{\partial}{\partial \xi^j}} \\
    \widehat{X}&= &\displaystyle{X+ \sum_{j}\sum_{k,\iota_1,\ldots,\iota_k}\frac{1}{k!}X^j_{\iota_1,\ldots,\iota_k}(x)\xi^1\odot\cdots\odot\xi^k\frac{\partial}{\partial \xi^j}} \end{array}
    \end{equation}
    where $\displaystyle{X=\sum_{i=1}^n X_i(x) \frac{\partial}{\partial x_i}}$. By using Equation \eqref{eq:sym_long} we note that all the terms of $[Q, \widehat{X}]$ are vertical except maybe for the ones where the vector field $X$ appears. For $k\geq 1$, the vector field   $[Q^j_{\iota_1,\ldots,\iota_k}\xi^1\odot\cdots\odot\xi^k\frac{\partial}{\partial \xi^j}, X]$ is vertical; and for every fixed $k$, one has  \begin{align*}
        \left[\sum_{j=1}^n Q^j_k\xi^k\frac{\partial}{\partial x_j}, X\right]=\xi^k\left[\sum_{j=1}^n Q^j_k\frac{\partial}{\partial x_j}, X\right].
    \end{align*}
    
    Now, $\displaystyle{\left[\sum_{j=1}^n Q^j_k\frac{\partial}{\partial x_j}, X\right]\in\mathcal{F}}$, since $X$ is a symmetry for $\mathcal{F}$ and $\displaystyle{\sum_{j=1}^n Q^j_k\frac{\partial}{\partial x_j}\in\mathcal{F}}$. 
\end{enumerate}
\end{example}

\begin{remark}\label{longitudinal-stable}
Longitudinal vector fields are stable under the graded Lie bracket.
\end{remark}

Let us make two points on vector fields on $E$. 
\begin{enumerate}
\item Sections of the graded vector bundle $E$ are identified with derivations under the isomorphism mapping  $e\in\Gamma(E)\longmapsto \iota_e \in\mathfrak{X}(E)$. This allows us to identify a vertical vector field with (maybe infinite) sums of  tensor products of the form  $\Theta\otimes e$ with $\Theta\in \E, e\in \Gamma(E)$. 

\item {A $TM$-connection $\nabla$ on the graded bundle $E$, i.e., a collection of $TM$-connections $\nabla^i$ on $E_{-i}$ for $i\geq 1$, induces for $X\in \mathfrak{X}(M)$ a vector field of degree zero $\widetilde{\nabla}_X\in \mathfrak{X}(E)$ by setting for $f\in\mathcal{O}$, $\widetilde{\nabla}_X(f):= X[f]$ and $\widetilde{\nabla}_X(\xi):=\nabla^{i,*}_X(\xi)$ for every homogeneous element $\xi\in \Gamma(E_{-i}^*)$, where $\nabla^{i,*}_X$ is the dual $TM$-connection. Upon choosing a $TM$-connection on $E$ as above, we give a $\mathbb{N}_0\times \mathbb{Z}_{-}$ grading to vector fields on $E$ by the identification below:} \begin{align}\label{eq:identification}
    \mathfrak{X}_k(E)&\simeq\bigoplus_{j\geq 1}\E_{k+j}\otimes_\mathcal{O}\Gamma(E_{-j})\,\oplus\,\E_{k}\otimes_\mathcal{O}\mathfrak{X}(M)\\&\nonumber\simeq \oplus_{j\geq 1}\Gamma(S(E^*)_{k+j}\otimes E_{-j} )\,\oplus\,\Gamma(S(E^*)_k\otimes TM)
\end{align}
for all $k\in\mathbb{Z}$.
Therefore, one can realize a vector field $P\in\mathfrak{X}_k(E)$ as a sequence $P=(p_0,p_1,\ldots)$, where $p_0\in \Gamma(S(E^*)_k\otimes TM)$ and $p_i\in\Gamma(S(E^*)_{k+i}\otimes E_{-i})$ for  $i\geq 1$ are called \emph{components} of $P$. In the diagram \eqref{longitudinal:complex}, $P=(p_0,p_1,\ldots)$ is represented as an element of the anti-diagonal and $p_i$ is on column $i$. We say that $P$ is of \emph{depth} $n\in \mathbb{N}$ if $p_i=0$ for all $i< n $. In particular, vector fields of depth greater or equal to $1$ are vertical. Under the {isomorphism} \eqref{eq:identification}, the differential map $\mathrm{ad}_Q$ takes the form\begin{equation}
    D=D^h+\sum_{s\geq 0}D^{v_s}
\end{equation}with $D^2=0$. Here, $D^h=\mathrm{id}\otimes\dd\,\text{\,or\,}\,\mathrm{id}\otimes\rho$, and $$D^{v_s}\colon \Gamma(S(E^*)_k\otimes E_{-i})\to \Gamma(S(E^*)_{k+s+1}\otimes~E_{-i-s})$$ for $i\geq 0,\,s\geq 0$, where $E_{0}:=TM$. We denote the latter complex by $(\mathfrak{L}, D)$.  {The maps $D^{v_s}$, for $s\geq 1$, are represented as up-left-pointing arrows, and  $D^{v_0}$ by vertical arrows, in the following diagram, whose lines are complexes of $\mathcal{O}$-modules given by the differential map $D^h$:}
\begin{equation}\label{longitudinal:complex}
\xymatrix{& \vdots &&\vdots && \vdots\\\cdots\ar[r] &\Gamma(S(E^*)_{k+2}\otimes E_{-2})\ar[u]_{D^{v_0}} \ar^{D^h=\mathrm{id}\otimes\dd}[rr]&&\Gamma(S(E^*)_{k+2}\otimes E_{-1})\ar[u]_{D^{v_0}} \ar^{D^h=\mathrm{id}\otimes\rho}[rr]&& \Gamma(S(E^*)_{k+2}\otimes TM)\ar[u]_{D^{v_0}}\\\cdots\ar[r] & \Gamma(S(E^*)_{k+1}\otimes E_{-2})\ar[luu]^>>>>>{D^{v_1}}\ar[u]^\ddots_{D^{v_0}} \ar^{D^h=\mathrm{id}\otimes\dd}[rr]&&\Gamma(S(E^*)_{k+1}\otimes E_{-1})\ar[luu]_>>>{D^{v_2}}^>>>>{D^{v_1}}\ar[u]^\ddots_{D^{v_0}} \ar^{D^h=\mathrm{id}\otimes\rho}[rr]&& \Gamma(S(E^*)_{k+1}\otimes TM)\ar[lluu]_>>>{D^{v_1}}\ar[u]^\ddots_{D^{v_0}}\\\cdots \ar[r]&\Gamma(S(E^*)_k\otimes E_{-2})\ar[luu]\ar[u]^\ddots_{D^{v_0}}\ar^{D^h=\mathrm{id}\otimes\dd}[rr]&&\Gamma(S(E^*)_k\otimes E_{-1})\ar@/^/[llluuu]_>>>>>>>{D^{v_2}}\ar[lluu]_>>>>>>>{D^{v_1}}\ar[u]^\ddots_{D^{v_0}} \ar^{D^h=\mathrm{id}\otimes\rho}[rr]&&{\Gamma(S(E^*)_k\otimes TM)}\ar@/^/[llluuu]\ar[lluu]_>>>>>>>>{D^{v_1}}\ar[u]^\ddots_{D^{v_0}}\\ & \vdots\ar[u]_{D^{v_0}} &&\vdots\ar[u]_{D^{v_0}} && \vdots\ar[u]_{D^{v_0}}\\& \texttt{column $2$}&&\texttt{column $1$} && \texttt{column $0$}\\}
\end{equation}
\end{enumerate}

\begin{remark}
{For $j\geq 0$,  $\Theta \in \E$ and $\xi\in \Gamma(E_{-j})$ one has  $D^{v_0}(\Theta\otimes \xi)=Q(\Theta)\otimes \xi+(-1)^{|\Theta|}\Theta\odot D^{v_0}(1\otimes \xi)$ and $D^{v_i}(\Theta\otimes \xi)+(-1)^{|\Theta|}D^{v_i}(1\otimes \xi)$ for every $i\geq 1$. Here, $E_{0}:=TM$.}
\end{remark}
Under this correspondence, we understand longitudinal vector fields as the following. 
\begin{lemma}
A graded vector field $P=(p_0,p_1,\ldots,)\in\mathfrak{L}$ is longitudinal if $p_0\in\E\otimes_\mathcal{O}\mathcal{F}$.
\end{lemma}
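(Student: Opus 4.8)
The plan is to read off the action of $P$ on pullbacks of functions on $M$ directly from its components and to recognise in it the defining form of a longitudinal vector field. The central observation is that, under the identification \eqref{eq:identification}, the zeroth component $p_0$ is nothing but the restriction of $P$ to $\mathcal{O}$. Indeed, the higher components $p_i\in\Gamma(S(E^*)_{k+i}\otimes E_{-i})$ for $i\geq 1$ correspond, via $e\mapsto\iota_e$, to vertical (i.e. $\mathcal{O}$-linear) vector fields, and an $\mathcal{O}$-linear derivation $V$ satisfies $V(f)=V(f\cdot 1)=fV(1)=0$ for every $f\in\mathcal{O}$; so the part $(0,p_1,p_2,\ldots)$ of $P$ annihilates every pullback from $M$. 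Consequently $P(f)=p_0(f)$ for all $f\in\mathcal{O}$, where $p_0\in\E\otimes_\mathcal{O}\mathfrak{X}(M)$ acts on $\mathcal{O}$ through the pairing $\mathfrak{X}(M)=\mathrm{Der}(\mathcal{O})$. This description of $p_0=P|_\mathcal{O}$ is moreover independent of the auxiliary $TM$-connection used to set up \eqref{eq:identification}, since $P|_\mathcal{O}\in\mathrm{Der}(\mathcal{O},\E)\cong\E\otimes_\mathcal{O}\mathfrak{X}(M)$ is canonically attached to $P$; changing the connection only alters the horizontal lift of $p_0$ by a vertical field and reshuffles the $p_i$ with $i\geq 1$.

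With this in hand the conclusion is immediate. Assuming $p_0\in\E\otimes_\mathcal{O}\mathcal{F}$, I would write it as a finite sum of simple tensors $p_0=\sum_{a=1}^k\Theta_a\otimes X_a$ with $\Theta_a\in\E$ and $X_a\in\mathcal{F}$, which is exactly what membership in $\E\otimes_\mathcal{O}\mathcal{F}$ means. Applying the previous paragraph gives, for every $f\in\mathcal{O}$,
\begin{equation*}
P(f)=p_0(f)=\sum_{a=1}^k\Theta_a\,X_a[f]=\sum_{a=1}^k X_a[f]\,\Theta_a ,
\end{equation*}
which is precisely the defining condition for $P$ to be longitudinal for $\mathcal{F}$, with the $X_a\in\mathcal{F}$ and $\Theta_a\in\E$ produced above.

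There is no serious obstacle here; the only points requiring care are the two bookkeeping facts used above — that the components $p_i$ with $i\geq 1$ are genuinely vertical and hence kill pullbacks from $M$ (this is the statement that vector fields of depth $\geq 1$ are vertical, noted after \eqref{eq:identification}), and that $p_0$ coincides with the connection-independent restriction $P|_\mathcal{O}$ under the identification of vertical fields with sums of tensors $\Theta\otimes e$, $e\in\Gamma(E)$. I would also remark that the converse holds by the very same computation: if $P$ is longitudinal, then $P(f)=\sum_a X_a[f]\,\Theta_a$ with $X_a\in\mathcal{F}$ forces $p_0=P|_\mathcal{O}=\sum_a\Theta_a\otimes X_a\in\E\otimes_\mathcal{O}\mathcal{F}$, so the stated sufficient condition is in fact an equivalence.
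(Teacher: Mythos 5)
Your argument is correct and is exactly the intended one: the paper states this lemma without proof, as an immediate consequence of the identification \eqref{eq:identification} (the components $p_i$, $i\geq 1$, are vertical and hence kill pullbacks from $M$, so $P(f)=p_0(f)$, and $p_0\in\E\otimes_\mathcal{O}\mathcal{F}$ then yields the defining form $P(f)=\sum_a X_a[f]\,\Theta_a$ of a longitudinal vector field). Your added remarks on connection-independence of $P|_{\mathcal O}$ and on the converse are accurate and consistent with item (3) of Example \ref{longi:examples}.
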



The following theorem is crucial for the rest of this paper.
\begin{theorem}\label{thm:longitudinal}
Let $(E,Q)$ be a universal $Q$-manifold of $\mathcal{F}$. 
\begin{enumerate}
    \item Longitudinal vector fields form an acyclic complex.
    
     More precisely, any longitudinal vector field on $E$ which is an $\mathrm{ad}_Q$-cocycle is the image through $\mathrm{ad}_Q$ of some vertical vector field on $E$.
     
     \item More generally, if a vector field on $E$ of depth $n$ is an $\mathrm{ad}_Q$-cocycle, then it is the image through $\mathrm{ad}_Q$ of some  vector field on $E$ of depth $n+1$. 
\end{enumerate}
\end{theorem}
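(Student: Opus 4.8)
The plan is to exploit the bigrading of $\mathfrak{X}_\bullet(E)$ recorded in \eqref{eq:identification} together with the decomposition $\mathrm{ad}_Q=D=D^h+\sum_{s\geq 0}D^{v_s}$ displayed in the diagram \eqref{longitudinal:complex}, and to prove both items by one and the same procedure: lifting a cocycle column by column along the \emph{horizontal} differential $D^h$. Writing a vector field as a sequence $P=(p_0,p_1,\ldots)$ with $p_i$ in column $i$, I regard a vector field of depth $n$ as being supported on columns $\geq n$. The mechanism rests on the elementary observation that $D^h$ is the only part of $D$ that lowers the column index, whereas $D^{v_0}$ preserves it and the $D^{v_s}$ ($s\geq 1$) raise it. Consequently, the lowest nonzero column of $DP$ is controlled entirely by $D^h$ applied to the leading term of $P$.

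The crucial input is that the rows of \eqref{longitudinal:complex}, that is the complexes $(\,\cdot\,,D^h)$ at fixed symmetric degree $d$, are exact. Indeed such a row is exactly
\[
\cdots\longrightarrow \Gamma(E_{-2})\xrightarrow{\ \dd\ }\Gamma(E_{-1})\xrightarrow{\ \rho\ }\mathfrak{X}(M)
\]
tensored over $\mathcal{O}$ with $S(E^*)_d$. Because $(E,Q)$ is a \emph{universal} Lie $\infty$-algebroid, this underlying complex is a geometric resolution of $\mathcal F$: it is exact at each $\Gamma(E_{-i})$, $i\geq 1$, and $\rho(\Gamma(E_{-1}))=\mathcal F$. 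Since $E$ has finite rank, $S(E^*)_d$ is locally free, hence flat over $\mathcal{O}$, so tensoring preserves exactness. At the interior columns $i\geq 1$ this gives exactness unconditionally, which is what item $2$ needs; at column $0$ the image of $\mathrm{id}\otimes\rho$ is precisely $\E\otimes_\mathcal{O}\mathcal F$, which is where the \emph{longitudinal} hypothesis $p_0\in\E\otimes_\mathcal{O}\mathcal F$ of item $1$ enters. This is also the point at which universality is indispensable: for a non-universal $Q$-manifold the rows need not be exact.

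With exactness of the rows available, the argument is a zig-zag along the depth filtration. Let $P$ be a cocycle of depth $n$. If $n\geq 1$ (item $2$; then $P$ is vertical and never meets column $0$), comparing the column-$(n-1)$ component of $DP=0$ forces $D^h(p_n)=0$, and exactness of the row produces $q$ in column $n+1$ with $D^h(q)=p_n$. If $n=0$ and $P$ is longitudinal (item $1$), I instead use directly that $p_0\in\E\otimes_\mathcal{O}\mathcal F=\mathrm{im}(\mathrm{id}\otimes\rho)$ to obtain such a $q$. In either case, letting $R^{(1)}$ be the vector field whose only component is $q$, the difference $P-\mathrm{ad}_Q(R^{(1)})$ is again a cocycle, now of depth $\geq n+1$, because the column-$n$ component of $\mathrm{ad}_Q(R^{(1)})$ is $D^h(q)=p_n$ while its lower columns vanish. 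Iterating yields $R^{(j)}$ of depth $n+j$ and cocycles $P^{(j)}$ of depth $\geq n+j$ with $P=\mathrm{ad}_Q\bigl(R^{(1)}+\cdots+R^{(j)}\bigr)+P^{(j)}$.

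Finally I would verify convergence. Each column receives contributions from only finitely many of the $R^{(j)}$, since $\mathrm{depth}(R^{(j)})=n+j\to\infty$; hence $R:=\sum_j R^{(j)}$ is a well-defined vector field, of depth $\geq n+1$, and in each column the tails $P^{(j)}$ eventually vanish, so that $\mathrm{ad}_Q(R)=P$. For item $1$ this $R$ is vertical (depth $\geq 1$), and for item $2$ it has depth $n+1$, as claimed; acyclicity of the longitudinal complex is the case $n=0$. The only genuinely delicate points are the exactness of the rows—the heart of the matter, where the universal property of $(E,Q)$ and flatness of $S(E^*)_d$ are used—and the bookkeeping guaranteeing that the infinite lifting assembles into an honest vector field; the latter is automatic from the fact that the column index is bounded below and each correction strictly increases the depth.
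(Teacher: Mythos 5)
Your proof is correct and follows essentially the same route as the paper's: a column-by-column diagram chase in the bicomplex \eqref{longitudinal:complex}, using exactness of the rows (coming from the geometric resolution underlying the universal Lie $\infty$-algebroid, with longitudinality supplying the needed surjectivity onto $\E\otimes_\mathcal{O}\mathcal F$ at column $0$) to kill the leading column and strictly increase the depth at each step. Your added remarks on flatness of $S(E^*)_d$ and on convergence of the infinite correction only make explicit what the paper's proof leaves implicit.
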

\begin{proof}
$(E,Q)$ is a universal $Q$-manifold of $\mathcal{F}$ implies that lines in \eqref{longitudinal:complex} are exact when we restrict the $0$-th column to sections in $\E\otimes_\mathcal{O}\mathcal{F}$. It is now a diagram chasing phenomena. Let $P=(p_0,p_1,\ldots,)\in\mathfrak{L}$ be a longitudinal element which is a $D$-cocycle. By longitudinality there exists an element  $b_1\in\Gamma(S(E^*)\otimes E_{-1})$ such that $(\mathrm{id}\otimes\rho)(b_1)=p_0$. Set $P_1=(0,b_1,0,\ldots)$, that is, we extend $b_1$ by zero on $\Gamma(S(E^*)\otimes E_{\leq -2})$ and $\Gamma(S(E^*)\otimes TM)$. It is clear that $P-D(P_1)=(0,p'_1,p'_2,\ldots)$ is also a $D$-cocycle of depth $1$. In particular, we have $D^h(p'_1)=0$ by exactness there exists $b_2\in\Gamma(S(E^*)\otimes E_{-2})$ such that $D^h(b_2)=p'_1$. As before put $P_2=(0,0,b_2,0,\ldots)$. Similarly, $P-D(P_1)-D(P_2)=(0,0,p''_2,p''_3,\ldots)$ is a $D$-cocycle. By recursion, we end up to construct $P_1,P_2,\ldots$ that satisfy $P-D(P_1)-D(P_2)+\cdots =0$, that is, there exists an element $B=(0,b_1,b_2,\ldots)\in\mathfrak{L}$ such that $D(B)=P$. This proves item ${1}$.

To prove item ${2}$ it suffices to cross out in the diagram \eqref{longitudinal:complex} the columns numbered $0,\ldots,n-~1$, which does not break exactness. The proof now follows as for item ${1}$.
\end{proof}
In particular, we deduce from item 1 of Theorem \ref{thm:longitudinal} the following exact subcomplex.
\begin{corollary}\label{cor:longitudinal}
Let $(E,Q)$ be a universal $Q$-manifold of $\mathcal{F}$.
 The subcomplex $\mathfrak{V}_Q$ of $(\mathfrak{X}(E),\mathrm{ad}_Q)$ made of vertical vector fields $P\in\mathfrak{X}(E)$ that satisfy $P\circ Q(f)=~0$ for all $f\in\mathcal{O}$
 is acyclic.
\end{corollary}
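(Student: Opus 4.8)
The plan is to reduce the statement directly to item 1 of Theorem \ref{thm:longitudinal}, the acyclicity of the longitudinal complex. The one conceptual step needed beforehand is to reinterpret the defining condition $P\circ Q(f)=0$ of $\mathfrak{V}_Q$ in terms of the differential $\mathrm{ad}_Q$, after which the acyclicity will be essentially immediate.

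First I would record the following reformulation. For a vertical (that is, $\mathcal{O}$-linear) vector field $P$ one has $P(f)=0$ for every $f\in\mathcal{O}$, so that the graded commutator gives $[Q,P](f)=-(-1)^{|P|}P(Q(f))$ on functions $f\in\mathcal{O}$. Hence $[Q,P]$ is again vertical \emph{precisely} when $P\circ Q(f)=0$ for all $f\in\mathcal{O}$. In other words, $\mathfrak{V}_Q$ is exactly the space of vertical vector fields $P$ such that $\mathrm{ad}_Q(P)$ is again vertical. Because $\mathrm{ad}_Q^2=0$, the image $[Q,P]$ of such a $P$ is vertical and satisfies $[Q,[Q,P]]=0$, hence lies again in $\mathfrak{V}_Q$; this confirms that $(\mathfrak{V}_Q,\mathrm{ad}_Q)$ is genuinely a subcomplex.

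With this reformulation in hand, acyclicity follows at once. Let $P\in\mathfrak{V}_Q$ be an $\mathrm{ad}_Q$-cocycle, so that $[Q,P]=0$. Being vertical, $P$ is in particular longitudinal by Example \ref{longi:examples}(1), and it is an $\mathrm{ad}_Q$-cocycle; so item 1 of Theorem \ref{thm:longitudinal} produces a \emph{vertical} vector field $R$ with $[Q,R]=P$. The decisive point is that this $R$ then automatically lies in $\mathfrak{V}_Q$: indeed $[Q,R]=P$ is vertical, so by the reformulation above $R$ satisfies the defining condition $R\circ Q(f)=0$ of $\mathfrak{V}_Q$. Thus $P$ is a coboundary inside $\mathfrak{V}_Q$, which is exactly the desired acyclicity.

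I do not anticipate a genuine obstacle, since all the analytic content — the diagram chase in \eqref{longitudinal:complex} exploiting universality and the exactness of the rows — is already packaged into Theorem \ref{thm:longitudinal}. The only points requiring care are the bookkeeping of signs and degrees in the identity $[Q,P](f)=-(-1)^{|P|}P(Q(f))$ and in checking $[Q,[Q,P]]=0$, and ensuring that the primitive $R$ delivered by Theorem \ref{thm:longitudinal} is genuinely \emph{vertical} rather than merely longitudinal; this last subtlety is precisely what the sharpened form of that theorem (``the image through $\mathrm{ad}_Q$ of some vertical vector field'') guarantees, and it is what makes the argument close up.
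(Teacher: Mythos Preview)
Your proposal is correct and follows essentially the same route as the paper: both apply item 1 of Theorem \ref{thm:longitudinal} to a vertical cocycle $P$ to obtain a vertical primitive, and then use the identity $[Q,R](f)=-(-1)^{|R|}R(Q(f))$ (valid for vertical $R$) to conclude that this primitive lies in $\mathfrak{V}_Q$. Your reformulation of $\mathfrak{V}_Q$ as ``vertical $P$ with $\mathrm{ad}_Q(P)$ vertical'' and your explicit verification that $\mathfrak{V}_Q$ is a subcomplex are slight expositional additions, but the argument is the same.
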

\begin{proof}
Let $P\in\mathfrak{X}(E)$ be a vertical vector field which is a $\mathrm{ad}_Q$-cocycle. {Notice that we have automatically $P\circ Q(f)=0$ for all $f\in\mathcal{O}$: indeed, $P$ is a $\mathrm{ad}_Q$-cocycle implies $[Q, P](f)=0$ for all $f\in \mathcal O$. Equivalently, $P\circ Q(f)= (-1)^{|P|}Q\circ P(f)$. Since $P$ is vertical, $P(f)=0$, which proves that $P\circ Q(f)=0$}. By Theorem \ref{thm:longitudinal} there exists a vertical vector field $\widetilde{P}\in\mathfrak{X}(E)$ such that $[Q,\widetilde{P}]=P$. Moreover, $\widetilde{P}\in \mathfrak{V}_Q$, since for all $f\in\mathcal{O}$, $$0=[Q,\widetilde{P}](f)=(-1)^{|\widetilde{P}|}\widetilde{P}\circ Q(f).$$This completes the proof.
\end{proof}
The following remark will be used in the proof of Theorem \ref{main}.
\begin{remark}\label{rmk:arity} For a cocycle $P\in \mathfrak{V}_Q$ of degree $0$ one has $P^{(-1)}= 0$ (for degree reason). By Corollary \ref{cor:longitudinal}, $P$ is the image by $\mathrm{ad}_Q$ of an element, $\widetilde P\in \mathfrak V_Q$ i.e., such that $[Q,\widetilde P]=P$. Also, one can choose ${\widetilde{P}}^{(-1)}=0$: we have  $$[Q^{(0)}, {\widetilde{P}}^{(-1)}]=[Q, \widetilde P]^{(-1)}=P^{(-1)}=0.$$ By exactness of $\mathrm{ad}_{Q^{(0)}}$ (see \cite{LLS}), we have $\widetilde{P}^{(-1)}=[Q^{(0)}, \vartheta]$ for some $\mathcal{O}$-linear map $$\vartheta\in \mathrm{Hom}\left(\Gamma(E^*), \Gamma(S^0(E^*))\right)$$ of degree $-2$. Put $\Bar{P}:= \widetilde P -[Q, \vartheta]$, where $\vartheta$ is extended to a derivation of arity $-1$. Clearly, \begin{align*}
    [Q, \Bar{P}]=P\qquad\text{and}\qquad \Bar{P}^{(-1)}= \widetilde{P}^{(-1)}-[Q, \vartheta]^{(-1)}=\widetilde{P}^{(-1)}-[Q^{(0)}, \vartheta]=0.
\end{align*}
Therefore, $P=\mathrm{ad}_Q(\Bar P)$ with ${\Bar P}^{(-1)}=0$.
\end{remark}

\subsection{Proof of the main results}\label{results:proofs}

This section is devoted to the proof of the main results stated in Section \ref{sec:3}. For the notations, see Appendix \ref{sec:2} and  \ref{appendix-homotopies}.\\

We start with the following lemma.
\begin{lemma}\label{lem:length}Assume $(E,Q)$ is a universal Lie $\infty$-algebroid over $M$. Let $\Bar{\Phi}\colon (S_\mathbb{K}^\bullet\mathfrak{g}[1],Q_\mathfrak{g})\longrightarrow (S_\mathbb{K}^\bullet\mathfrak X(E)[1],\Bar{Q})$ be a coalgebra morphism which is a {Lie $\infty$-morphism up to arity $n\geq 1$}, i.e., $\left(\Bar\Phi\circ Q_\mathfrak{g} -\Bar{Q}\circ\Bar\Phi\right)^{(i)}=0$ for all integer $i\in\{0,\ldots,n\}$. Then, $\Bar{\Phi}$ can be extended to a $\infty$-morphism up to arity $n+1$. 
\end{lemma}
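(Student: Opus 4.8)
The plan is to argue by the standard obstruction-theoretic induction for $L_\infty$-morphisms, using the acyclicity of Theorem~\ref{thm:longitudinal} (equivalently Corollary~\ref{cor:longitudinal}) to produce, at the single step from arity $n$ to arity $n+1$, the primitive that kills the obstruction. Set $\Psi := \Bar{\Phi}\circ Q_\mathfrak{g} - \Bar{Q}\circ\Bar{\Phi}$. This is an $(\Bar{\Phi},\Bar{\Phi})$-coderivation of degree $+1$, hence determined by its Taylor coefficients $\Psi^{(i)}$, and the hypothesis says precisely that $\Psi^{(i)}=0$ for $i\in\{0,\dots,n\}$. Writing $\Phi_{n+1}$ for the arity-$(n+1)$ Taylor coefficient of $\Bar{\Phi}$ (the component with $n+2$ arguments), one checks that $\Phi_{n+1}$ enters $\Psi^{(i)}$ only for $i\ge n+1$, and at arity $n+1$ only through the term $-\mathrm{ad}_Q(\Phi_{n+1})$ coming from $-\Bar{Q}\circ\Bar{\Phi}$. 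Thus extending $\Bar\Phi$ to arity $n+1$ amounts to adjusting $\Phi_{n+1}$ so that $\Psi^{(n+1)}=0$, while leaving $\Psi^{(0)},\dots,\Psi^{(n)}$ untouched.

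First I would do the degree bookkeeping. Since $\Bar\Phi$ has degree $0$ and both $Q_\mathfrak{g}$ and $\Bar Q$ have degree $+1$, the coefficient $\Psi^{(n+1)}$ takes its values in $\mathfrak{X}_{-n}(E)$. As $n\ge 1$ this degree is strictly negative, and under the identification \eqref{eq:identification} every graded vector field of negative degree has vanishing $TM$-component; hence $\Psi^{(n+1)}$ is vertical, in particular longitudinal. This is exactly where the hypothesis $n\ge 1$ is needed: at arity $1$ the obstruction would lie in $\mathfrak{X}_0(E)$, whose $TM$-part need not vanish, and controlling that part requires the weak-symmetry condition through $\mathcal F$ --- this is the base case treated in the proof of Theorem~\ref{main} rather than here.

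Next I would show that $\Psi^{(n+1)}$ is an $\mathrm{ad}_Q$-cocycle. The key identity is the Bianchi-type relation $\Bar{Q}\circ\Psi = -\,\Psi\circ Q_\mathfrak{g}$, which follows formally from $Q_\mathfrak{g}^2=0$ and $\Bar{Q}^2=0$ together with the fact that $\Bar\Phi$ is a coalgebra morphism. Extracting the arity-$(n+1)$ component: on the left, the binary part of $\Bar Q$ only produces brackets $[\Phi_a,\Psi^{(b)}]$ with $a+b=n$, all of which vanish because $\Psi^{(\le n)}=0$, so the left-hand side reduces to $\mathrm{ad}_Q(\Psi^{(n+1)})$; on the right, every term factors through some $\Psi^{(b)}$ with $b\le n$ and thus vanishes. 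Hence $\mathrm{ad}_Q(\Psi^{(n+1)})=0$. I expect this to be the main (if routine) obstacle: one must pin down the sign conventions and verify carefully that no arity-$(n+1)$ term survives on either side apart from $\mathrm{ad}_Q(\Psi^{(n+1)})$.

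Finally, $\Psi^{(n+1)}$ is a vertical $\mathrm{ad}_Q$-cocycle of negative degree, so Corollary~\ref{cor:longitudinal} (the acyclicity of the vertical subcomplex, itself a consequence of Theorem~\ref{thm:longitudinal}) provides a vertical vector field $\Phi'$ on $E$ with $\mathrm{ad}_Q(\Phi')=\Psi^{(n+1)}$. Replacing the arity-$(n+1)$ coefficient $\Phi_{n+1}$ by $\Phi_{n+1}+\Phi'$ does not alter $\Psi^{(0)},\dots,\Psi^{(n)}$ and changes $\Psi^{(n+1)}$ by $-\mathrm{ad}_Q(\Phi')$, so the new coalgebra morphism satisfies $\Psi^{(i)}=0$ for all $i\le n+1$. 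This is the desired extension of $\Bar\Phi$ to a Lie $\infty$-morphism up to arity $n+1$. If one additionally wants control on the internal arity of $\Phi'$, Remark~\ref{rmk:arity} allows choosing the primitive with vanishing arity-$(-1)$ part.
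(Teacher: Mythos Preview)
Your proof is correct and follows essentially the same route as the paper: you identify the obstruction $\Psi^{(n+1)}$, use the Bianchi-type identity $\Bar Q\circ\Psi+\Psi\circ Q_{\mathfrak g}=0$ together with $\Psi^{(\le n)}=0$ and $Q_{\mathfrak g}^{(0)}=0$ to see it is an $\mathrm{ad}_Q$-cocycle, observe that it lands in negative degree (hence is vertical), and then invoke Corollary~\ref{cor:longitudinal} to produce the primitive $\zeta$ used to correct $\Phi_{n+1}$. Your additional commentary on why the hypothesis $n\ge 1$ is needed and on Remark~\ref{rmk:arity} is accurate and matches how the paper uses these ingredients elsewhere.
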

\begin{proof} For convenience, we omit the variables. The identity, $$\Bar{Q}\circ\left(\Bar\Phi\circ Q_\mathfrak{g} -\Bar{Q}\circ\Bar\Phi\right)+\left(\Bar\Phi\circ Q_\mathfrak{g} -\Bar{Q}\circ\Bar\Phi\right)\circ Q_\mathfrak{g}=0$$ taken in arity $n+1$ yields, \begin{align*}
    0=\left(\Bar{Q}\circ(\Bar\Phi\circ Q_\mathfrak{g} -\Bar{Q}\circ\Bar\Phi)\right)^{(n+1)}&=\Bar{Q}^{(0)}\circ(\Bar\Phi\circ Q_\mathfrak{g} -\Bar{Q}\circ\Bar\Phi)^{(n+1)}\\&=[Q,(\Bar\Phi\circ Q_\mathfrak{g} -\Bar{Q}\circ\Bar\Phi)^{(n+1)}],
\end{align*}
\text{since $Q_\mathfrak{g}^{(0)}=0$ and $\left(\Bar\Phi\circ Q_\mathfrak{g} -\Bar{Q}\circ\Bar\Phi\right)^{(i)}=0$ for $i\in\{0,\ldots,n\}$}. It is clear that for all $n\geq 0$ the map $\left(\Bar\Phi\circ Q_\mathfrak{g} -\Bar{Q}\circ\Bar\Phi\right)^{(n+1)}\colon S^{n+2}_\mathbb{K}(\mathfrak{g}[1])\longrightarrow\mathfrak{X}_{-n}(E)[1]$ {takes values in vertical vector fields on $E$ because vector fields of degree $n\geq 1$ are vertical for degree reasons}. By virtue of Corollary \ref{cor:longitudinal} there exists a vector field $\zeta\in\mathfrak{X}_{-n-1}(E)[1]$ of degree $-n-1$ such that  \begin{equation}[Q,\Bar{\Phi}^{(n+1)}+\zeta]=\Bar\Phi^{(n)}\circ Q_\mathfrak{g}^{(1)}-\Bar{Q}^{(1)}\circ\Bar\Phi^{(n)}.
\end{equation}
{By replacing the arity $n+1$ of $\Bar{\Phi}$ by $\Bar{\Phi}^{(n+1)}+\zeta$, and keeping the other arities fixed, one obtains a new map $\Bar{\Psi}\colon (S_\mathbb{K}^\bullet\mathfrak{g}[1],Q_\mathfrak{g})\longrightarrow (S_\mathbb{K}^\bullet\mathfrak X(E)[1],\Bar{Q})$ such that $\Bar{\Psi}^{(j)}:=\Bar{\Phi}^{(j)}$ for $j\neq n+1$ and $\Bar{\Psi}^{(n+1)}:=\Bar{\Phi}^{(n+1)}+\zeta$. The map $\Bar{\Psi}$ satisfies \begin{equation}[Q,\Bar{\Psi}^{(n+1)}]=\Bar\Psi^{(n)}\circ Q_\mathfrak{g}^{(1)}-\Bar{Q}^{(1)}\circ\Bar\Psi^{(n)}.
 \end{equation}This implies that $\left(\Bar\Psi\circ Q_\mathfrak{g} -\Bar{Q}\circ\Bar\Psi\right)^{(n+1)}=0$. By construction, $\Bar\Psi$ is     a Lie $\infty$-morphism up to arity $n+1$, i.e., that satisfies $\left(\Bar\Psi\circ Q_\mathfrak{g} -\Bar{Q}\circ\Bar\Psi\right)^{(i)}=0$ for all integer $i\in\{0,\ldots,n+1\}$}. The proof continues by recursion.
\end{proof}

Let $\mathcal F$ be a singular foliation, and $(E,Q)$ a universal Lie $\infty$-algebroid of $\mathcal F$. We start with the following lemma.
\begin{lemma}\label{initia2:lemm}
  For every weak symmetry Lie algebra action of $\mathfrak g$ on $\mathcal{F}$ there exists a linear map, $\Phi_0\colon \mathfrak g[1]\rightarrow \mathfrak X_0(E)[1]$, such that $[Q,\Phi_0(x)]=0$ and $\Phi_0(x)[f]=\varrho(x)[f]$ for all $x\in\mathfrak g[1]$, $f\in\mathcal{O}$. 
\end{lemma}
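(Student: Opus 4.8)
The plan is to realize $\Phi_0(x)$ as a correction of a naive connection-induced lift of $\varrho(x)$, using the acyclicity established in Theorem \ref{thm:longitudinal}; this is exactly the arity-zero step that initializes the recursion of Lemma \ref{lem:length}. First I would fix a $TM$-connection $\nabla$ on the graded bundle $E$ and use it, as in the remark preceding Equation \eqref{eq:identification}, to associate to each vector field $X\in\mathfrak{X}(M)$ a degree-zero vector field $\widetilde{\nabla}_X\in\mathfrak{X}(E)$ satisfying $\widetilde{\nabla}_X(f)=X[f]$ for all $f\in\mathcal{O}$. For $x\in\mathfrak{g}$ I set $\widehat{X}:=\widetilde{\nabla}_{\varrho(x)}$; by construction this is a degree-zero vector field on $E$ which restricts to $\varrho(x)$ on $\mathcal{O}$, but there is no reason for it to commute with $Q$.

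The heart of the argument is to kill the defect $[Q,\widehat{X}]$. Since $\varrho$ is a weak symmetry action, its first defining condition $[\varrho(x),\mathcal{F}]\subseteq\mathcal{F}$ says precisely that $\varrho(x)\in\mathfrak{s}(\mathcal{F})$ is an infinitesimal symmetry of $\mathcal{F}$, so that Example \ref{longi:examples}(4) guarantees the degree $+1$ vector field $[Q,\widehat{X}]$ is longitudinal for $\mathcal{F}$. Moreover $[Q,\widehat{X}]$ is an $\mathrm{ad}_Q$-cocycle, because $\mathrm{ad}_Q^2=\frac{1}{2}\mathrm{ad}_{[Q,Q]}=0$ as $Q$ is homological. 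Applying item (1) of Theorem \ref{thm:longitudinal}, this longitudinal cocycle is of the form $[Q,V]$ for some \emph{vertical} vector field $V$; comparing degrees forces $V$ to have degree $0$. I would then define $\Phi_0(x):=\widehat{X}-V$. This vector field has degree $0$, satisfies $[Q,\Phi_0(x)]=[Q,\widehat{X}]-[Q,V]=0$, and since $V$ is vertical (hence $V(f)=0$ for $f\in\mathcal{O}$) it still obeys $\Phi_0(x)(f)=\widehat{X}(f)=\varrho(x)[f]$, which are exactly the two required properties. This is the same mechanism that will yield Corollary \ref{1symmetry}, now carried out over all of $\mathfrak{g}$.

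The remaining point, and the only delicate one, is to make the assignment $x\mapsto\Phi_0(x)$ linear. The map $x\mapsto\widehat{X}=\widetilde{\nabla}_{\varrho(x)}$ is already $\mathbb{K}$-linear, so only the choice of the primitive $V$ threatens linearity, since the diagram chase proving Theorem \ref{thm:longitudinal} involves non-canonical choices. To fix this I would choose a basis $\{e_\alpha\}$ of $\mathfrak{g}$ (a Hamel basis if $\mathfrak{g}$ is infinite-dimensional), pick a primitive $V_\alpha$ for each $[Q,\widehat{X}_\alpha]$ as above, and extend by linearity: for a finite combination $x=\sum_\alpha c_\alpha e_\alpha$ the element $V:=\sum_\alpha c_\alpha V_\alpha$ is vertical of degree $0$ and satisfies $[Q,V]=\sum_\alpha c_\alpha[Q,\widehat{X}_\alpha]=[Q,\widehat{X}]$, so that $\Phi_0(x)=\widehat{X}-V$ depends linearly on $x$. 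The main obstacle is therefore not any deep computation but rather arranging this linearity cleanly; everything substantive is delivered by Example \ref{longi:examples}(4) together with the acyclicity of Theorem \ref{thm:longitudinal}.
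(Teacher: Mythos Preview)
Your proof is correct and follows essentially the same route as the paper: extend $\varrho(x)$ to a degree-zero vector field on $E$, observe via Example~\ref{longi:examples}(4) that $[Q,\widehat{X}]$ is a longitudinal $\mathrm{ad}_Q$-cocycle, and invoke Theorem~\ref{thm:longitudinal}(1) to find a vertical primitive whose subtraction yields $\Phi_0(x)$. Your version is in fact slightly more careful than the paper's, since by using the connection-induced lift $\widetilde{\nabla}_{\varrho(x)}$ you make $x\mapsto\widehat{X}$ manifestly linear and you explicitly handle the linearity of the primitive choice via a basis, points the paper leaves implicit.
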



\begin{proof}
For $x\in\mathfrak{g}$, let $\widehat{\varrho(x)}\in \mathfrak X_0(E)$ be any arbitrary extension of $\varrho(x)\in\mathfrak{s}(\mathcal{F})$ to a degree zero vector field on $E$. Since $\varrho(x)$ is a symmetry of $\mathcal{F}$, the degree $+1$ vector field $[\widehat{\varrho(x)},Q]$ is also a longitudinal vector field on $E$, see Example \ref{longi:examples} item ${3}$. In addition, $[\widehat{\varrho(x)},Q]$ is a $\mathrm{ad}_Q$-cocycle. By item ${1}$ of Theorem \ref{thm:longitudinal}, there exists a vertical vector field $Y(x)\in\mathfrak X_0(E)$ of degree zero such that \begin{equation}
    [Q,Y(x)+ \widehat{\varrho(x)})]=0.
\end{equation}

Let us set for $x\in \mathfrak{g}[1]$,\;$\Phi_0(x):=Y(x)+\widehat{\varrho(x)}$. By construction, we have, $[Q,\Phi_0(x)]=0$ and $\Phi_0(x)[f]=\varrho(x)[f]$ for all $x\in\mathfrak g[1],\,f\in\mathcal{O}$. 
\end{proof}

\begin{proof}[Proof of Theorem \ref{main}] Let us show item ${1}$. Note that
Lemma \ref{initia2:lemm}  gives the existence of a linear map $\Phi_0\colon\mathfrak{g}[1]\longrightarrow \mathfrak{X}_0(E)[1]$ such that, $[Q,\Phi_0(x)]=0$ for all $x\in \mathfrak{g}[1]$. For $x,y\in \mathfrak{g}[1]$, consider \begin{equation}
    \Lambda(x,y)=\Phi_0([x,y]_\mathfrak{g})-[\Phi_0(x),\Phi_0(y)].\end{equation}Since $\varrho([x,y]_\mathfrak{g})-[\varrho(x),\varrho(y)]\in \mathcal F$ for all $x,y\in\mathfrak g[1]$, and since $\rho\colon\Gamma(E_{-1})\longrightarrow\mathcal{F}$ surjective, we have $\varrho([x,y]_\mathfrak{g})-[\varrho(x),\varrho(y)]=\rho\left(\eta(x,y)\right)$ for some element $\eta(x,y)\in\Gamma(E_{-1})$ depending linearly on $x$ and $y$. Now we consider the vertical vector field of degree $-1$, $\iota_{\eta(x,y)}\in\mathfrak{X}_{-1}(E)$ which is defined on $\Gamma(E^*)$ as:
$$\iota_{\eta(x,y)} (\alpha):= \langle\alpha, {\eta(x,y)}\rangle\;\; \text{for all}\;\; \alpha\in\Gamma(E^*),$$ and extended it by derivation on the whole space. For every $f\in\mathcal O$,\begin{align*}
\left(\Lambda(x,y)-[Q,\iota_{\eta(x,y)}]\right)(f)&=\left(\varrho([x,y]_\mathfrak{g})-[\varrho(x),\varrho(y)]-\rho(\eta(x,y) \right)[f]\hspace{1cm}\text{(by definition of $\Phi_0$)}\\&=0\hspace{7.47cm}\text{(by definition of $\eta$)}
\end{align*}
It is clear that $\Lambda(x,y)+[Q,\iota_{\eta(x,y)}]$ is a $\text{ad}_Q$-cocycle. Also, $\left(\Lambda(x,y)+[Q,\iota_{\eta(x,y)}]\right)^{(-1)}=0$. Hence, by Corollary \ref{cor:longitudinal} and Remark \ref{rmk:arity}, $\Lambda(x,y)+[Q,\iota_{\eta(x,y)}]$ is of the form $[Q, \Upsilon(x,y)]$ for some vertical vector field $\Upsilon(x,y)\in\mathfrak{X}_{-1}(E)$ of degree $-1$ with $\Upsilon(x,y)^{(-1)}=0$. For all $x,y\in \mathfrak{g}[1]$, we define the Taylor coefficient $\Phi_1\colon\wedge^2\mathfrak{g}[1]\longrightarrow \mathfrak{X}_{-1}(E)[1]$  as $\Phi_1(x,y):=\Upsilon(x,y)+\iota_{\eta(x,y)}$. By construction,  we have the following relation
\begin{equation}
    \Phi_0([x,y]_\mathfrak{g})-[\Phi_0(x),\Phi_0(y)]=[Q, \Phi_1(x,y)],\;\forall x,y\in \mathfrak{g}[1].
\end{equation}


{So far, in the construction of the Lie $\infty$-morphism, we have shown the existence of a Lie $\infty$-morphism $\Bar{\Phi}\colon S_\mathbb{K}^\bullet\mathfrak{g}[1]\longrightarrow S_\mathbb{K}^\bullet\left(\mathfrak{X}(E)[1] \right)$ up to arity $1$, that is $(\Bar{\Phi}\circ Q_\mathfrak{g})^{(i)}=(\Bar{Q}\circ \Bar{\Phi})^{(i)}$ for $i=0,1$. The proof continues by recursion  by applying directly Lemma \ref{lem:length}. This proves item $1$ of the theorem.}
\end{proof}
Before proving item ${2}$ of Theorem \ref{main} we will need the following lemma. For convenience,  we sometimes omit the variables in $\mathfrak{g}$. See Appendix \ref{appendix-homotopies} for the notations.
\begin{lemma}\label{lemma:homotopy}
For any two Lie $\infty$-morphisms $\Gamma,\Omega\colon (S_\mathbb{K}^\bullet(\mathfrak{g}[1]),Q_\mathfrak{g})\longrightarrow (S_\mathbb{K}^\bullet(\mathfrak{X}(E)[1]),\Bar{Q})$ which coincide up to arity $n\geq 0$, i.e. $\Gamma^{(i)}=\Omega^{(i)}$, for $0\leq i\leq n$, their difference in arity $n+1$, namely, $$\Gamma^{(n+1)}-\Omega^{(n+1)}\colon S_\mathbb{K}^{n+2}(\mathfrak{g}[1])\longrightarrow\mathfrak{X}_{-n-1}(E)[1]$$ is valued in $\mathrm{ad}_Q$-coboundary. 
\end{lemma}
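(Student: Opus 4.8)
The plan is to exploit that $\Gamma$ and $\Omega$ are \emph{genuine} Lie $\infty$-morphisms, so the structure equations $\Gamma\circ Q_\mathfrak{g}=\bar Q\circ\Gamma$ and $\Omega\circ Q_\mathfrak{g}=\bar Q\circ\Omega$ hold in \emph{every} arity, and then to compare these two identities in arity $n+1$. First I would expand each side along the arity grading, exactly as in the proof of Lemma \ref{lem:length}. On one side, $Q_\mathfrak{g}$ is concentrated in arity $1$ (its only Taylor coefficient is the bracket $Q_\mathfrak{g}^{(1)}$) and, as a coderivation, sends $S^{n+2}(\mathfrak{g}[1])$ into $S^{n+1}(\mathfrak{g}[1])$; hence the corestriction $(\Gamma\circ Q_\mathfrak{g})^{(n+1)}$ equals $\Gamma^{(n)}\circ Q_\mathfrak{g}^{(1)}$ (suitably symmetrized) and depends \emph{only} on $\Gamma^{(n)}$. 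On the other side, writing $\bar Q=\bar Q^{(0)}+\bar Q^{(1)}$ with $\bar Q^{(0)}=\mathrm{ad}_Q$ and $\bar Q^{(1)}=\lb$ the graded commutator, one has
\begin{equation*}
(\bar Q\circ\Gamma)^{(n+1)}=\mathrm{ad}_Q\circ\Gamma^{(n+1)}+\sum_{a+b=n}\pm\,\bar Q^{(1)}\bigl(\Gamma^{(a)},\Gamma^{(b)}\bigr).
\end{equation*}
The crucial bookkeeping point is that the top coefficient $\Gamma^{(n+1)}$ enters only through the linear term $\mathrm{ad}_Q\circ\Gamma^{(n+1)}$, while the quadratic bracket terms run over $a+b=n$, i.e.\ over Taylor coefficients of arity $\le n$ only.

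Next I would subtract the arity-$(n+1)$ identity for $\Omega$ from that for $\Gamma$. By hypothesis $\Gamma^{(i)}=\Omega^{(i)}$ for $0\le i\le n$, so the left-hand sides agree and all quadratic terms on the right-hand sides agree; everything cancels except the two linear differential terms, leaving
\begin{equation*}
0=\mathrm{ad}_Q\circ\bigl(\Gamma^{(n+1)}-\Omega^{(n+1)}\bigr)=\bigl[Q,\,\Gamma^{(n+1)}-\Omega^{(n+1)}\bigr].
\end{equation*}
Thus $\Gamma^{(n+1)}-\Omega^{(n+1)}$ is valued in $\mathrm{ad}_Q$-cocycles.

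To upgrade \textquotedblleft cocycle\textquotedblright\ to \textquotedblleft coboundary\textquotedblright\ I would invoke Corollary \ref{cor:longitudinal}, whose only hypothesis is verticality. Since $\Gamma^{(n+1)}-\Omega^{(n+1)}$ takes values in $\mathfrak{X}_{-n-1}(E)$ and $n+1\ge 1$, its values have negative degree; as functions in $\E$ have non-negative degree while $\partial/\partial x_i$ has degree $0$, such a vector field can carry no base component and is therefore $\mathcal O$-linear, i.e.\ vertical. A vertical vector field kills $\mathcal O$, so a vertical $\mathrm{ad}_Q$-cocycle automatically satisfies the defining condition of the subcomplex $\mathfrak{V}_Q$; by its acyclicity (Corollary \ref{cor:longitudinal}) there is a vertical $\widetilde P$ with $[Q,\widetilde P]=\Gamma^{(n+1)}-\Omega^{(n+1)}$. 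Hence the difference is valued in $\mathrm{ad}_Q$-coboundaries, as claimed.

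The step I expect to be the main obstacle is the arity bookkeeping of the first paragraph: one must verify carefully that the unknown top coefficients $\Gamma^{(n+1)},\Omega^{(n+1)}$ appear in the arity-$(n+1)$ morphism identity \emph{only} through $\mathrm{ad}_Q$, so that after subtraction the surviving expression is exactly linear in their difference and the nonlinear bracket contributions drop out. This is precisely the mechanism already exploited in Lemma \ref{lem:length}, so I would either reproduce that short computation or extract the required identity directly from it rather than re-deriving the full coalgebra expansion.
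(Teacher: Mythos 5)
Your proposal is correct and follows essentially the same route as the paper: subtract the two structure equations, observe that in arity $n+1$ the right-hand side and the quadratic bracket terms involve only coefficients of arity $\le n$ and hence cancel, leaving $[Q,\Gamma^{(n+1)}-\Omega^{(n+1)}]=0$, and then conclude by acyclicity. The only cosmetic difference is that you invoke Corollary \ref{cor:longitudinal} (after checking verticality by degree), whereas the paper cites item 1 of Theorem \ref{thm:longitudinal} directly; these rest on the same result.
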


\begin{proof}
Indeed, a direct computation yields \begin{align*}
    \Bar{Q}\circ(\Gamma-\Omega)=(\Gamma-\Omega)\circ Q_\mathfrak{g}&\Longrightarrow \Bar{Q}^{(0)}\circ(\Gamma-\Omega)^{(n+1)}-\underbrace{\left((\Gamma-\Omega)\circ Q_\mathfrak{g}\right)^{(n+1)}}_{=0}=0\\&\Longrightarrow[Q,\Gamma^{(n+1)}-\Omega^{(n+1)}]=0\\&\Longrightarrow \Gamma^{(n+1)}-\Omega^{(n+1)}=[Q,
    H^{(n+1)}]\hspace{1cm}\text{(by item ${1}$ of Theorem \ref{thm:longitudinal})}
\end{align*}for some linear map $H^{(n+1)}\colon S_\mathbb{K}^{n+2}(\mathfrak{g}[1])\longrightarrow\mathfrak{X}_{-n-2}(E)[1]$.
\end{proof}

Let us show item ${2}$ of Theorem \ref{main}. Let $\Phi,\Psi\colon \mathfrak g[1]\rightsquigarrow\mathfrak X({E})[1]$ be two different lifts of the action $\mathfrak{g}\longrightarrow\mathfrak{X}(M)$. We denote by $\Bar{\Phi},\Bar{\Psi}\colon S_\mathbb{K}^\bullet(\mathfrak{g}[1])\longrightarrow S_\mathbb{K}^\bullet(\mathfrak{X}(E)[1])$ the unique comorphisms given respectively by the Taylor's coefficients  \begin{equation}
    \begin{cases}
    &\Bar{\Phi}^{(r)}\colon S_\mathbb{K}^{r+1}(\mathfrak{g}[1])\xrightarrow{\Phi_{r}}\mathfrak{X}_{-r}(E)[1]\\&\Bar{\Psi}^{(r)}\colon S_\mathbb{K}^{r+1}(\mathfrak{g}[1])\xrightarrow{\Psi_{r}}\mathfrak{X}_{-r}(E)[1]
    \end{cases},\; \text{for $r\geq 0$}
\end{equation}

For any $x\in \mathfrak{g}[1]$, the degree zero vector field\, $\Phi_{0}(x)-\Psi_{0}(x)\in\mathfrak{X}_0(E)$ is vertical. Moreover, we have, $[Q,\Phi_{0}(x)-\Psi_{0}(x)]=0$. By Corollary \ref{cor:longitudinal} there exists a vector field $H_{0}\in\mathfrak{X}_{-1}(E)$ of degree $-1$, such that $\Psi_{0}(x)-\Phi_{0}(x)=[Q,H_{0}(x)]$

\begin{equation}
   \xymatrix{&\mathfrak{g}[1]\ar[d]^{\Psi_{0}-\Phi_{0}}  \ar@{-->}[ld]_{H_{0}}\\\mathfrak{X}_{-1}(E)[1]\ar[r]^{\text{ad}_Q} & \mathfrak{X}_0(E)[1]}  
\end{equation}


Consider the following differential equation \begin{equation}\label{diff-eq1}
    \begin{cases}
    \frac{\dd\Xi_t}{\dd t}&=\Bar{Q}\circ H_t+H_t\circ Q_\mathfrak g,\hspace{1cm}t\in[0,1]\\\Xi_0&=\Bar \Phi
    \end{cases}
\end{equation}where $(\Xi_t)_{t\in[0,1]}$ is as in Definition \ref{homp:def}, and for $t\in[0,1]$, $H_t$ is the unique $\Xi_t$-coderivation where the only non-zero arity is $H^{(0)}=H_{0}$. Equation \eqref{diff-eq1} gives a homotopy between $\Bar{\Phi}$ and $\Xi_1$. When we consider the arity zero component in Equation \eqref{diff-eq1}, one obtains\begin{align*}
    \frac{\dd\Xi_t^{(0)}}{\dd t}&=\Bar{Q}^{(0)}\circ H_t^{(0)}+H_t^{(0)}\circ Q_\mathfrak g^{(0)}\\&=[Q,H_{0}]\\&=\Psi_{0}-\Phi_{0}=\Bar{\Psi}^{(0)}-\Bar{\Phi}^{(0)}.
\end{align*}Therefore, $\Xi_t^{(0)}=\Bar{\Phi}^{(0)}+t(\Bar{\Psi}^{(0)}-\Bar{\Phi}^{(0)})$, and $\Bar{\Phi}\sim \Xi_1$ with $\Bar{\Psi}^{(0)}=\Xi_1^{(0)}$. Using Lemma \ref{lemma:homotopy}, the image of  any element through the map  $\Bar{\Psi}^{(1)}-\Xi_1^{(1)}\colon S_\mathbb{K}^{2}(\mathfrak{g}[1])\longrightarrow\mathfrak{X}_{-1}(E)[1]$ is a $\text{ad}_Q$-coboundary. Thus, $\Bar{\Psi}^{(1)}-\Xi_1^{(1)}$ can be written as \begin{equation}
    \Bar{\Psi}^{(1)}-\Xi_1^{(1)}=[Q, H^{(1)}],\quad \text{with}\, \;H^{(1)}\colon S_\mathbb{K}^{2}(\mathfrak{g}[1])\longrightarrow\mathfrak{X}_{-2}(E)[1].
\end{equation}Let us go one step further by considering the differential equation on $[0,1]$ given by \begin{equation}\label{equa:diff2}
    \begin{cases}
    \frac{\dd\Theta_t}{\dd t}&=\Bar{Q}\circ H_t+H_t\circ Q_\mathfrak g\\\Xi_0&=\Bar \Xi_1
    \end{cases}
\end{equation}Here $H_t$ is the extension of $H^{(1)}$ as the unique $\Theta_t$-coderivation where all its arities vanish  except the arity 1 which is given by $H^{(1)}$. In arity zero, $(\Theta^{(0)}_t)_{t\in[0,1]}$ is constant and has value $\Theta_1^{(0)}=\Bar{\Psi}^{(0)}$. In arity one we have, \begin{align*}
    \frac{\dd\Theta_t^{(1)}}{\dd t}&=\Bar{Q}^{(0)}\circ H_t^{(1)}\\&=[Q,H^{(1)}]=\Bar{\Psi}^{(1)}-\Xi_1^{(1)}.
\end{align*}Hence, $\Theta_t^{(1)}=\Bar{\Phi}^{(1)}+t(\Bar{\Psi}^{(1)}-\Xi_1^{(1)})$ with $\Bar{\Psi}^{(i)}=\Theta_1^{(i)}$ for $i=0,1$. We then continue this
procedure by gluing all these homotopies as in \cite{CLRL}, p. 40-41. We will obtain at last a Lie $\infty$-morphism $\Omega$ such that  $\Bar{\Phi}\sim\Omega$ and $\Omega^{(i)}=\Bar{\Psi}^{(i)}$ for $i\geq 0$. That means $\Omega=\Bar{\Psi}$, therefore $\Bar{\Phi}\sim\Psi$. This proves item ${2}$. of Theorem \ref{main}.\\

Let us prove item ${3}$ of Theorem \ref{main}. Given two equivalent weak symmetry actions $\varrho,\varrho'$ of $\mathfrak{g}$ on a singular foliation $\mathcal{F}$, i.e., $\varrho,\varrho'$ differ by a linear map $\mathfrak g\longrightarrow \mathfrak{X}(M)$ of the form $x\mapsto \rho(\beta(x))$ for some linear map $\beta\colon\mathfrak g\longrightarrow \Gamma(E_{-1})$. Let $\Phi,\Phi'\colon\mathfrak{g}[1]\rightsquigarrow \left( \mathfrak X_\bullet(E)[1],\lb, \text{ad}_Q \right)$ be a lift into a Lie $\infty$-morphism of the action $\varrho$ and $\varrho'$ respectively. One has for all $x\in \mathfrak{g}[1]$ and $f\in\mathcal{O}$, \begin{align*}
        \left(\Phi_0(x)-\Psi_0(x)-[Q,\iota_{\varphi(x)}]\right)(f)&=\rho(\varphi(x))[f]-\langle Q(f),\varphi(x)\rangle\\&=0.
    \end{align*}
 {Also, \begin{align*}
     [Q, \Phi_0(x)-\Psi_0(x)-[Q,\iota_{\varphi(x)}]]&=[Q, \Phi_0(x)]-[Q,\Psi_0(x)]-[Q,[Q,\iota_{\varphi(x)}]]\\&=0,\qquad\text{(since $[Q, \Phi_0(x)]=[Q,\Psi_0(x)]=[Q,[Q,\iota_{\varphi(x)}]]=0$)}
 .\end{align*}} By Corollary \ref{cor:longitudinal} there exists a vertical derivation $\widehat{H}(x)\in \mathfrak{X}_{-1}(E)$ of degree $-1$ depending linearly on $x\in \mathfrak{g}[1]$ such that $$\Phi_0(x)-\Psi_0(x)=[Q,\widehat{H}(x)+\iota_{\varphi(x)}].$$ Let $H(x):=\widehat{H}(x)+\iota_{\varphi(x)}$, for $x\in \mathfrak{g}[1]$. The proof continues the same as for item ${2}$ of Theorem \ref{main}

\subsection{Particular examples}
We recall that for a regular foliation $\mathcal{F}$ on a manifold $M$ (i.e., $\mathcal{F}=\Gamma(F)$ for some involutive subvector bundle $F\subseteq TM$), the Lie algebroid $E_{-1}=F[1]$ whose sections form $\mathcal F$, is a universal Lie $\infty$-algebroid of $\mathcal F$. {In particular, $E_{-i}=0$ for $i\geq 2$}. Its corresponding $Q$-manifold is given by the leafwise De Rham differential on $\Gamma(\wedge^\bullet F^*)$. {Also, for any symmetry $X\in \mathfrak{s}(\mathcal{F})$, i.e., any vector field $X\in \mathfrak{X}(M)$ such that $[X, \mathcal{F}]\subset \mathcal{F}$, the Lie derivative $\mathcal{L}_X\colon \Gamma(\wedge^kT^*M)\longrightarrow \Gamma(\wedge^kT^*M)$ along $X$ induces a vector field in $\mathfrak{X}_0(E)$ i.e., a degree zero derivation of $\Gamma(S^\bullet (E^*))$.}

\begin{example}
Let $\mathcal{F}$ be a regular foliation on a manifold $M$. Every weak symmetry action $\mathfrak{g}\longrightarrow ~\mathfrak X(M),\,x\longmapsto \varrho(x)$,  of $\mathcal F$, can be lifted to Lie $\infty$-morphism $\Phi\colon\mathfrak{g}[1]\rightsquigarrow \left( \mathfrak X_\bullet(E)[1],\lb, \text{ad}_Q \right)$ given explicitly as follows:
\begin{align}
   x\in\, &\mathfrak{g}[1]\longmapsto \Phi_0(x)={\mathcal{L}}_{\varrho(x)}\in\mathfrak{X}_0(E)[1]\\x\wedge y\in\wedge^2&\mathfrak g[1]\longmapsto \Phi_1(x,y)=\iota_{\chi(x,y)}\in\mathfrak{X}_{-1}(E)[1]\end{align}
   and $\left(\Phi_i\colon \wedge^{i+1}\mathfrak{g}[1]\longrightarrow \mathfrak{X}_{-i}(E)\right)\equiv 0$, for all $i\geq 2$, where $\chi(x,y):=\varrho([x,y]_\mathfrak{g})-[\varrho(x),\varrho(y)]$ for $x,y\in\mathfrak{g}$. 
\end{example}
\begin{example}
Let $\mathcal F$ be a singular foliation on a manifold $M$ together with a strict symmetry action $\varrho\colon\mathfrak{g}\longrightarrow\mathfrak{X}(M)$ such that $\varrho({\mathfrak{g}})\subset\mathcal{F}$. Hence, $C^\infty(M)\varrho({\mathfrak{g}})$ is a singular foliation which is the image of the transformation Lie algebroid $\mathfrak g\times M$. The  universality theorem (see \cite{LLS,CLRL}) provides the existence of a Lie $\infty$-morphism $\nu\colon\mathfrak{g}[1]\longrightarrow E$. Let us call its Taylor coefficients $\nu_n\colon\wedge^{n+1}\mathfrak{g}[1]\longrightarrow E_{-n-1},\,n\geq 0$. We may take for example the $0$-th and $1$-th Taylor coefficients of a Lie $\infty$-morphism that lifts $\varrho$ as: \begin{align*}\label{natural}\Phi_0(x)&:=[Q,\iota_{\nu_0(x)}]\in\mathfrak{X}_{0}(E)[1],\; \text{for}\; x\in \mathfrak g[1].\\\Phi_1(x,y)&:=[Q,\iota_{\nu_1(x,y)}]^{(-1)}-\sum_{k\geq 0}[[Q,\iota_{\nu_0(x)}],\iota_{\nu_0(y)}]^{(k)}\in\mathfrak{X}_{-1}(E)[1],\; \text{for}\; x,y\in \mathfrak g[1].\end{align*}
Note that in this case the action $\varrho$ is equivalent to zero, therefore by item ${3}$ of Theorem \ref{main} the Lie $\infty$-morphism $\Phi$ is homotopic to zero. 
\end{example}
\section{Lifts of weak symmetry actions and Lie $\infty$-algebroids}\label{sec:4}

In this section, we consider the Lie algebra $\mathfrak{g}[1]$ as the trivial vector bundle over $M$  with fiber $\mathfrak{g}[1]$. 

The following proposition says that any lift of strict symmetry action of $\mathfrak{g}$ on a singular foliation $\mathcal{F}$ induces a Lie $\infty$-algebroids with some special properties and vice versa. See \cite{MEHTA2012576}, Proposition 3.3, for a proof of the following statement.
\begin{proposition}\label{alt:thm-res}Let $(E,Q)$ be a Lie $\infty$-algebroid over a singular foliation $\mathcal F$. 
Every Lie $\infty$-morphism 
  $\Phi\colon(\mathfrak{g}[1],\lb_\mathfrak{g})\rightsquigarrow(\mathfrak X_\bullet(E)[1],\lb,\mathrm{ad}_{Q})$ with $\mathfrak{g}$ of finite dimension
  induces a Lie $\infty$-algebroid  $(E\oplus\mathfrak{g}[1],Q')$ with \begin{equation}\label{def:Q}
  Q':=\dd^{\text{CE}}+ Q + \sum_{
k\geq 1,i_1,\ldots,i_k=1,\ldots,\mathrm{dim}(\mathfrak{g})}\frac{1}{k!}\xi^{i_1}\odot\cdots\odot\xi^{i_k}\Phi_{k-1}(\xi_{i_1},\ldots,\xi_{i_{k}}),
\end{equation}
where $\dd^{\text{CE}}$ is the Chevalley-Eilenberg complex of $\mathfrak{g}$,  $\xi^{1},\ldots,\xi^{\mathrm{dim}(\mathfrak{g})}\in\mathfrak{g}^
*$ is the dual basis of some basis $\xi_{1},\ldots,\xi_{\mathrm{dim}(\mathfrak{g})}\in\mathfrak{g}$ and for all $k\geq 0$, $\Phi_k\colon S^{k+1}\mathfrak{g}[1]\longrightarrow \mathfrak{X}_{-k}(E)[1]$ is the $k$-th Taylor coefficients of $\Phi$.\\

In the dual point of view, \eqref{def:Q} corresponds to a Lie $\infty$-algebroid over the complex
\begin{equation}\label{semi-resol1}
 \begin{array}{c} \cdots\stackrel{\ell_1}{\longrightarrow}  E_{-3}\stackrel{\ell_1}{\longrightarrow}  E_{-2} \stackrel{\ell_1}{\longrightarrow}\mathfrak g[1]\oplus E_{-1} \stackrel{\rho'}{\longrightarrow}TM\end{array}
\end{equation}
whose brackets satisfy
\begin{enumerate}\item the anchor map $\rho'$ sends an element $x\oplus e\in\mathfrak{g}[1]\oplus E_{-1}$ to $\varrho(x)+\rho(e)\in\varrho(\mathfrak g)+T\mathcal F$, {where $\varrho\colon \mathfrak{g}\longrightarrow \mathfrak{X}(W)$ is the weak symmetry action induced by $\Phi$,} \item the binary bracket satisfies $$\ell_2\left(\Gamma(E_{-1}),\Gamma(E_{-1})\right)\subset \Gamma(E_{-1})\quad \text{and}\quad\ell_2(\Gamma(E_{-1}),x)\subset \Gamma(E_{-1}),\; \forall\, x\in\mathfrak{g}[1]$$\item the $\mathfrak g[1]$-component of the binary bracket on constant sections of $\mathfrak{g}[1]\times M$ is the Lie bracket of $\mathfrak{g}[1]$.\end{enumerate}Conversely, if there exists a Lie $\infty$-algebroid $(E',Q')$ whose underlying complex of vector bundles is of the form $\eqref{semi-resol1}$ and that satisfies item ${1}$, $\emph{2}$ and $\emph{3}$, then there is a Lie $\infty$-morphism $$\Phi\colon(\mathfrak{g}[1],\lb_\mathfrak{g})\rightsquigarrow \left( \mathfrak X_\bullet(E)[1],\lb, \emph{ad}_{Q} \right)$$  which is defined on a given basis $\xi_1,\ldots,\xi_d$ of $\mathfrak{g}$ by:
\begin{equation}\label{expl:lift}
\Phi_{k-1}(\xi_{i_1},\ldots,\xi_{i_k})=\emph{pr}\circ[\cdots[[Q',\iota_{\xi_{i_1}}],\iota_{\xi_{i_2}}],\ldots,\iota_{\xi_{i_k}}]\subset\mathfrak{X}(E)[1],
\;k\in\mathbb{N},\end{equation}
where $\emph{pr}$ stands for the projection map $\mathfrak{X}{(E')}[1]\longrightarrow \mathfrak{X}{(E)[1]}$. 
\end{proposition}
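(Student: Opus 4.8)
The plan is to prove the two directions of Proposition \ref{alt:thm-res} separately, treating the forward direction (from a Lie $\infty$-morphism $\Phi$ to a homological vector field $Q'$) as essentially a bookkeeping verification, and the converse (reconstructing $\Phi$ from $Q'$) as the place where the real structure must be extracted. For the forward direction, I would first recall that a Lie $\infty$-algebroid structure on $E \oplus \mathfrak{g}[1]$ is the same as a degree $+1$ derivation $Q'$ of $\Gamma(S((E \oplus \mathfrak{g}[1])^*))$ squaring to zero. Since $\Gamma(S((E\oplus\mathfrak{g}[1])^*)) \cong \Gamma(S(E^*)) \otimes S(\mathfrak{g}^*)$, the three summands in \eqref{def:Q} each have a transparent meaning: $\dd^{\mathrm{CE}}$ acts on the $S(\mathfrak{g}^*)$ factor, $Q$ on the $\Gamma(S(E^*))$ factor, and the sum $\sum_k \frac{1}{k!}\xi^{i_1}\odot\cdots\odot\xi^{i_k}\Phi_{k-1}(\cdots)$ is the generating series encoding all Taylor coefficients of $\Phi$. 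The whole point is then that the condition $(Q')^2 = 0$ unpacks, degree by degree in the $\xi$-variables, into exactly the system of identities that says $\Phi$ is a Lie $\infty$-morphism, i.e. $\Bar\Phi \circ Q_{\mathfrak{g}} = \Bar Q \circ \Bar\Phi$, as displayed in the Remark following Theorem \ref{main}. Concretely, $[\dd^{\mathrm{CE}} + Q, \cdot]$ reproduces the left-hand sides of those compatibility relations (the $[Q,\Phi_1(x,y)]$ terms and their higher analogues) while the self-bracket of the $\Phi$-series produces the $[\Phi_0(x),\Phi_0(y)]$-type terms. I would not grind through the combinatorial coefficients but rather point out that this matching of $(Q')^2=0$ with the morphism equations is the standard correspondence between $L_\infty$-actions and $Q$-manifolds, and cite \cite{MEHTA2012576} Proposition 3.3 for the verification.

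Next I would read off properties (1), (2), (3) directly from the formula for $Q'$. The underlying linear complex \eqref{semi-resol1} is obtained by taking the linear (arity-compatible) part of $Q'$: the differentials $\ell_1$ on the $E_{-i}$ for $i \geq 2$ come from $Q$ unchanged, the new anchor $\rho'$ restricted to $E_{-1}$ is the old $\rho$, and on the $\mathfrak{g}[1]$-summand the anchor is $\varrho$ because the arity-zero part $\Phi_0(x)$ acts on $\mathcal{O}$ as $\varrho(x)$ by the lifting condition $\Phi_0(x)[f]=\varrho(x)[f]$ from Definition \ref{def:lift}; this gives (1). For (2), the binary bracket is the quadratic part of $Q'$, and the stated closure statements $\ell_2(\Gamma(E_{-1}),\Gamma(E_{-1}))\subset\Gamma(E_{-1})$ and $\ell_2(\Gamma(E_{-1}),x)\subset\Gamma(E_{-1})$ hold because $\Phi_0(x) \in \mathfrak{X}_0(E)$ is a degree-zero vector field on $E$ alone, so bracketing with sections of $E_{-1}$ cannot produce $\mathfrak{g}[1]$-components. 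Property (3) is immediate from the $\dd^{\mathrm{CE}}$ summand, whose quadratic part on $\mathfrak{g}^*$ is dual to $\lb_\mathfrak{g}$.

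For the converse I would start from any $Q'$ on a complex of the form \eqref{semi-resol1} satisfying (1)--(3), and \emph{define} the candidate Taylor coefficients by the iterated-bracket formula \eqref{expl:lift}, namely $\Phi_{k-1}(\xi_{i_1},\ldots,\xi_{i_k}) = \mathrm{pr}\circ [\cdots[[Q',\iota_{\xi_{i_1}}],\iota_{\xi_{i_2}}],\ldots,\iota_{\xi_{i_k}}]$, where $\iota_{\xi_i}$ is the vertical vector field associated to the constant section $\xi_i$ of the trivial bundle $\mathfrak{g}[1]\times M$ and $\mathrm{pr}$ is projection onto $\mathfrak{X}(E)[1]$. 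I would then have to check two things: that these maps land in the correct spaces $\mathfrak{X}_{-(k-1)}(E)[1]$ (a degree count, using that each $\iota_{\xi_i}$ lowers degree by one while $Q'$ raises it by one, and that $\mathrm{pr}$ removes the $\mathfrak{g}[1]$-directions), and that they are graded-antisymmetric in the $\xi$-arguments (which follows from the commutativity of the $\iota_{\xi_i}$ among themselves, since they are odd vertical derivations in pairwise-commuting directions). The main obstacle is the third and genuinely substantive point: verifying that the collection $(\Phi_k)_{k\geq 0}$ so defined satisfies the $L_\infty$-morphism relations. I expect this to follow from $(Q')^2 = 0$ together with the graded Jacobi identity, by repeatedly applying $\mathrm{ad}_{Q'}$ to nested brackets of the $\iota_{\xi_i}$ and using that $[Q', \iota_{\xi_i} ]$ has $\mathfrak{g}$-Chevalley--Eilenberg, anchor, and pure-$E$ contributions; the Chevalley--Eilenberg part (controlled by (3)) reproduces the $\lb_\mathfrak{g}$-terms $\Phi_{k}([x,y],\ldots)$ on the morphism side, while condition (2) guarantees the projection $\mathrm{pr}$ is compatible with the brackets so that no spurious $\mathfrak{g}[1]$-valued terms survive. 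The delicate part is tracking these iterated brackets and confirming, via the Jacobi identity, that the projected relations are precisely the Lie $\infty$-morphism equations; this is exactly the content of Proposition 3.3 of \cite{MEHTA2012576}, to which I would appeal for the full computation rather than reproducing it in detail.
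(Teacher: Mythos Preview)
Your proposal is correct and follows essentially the same approach as the paper. The paper also treats the forward direction as a direct computation (deferring to \cite{MEHTA2012576}), and for the converse it likewise extracts the $L_\infty$-morphism relations from $(Q')^2=0$ via graded Jacobi identities; the only difference is that the paper names this explicitly as the \emph{Voronov trick} and writes the starting identity in the form $0=\mathrm{pr}\circ[\cdots[[[Q',Q'],\iota_{\xi_{i_1}}],\iota_{\xi_{i_2}}],\ldots,\iota_{\xi_{i_k}}]$, which is the cleanest way to organize the computation you describe.
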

\begin{proof}We explain the idea of the proof. A direct computation gives the first implication. Conversely, let us denote by $Q'$ the homological vector fields of Lie $\infty$-algebroid whose underlying complex of vector bundles is of the form $\eqref{semi-resol1}$. The map defined in Equation \eqref{expl:lift} is indeed a lift into a Lie $\infty$-morphism of the weak symmetry action $\varrho$:

\begin{itemize}
    \item It is not difficult to check that for any $\xi\in\mathfrak{g}[1]$, one has $[Q,\Phi_0(\xi)]=0$.
    \item The fact that $\Phi$ defines a Lie $\infty$-morphism can be found using Voronov trick \cite{Voronov2}, i.e, doing Jacobi's identity inside the null derivation\begin{equation}\label{trick}
        0=\text{pr}\circ[\cdots[[[Q',Q'],\iota_{\xi_{i_1}}],\iota_{\xi_{i_2}}],\ldots,\iota_{\xi_{i_k}}].
    \end{equation}
\end{itemize}
A direct computation of Equation \eqref{trick} falls exactly on the requirements of Definition \ref{def:morph}.

\end{proof}
\begin{remark}
Proposition \ref{alt:thm-res} is stated in the finite dimensional context, i.e., it needs $\mathfrak g$ to be finite dimensional and requires the existence of a geometric resolution for the singular foliation $\mathcal{F}$. The next theorem  proves that: given a weak symmetry action of a Lie algebra $\mathfrak{g}$ (maybe of infinite dimensional) on a Lie-Rinehart algebra $\mathcal{F}\subset \mathfrak{X}(M)$ (we do not require $\mathcal{F}$ being locally finitely generated), such a Lie $\infty$-algebroid {(maybe of infinite dimension in the sense of Definition 1.14 in \cite{CLRL})} with the properties (1), (2) and (3) described at the sections level of the complex \eqref{semi-resol1} in Proposition \ref{alt:thm-res} exists.
\end{remark}
\begin{theorem}\label{alt-thm-res}Let $\varrho\colon\mathfrak{g}\longrightarrow \mathfrak{X}(M)$ be a weak symmetry action on a singular foliation $\mathcal F$. Let $\left((\mathcal{K}_{-i})_{i\geq 1},\dd, \rho\right)$ be a free resolution\footnote{{Possibly of infinite dimension or infinite length.}} of the singular foliation $\mathcal F$ over $M$. 
The complex of trivial vector bundles over $M$
\begin{equation}\label{semi-resol}
  \begin{array}{c} \cdots\stackrel{\dd}{\longrightarrow}  E_{-3}\stackrel{\dd}{\longrightarrow}  E_{-2} \stackrel{\dd}{\longrightarrow}\mathfrak g[1]\oplus E_{-1} \stackrel{\rho'}{\longrightarrow}TM
    \end{array}
\end{equation}
where $\Gamma(E_{-1})=\mathcal{K}_{-i}$, comes equipped with a Lie $\infty$-algebroid structure
\begin{enumerate}
\item whose unary bracket is $\dd$ and whose anchor map $\rho'$, sends an element $x\oplus e\in\mathfrak{g}[1]\oplus E_{-1}$ to $\varrho(x)+\rho(e)\in\varrho(\mathfrak{g})+T\mathcal F$,
\item the binary bracket satisfies $$\ell_2\left(\Gamma(E_{-1}),\Gamma(E_{-1})\right)\subset \Gamma(E_{-1})\quad \text{and}\quad\ell_2(\Gamma(E_{-1}),\Gamma(\mathfrak{g}[1]))\subset \Gamma(E_{-1}),$$
\item the $\mathfrak g[1]$-component of the binary bracket on constant sections of $\mathfrak{g}[1]\times M$ is the Lie bracket of $\mathfrak{g}$.
\end{enumerate}
\end{theorem}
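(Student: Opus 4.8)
The plan is to construct the homological vector field $Q'$ on the graded manifold associated to $\mathfrak g[1]\oplus E$ directly, by a recursion on the arity that mimics the universality construction of \cite{LLS,CLRL} but is carried out on the enlarged complex \eqref{semi-resol}. This avoids the finiteness hypotheses and recovers Proposition \ref{alt:thm-res} in the case where $\mathfrak g$ is finite dimensional and $\mathcal F$ admits a geometric resolution. Concretely, I will build the family of brackets $(\ell_k)_{k\ge 1}$ together with the anchor $\rho'$, equivalently the sequence of Taylor coefficients $Q'_{(k)}$ of $Q'$, and verify the higher Jacobi identities $[Q',Q']=0$ arity by arity. The whole computation is arity-local, so no convergence issue arises and the argument applies verbatim in the infinite-rank, infinite-length setting of Definition 1.14 in \cite{CLRL}.

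First I would fix the data of lowest arity. The unary bracket $\ell_1$ is the resolution differential $\dd$ (the summand $\mathfrak g[1]$ in degree $-1$ receiving nothing from above), and the anchor is $\rho'(x\oplus e)=\varrho(x)+\rho(e)$; this is property (1), and since $\left((\mathcal K_{-i})_{i\ge1},\dd,\rho\right)$ resolves $\mathcal F$, the complex $(\Gamma(E_\bullet),\dd)$ is exact. For the binary bracket $Q'_{(1)}$ I prescribe three pieces: on constant $\mathfrak g[1]$-sections its $\mathfrak g[1]$-component is $\lb_\mathfrak{g}$, i.e. the Chevalley--Eilenberg term $\dd^{\mathrm{CE}}$, which is property (3); on $\Gamma(E_{-1})\times\Gamma(E_{-1})$ a bracket valued in $\Gamma(E_{-1})$ whose image under $\rho$ is the Lie bracket of $\mathcal F$, possible because $[\mathcal F,\mathcal F]\subset\mathcal F$ and $\rho$ is onto, which is property (2); and a mixed term $\ell_2(x,-)$ given by a degree-zero derivation $\nabla_x$ of $E$ covering $\mathrm{ad}_{\varrho(x)}$. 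The existence of $\nabla_x$ is precisely the comparison lemma for chain maps into a resolution: $\varrho(x)$ being an infinitesimal symmetry, $[\varrho(x),\mathcal F]\subset\mathcal F$, together with surjectivity of $\rho$ and freeness (hence projectivity) of the $\mathcal K_{-i}$, produces $\nabla_x$ commuting with $\dd$ and satisfying $\rho\circ\nabla_x=\mathrm{ad}_{\varrho(x)}\circ\rho$. Finally, because $\varrho$ is only a weak action, I set the $E_{-1}$-component of $\ell_2(x,y)$ to be $\eta(x,y)\in\Gamma(E_{-1})$ with $\rho(\eta(x,y))=\varrho([x,y]_\mathfrak{g})-[\varrho(x),\varrho(y)]$; this is exactly what makes $\rho'$ a morphism up to homotopy.

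The recursive core runs as follows. Assume $Q'_{(1)},\dots,Q'_{(n-1)}$ have been constructed so that $[Q',Q']$ vanishes in all arities $\le n-1$, all higher corrections being valued in the $E$-directions (no $\mathfrak g[1]$-output). The arity-$n$ part of the Maurer--Cartan equation reads $2[\dd,Q'_{(n)}]=-R_n$ with obstruction $R_n:=\sum_{i+j=n,\,i,j\ge1}[Q'_{(i)},Q'_{(j)}]$, built only from the already-defined lower arities. Differentiating the graded Jacobi identity and using the lower relations shows $[\dd,R_n]=0$. Moreover the pure-$\mathfrak g[1]$ component of $R_n$ vanishes identically: the only $\mathfrak g[1]$-valued bracket is $\dd^{\mathrm{CE}}$ inside $Q'_{(1)}$, so the pure-$\mathfrak g[1]$ Jacobi reduces to $(\dd^{\mathrm{CE}})^2=0$, which holds because $\mathfrak g$ is a genuine Lie algebra. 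Hence $R_n$ is a $[\dd,\cdot]$-cocycle valued in $\Gamma(E_{\le -1})$, and exactness of $(\Gamma(E_\bullet),\dd)$ yields $Q'_{(n)}$ with $[\dd,Q'_{(n)}]=-\tfrac12 R_n$, valued again in the resolution part. Replacing $Q'$ by $Q'+Q'_{(n)}$ cancels the arity-$n$ obstruction while leaving properties (1)--(3) untouched, since the correction has higher arity than the binary bracket and no $\mathfrak g[1]$-output. Iterating over $n$ produces the full $Q'$ with $[Q',Q']=0$.

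I expect the main obstacle to be the bookkeeping that keeps the recursion self-sustaining: proving that $R_n$ is $[\dd,\cdot]$-closed and, crucially, that it can always be taken with vanishing pure-$\mathfrak g[1]$ component, so that it lands in the image of $[\dd,\cdot]$ on the acyclic resolution rather than at the non-exact degree-$(-1)$ spot where $\mathfrak g[1]$ sits. Once the placement of the obstruction is pinned down, the cocycle-to-coboundary step is the same diagram chase up the resolution as in \cite{LLS}, and the weak-action defect is absorbed entirely into the single datum $\eta$ at the binary level, so no further global obstruction appears.
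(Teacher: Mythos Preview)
Your approach is correct and amounts to the same inductive diagram chase as the paper's proof, though the packaging differs. The paper first assembles $\Gamma(\mathfrak g)\oplus\mathcal F$ into a Lie--Rinehart algebra $\mathcal A$ (with bracket built from $\lb_\mathfrak g$, $\mathrm{ad}_{\varrho(\cdot)}$, the defect $\varphi$, and the bracket of $\mathcal F$), observes that \eqref{semi-resol} is a genuine free resolution of $\mathcal A$, and then invokes Theorem~2.1 of \cite{CLRL}, checking afterwards that the $2$-ary bracket can be chosen with the constraints (2)--(3). You instead keep only the exact resolution of $\mathcal F$ and argue directly that every higher obstruction $R_n$ has vanishing $\mathfrak g[1]$-component, so that it can be killed inside the acyclic part. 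The paper's framing makes exactness immediate but leaves the constraints to a ``close look'' at the cited construction; your framing builds the constraints in from the start but forces you to re-verify the vanishing of the $\mathfrak g[1]$-output by hand.

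One point to tighten: for the induction to start at $n=2$ you need $[Q',Q']=0$ already in arity~$1$, which requires $\ell_2$ to be a chain map on \emph{all} of $\Gamma(E_\bullet)$, not merely compatible with $\rho$ on $E_{-1}$. You only describe $\ell_2$ on $E_{-1}\times E_{-1}$ and the mixed piece $\nabla_x$; the extension to $\mathcal K_{\le -2}$ is itself the first instance of your recursion (the paper handles this by citing Lemma~2.23 of \cite{CLRL}). Also, for $\rho'$ to be a bracket morphism the sign should be $\rho(\eta(x,y))=[\varrho(x),\varrho(y)]-\varrho([x,y]_\mathfrak g)$; the paper carries the same slip.
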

For a proof, see Appendix \ref{proof:Lie-infty}.

\begin{remark}
 When we have $\varrho(\mathfrak{g})\cap T_m\mathcal F=0$ for all $m$ in $M$,  Equation \eqref{semi-resol} is a free resolution of the singular foliation $C^{\infty}(M)\varrho(\mathfrak{g})+\mathcal F$ and we can apply directly the Theorem 2.1 in \cite{CLRL}. Otherwise, we need to show there is no obstruction in degree $-1$ while doing the construction of the brackets if the result still needs to hold. 
\end{remark}

\section{On weak and strict symmetries: an obstruction theory}\label{sec:5}
In this section, we apply theorems in Section \ref{sec:3}
to define a class obstructing the existence of strict symmetry action equivalent to a given weak symmetry action. We apply these results to the problem of extending a strict Lie algebra action on an affine sub-variety to the ambient space. \\

Let us start with some generalities that we will use throughout of this section. Assume we are given\begin{itemize}
\item a Lie algebra $\mathfrak g$ with Lie bracket $\lb_\mathfrak g$,
 \item a universal Lie $\infty$-algebroid $(E,Q)$  of a singular foliation $\mathcal{F}$, 
\item a weak symmetry action $\varrho\colon \mathfrak g\longrightarrow \mathfrak X(M)$ of $\mathfrak g$ on a singular foliation $\mathcal{F}$, together  with $\eta\colon\wedge^2\mathfrak{g}\longrightarrow \Gamma(E_{-1})$   such that $x,y\in\mathfrak{g}$\begin{equation}\label{eq:def:eta}
    \varrho([x,y]_\mathfrak{g})-[\varrho(x),\varrho(y)]=\rho(\eta(x,y)).
\end{equation}

 \end{itemize}  
Theorem \ref{main} states that $\varrho\colon\mathfrak g\rightarrow \mathfrak{X}(M)$ admits a lift to a Lie $\infty$-morphism \begin{equation}
    \label{eq:Lie-morp-obstruction-theory}\Phi\colon (\mathfrak{g}[1],\lb_\mathfrak{g})\rightsquigarrow (\mathfrak X_\bullet(E)[1],\lb,\text{ad}_{Q}).
\end{equation}Equivalently, if $\mathfrak{g}$ is of finite dimension, the Lie $\infty$-morphism \eqref{eq:Lie-morp-obstruction-theory} corresponds (by Proposition \ref{alt:thm-res}) to a Lie $\infty$-algebroid $(E',Q')$ over $M$ such that
    
 \begin{itemize}
     \item $(E, Q)$ is included as a sub-Lie $\infty$-algebroid in a Lie algebroid $(E', Q')$ over $M$,\item its underlying complex is, $E'_{-1}:=~\mathfrak{g}[1]\oplus E_{-1}$, and for any $i\geq 2$, $E_{-i}'=E_{-i}$, 
    namely \begin{equation}\label{semi-res}
  \begin{array}{c} \cdots\stackrel{\dd=\ell_1}{\longrightarrow}  E_{-3}\stackrel{\dd=\ell_1}{\longrightarrow}  E_{-2} \stackrel{\dd=\ell_1}{\longrightarrow}\mathfrak g[1]\oplus E_{-1} \stackrel{{\rho'}}{\longrightarrow}TM,
    \end{array}
\end{equation}\item we have,  $$\ell'_2(x\oplus 0,y\oplus 0)=[x,y]_{\mathfrak{g}}\oplus \eta(x,y)$$ 
and $$\ell_2'(x,\Gamma(E_{-1}))\subset\Gamma(E_{-1})$$ for all $x\in\mathfrak{g}[1]$.
\end{itemize}   

\begin{remark}
It is important to notice that the Lie $\infty$-algebroid $(E',Q')$ can be constructed  even if $\mathfrak{g}$ and $(E,\dd)$ are of infinite dimensions (see Theorem \ref{alt-thm-res}).
\end{remark}


\begin{remark}
In Equation \eqref{semi-res}, the complex $(E,\ell_1)$ can be chosen to be  minimal at a point $m\in M$, i.e., ${\ell_1}_{|_m}=0$, provided that a geometric resolution of $\mathcal{F}$ exists. By Proposition 4.14 in \cite{LLS} the isotropy Lie algebra $\mathfrak{g}_m=\frac{\mathcal{F}(m)}{\mathcal{I}_m\mathcal{F}}$ of the singular foliation $\mathcal{F}$ at the point $m\in M$ is isomorphic to $\ker(\rho_m)$. This allows to denote the latter space also by $\mathfrak{g}_m$.
\end{remark}

\begin{lemma}\label{class}
{Let $m\in M$ be a fixed point of the $\mathfrak g$-action $\varrho$}. 
Assume that the underlying complex $(E,\ell_1)$ is minimal at a point $m$, i.e., ${\ell_1}_{|_m}=0$. The map
\begin{equation}\label{eq:action-CE}
    {\nu}\colon\mathfrak{g}\longrightarrow \emph{End}\left(\mathfrak{g}_m\right),\;x\longmapsto\ell_2'(x\,,\cdot)_{|_m}
\end{equation} satisfies \begin{itemize}
    \item[(a)] $\nu([x,y]_\mathfrak{g})-[\nu(x),\nu(y)]+\ell_2(\,\cdot,\eta(x,y))_{|_m}=0$,
    \item[(b)] $\nu(z)\left(\eta(x,y)_{|_m}\right)-\eta([x,y]_\mathfrak{g},z)_{|_m}+\circlearrowleft(x,y,z)=0$.
\end{itemize}

\end{lemma}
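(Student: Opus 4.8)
The plan is to read off both identities (a) and (b) from the single homotopy Jacobi identity of the Lie $\infty$-algebroid $(E',Q')$ attached to $\Phi$ by Proposition~\ref{alt:thm-res}, evaluated at the fixed point $m$. Recall that $(Q')^2=0$ encodes, among its components, the \emph{Jacobiator identity} for three sections $a,b,c$ of degree $-1$ (that is, of $\mathfrak g[1]\oplus E_{-1}$):
\begin{equation*}
\ell'_2(\ell'_2(a,b),c)+\circlearrowleft(a,b,c)=\ell'_1\bigl(\ell'_3(a,b,c)\bigr),
\end{equation*}
the remaining homotopy-Jacobi terms $\ell'_3(\ell'_1 a,b,c)$ dropping out because $\ell'_1$ vanishes on degree $-1$ sections. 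Here $\ell'_3$ is $\mathcal O$-multilinear and $\ell'_1=\dd$ is $\mathcal O$-linear, so at $m$ the right-hand side equals $\ell'_1|_m\bigl(\ell'_3(a,b,c)|_m\bigr)=0$ by the minimality hypothesis $\ell'_1|_m=0$. Thus the whole point is that \emph{the Jacobiator of $\ell'_2$ vanishes at $m$}.

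Before specialising, I would record the simplifications that occur because $m$ is a fixed point of $\varrho$, i.e. $\varrho(x)|_m=0$ for all $x\in\mathfrak g$. First, the Leibniz rule $\ell'_2(x,fe)=f\,\ell'_2(x,e)+\varrho(x)[f]\,e$ shows that its anchor anomaly vanishes at $m$, so $\nu(x)=\ell'_2(x,\cdot)|_m$ is a genuine, tensorial endomorphism of the fibre $(E_{-1})|_m$. Second, the anchor-bracket compatibility $\rho\circ\ell'_2(x,\cdot)=\mathrm{ad}_{\varrho(x)}\circ\rho$ together with $\varrho(x)|_m=0$ gives $\rho\bigl(\ell'_2(x,v)\bigr)|_m=0$ for every $v\in\ker\rho_m$, so $\nu(x)$ preserves $\mathfrak g_m=\ker\rho_m$ and indeed $\nu(x)\in\mathrm{End}(\mathfrak g_m)$. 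Likewise, evaluating $\rho(\eta(x,y))=\varrho([x,y]_\mathfrak g)-[\varrho(x),\varrho(y)]$ at $m$ yields $\rho(\eta(x,y))|_m=0$, hence $\eta(x,y)|_m\in\mathfrak g_m$; this is what makes the expressions $\nu(z)(\eta(x,y)|_m)$ in (b) meaningful.

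For item (a) I would apply the vanishing of the Jacobiator at $m$ to $a=x$, $b=y\in\mathfrak g[1]$ and $c=v\in\mathfrak g_m\subset\Gamma(E_{-1})$. Expanding the first cyclic term with $\ell'_2(x,y)=[x,y]_\mathfrak g\oplus\eta(x,y)$ and using $\ell'_2(\mathfrak g[1],\Gamma(E_{-1}))\subset\Gamma(E_{-1})$ produces $\nu([x,y]_\mathfrak g)(v)$ together with $\ell_2(\eta(x,y),v)|_m$, while the two remaining cyclic terms, after using the graded antisymmetry of $\ell'_2$ on degree $-1$ elements and the tensoriality of $\nu(x),\nu(y)$ at $m$, combine into $-[\nu(x),\nu(y)](v)$. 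Setting the sum to zero gives exactly (a). For item (b) I would instead take $a=x$, $b=y$, $c=z$ all in $\mathfrak g[1]$ and project the (vanishing) Jacobiator onto $\mathfrak g[1]\oplus E_{-1}$: the $\mathfrak g[1]$-component reduces to $\sum_{\mathrm{cyc}}[[x,y]_\mathfrak g,z]_\mathfrak g=0$, the Jacobi identity of $\mathfrak g$, and carries no information, whereas the $E_{-1}$-component collects $\eta([x,y]_\mathfrak g,z)|_m$ from $\ell'_2([x,y]_\mathfrak g,z)$ and $-\nu(z)(\eta(x,y)|_m)$ from $\ell'_2(\eta(x,y),z)|_m=-\ell'_2(z,\eta(x,y))|_m$, giving (b).

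The routine but genuinely delicate part will be the sign bookkeeping in the last step: one must track the degree $-1$ shift throughout, the graded antisymmetry $\ell'_2(a,b)=-\ell'_2(b,a)$ on degree $-1$ inputs, and the signs entering the cyclic sums, in order to land precisely on the stated signs (in particular to obtain the commutator $[\nu(x),\nu(y)]$ rather than an anticommutator, and the $+\circlearrowleft(x,y,z)$ pattern in (b)). The only conceptual point requiring care is the justification that every $\ell'_1$-contribution to the Jacobiator dies at $m$; this rests on the tensoriality of $\ell'_3$ and on minimality $\ell'_1|_m=0$, and on the fixed-point property $\varrho(x)|_m=0$, which is precisely what makes $\nu$ tensorial and $\mathfrak g_m$-valued in the first place.
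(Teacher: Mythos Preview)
Your proposal is correct and follows essentially the same route as the paper: both arguments read off (a) and (b) from the (homotopy) Jacobi identity of $\ell'_2$ on $(E',Q')$, specialised to the inputs $(x,y,e)$ and $(x,y,z)$ respectively and evaluated at $m$, using minimality $\ell_1|_m=0$ to kill the $\ell'_1\ell'_3$ defect and the fixed-point condition $\varrho|_m=0$ to make $\nu$ tensorial. Your write-up is in fact more explicit than the paper's about why the $\ell'_3$ correction disappears and why $\nu$ is well defined on $\mathfrak g_m$.
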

\begin{proof}
The map $\nu$ in \eqref{eq:action-CE} is well-defined since $\varrho_{|_m}=0$ and $\mathfrak{g}_m=\ker \rho_m$. The Jacobi identity on elements $x,y\in \mathfrak{g}[1],\,e\in \Gamma(E_{-1})$, evaluated at the point $m$, implies that $$\nu({[x,y]_\mathfrak{g}})(e_{|_m})-[\nu(x),\nu(y)](e_{|_m})+\ell_2(\eta(x,y),e)_{|_m}=0.$$ This proves item (a). Likewise,  Jacobi identity on  elements $x,y,z\in\mathfrak{g}[1]$ together with  ${\ell_1}_{|_m}=0$ give:\begin{align*}
    \ell'_2(\ell_2'(x,y),z)_{|_m} +\circlearrowleft(x,y,z)=0\,&\Longrightarrow\,\ell'_2([x,y]_\mathfrak{g},z)_{|_m}+\ell'_2(\eta(x,y),z)_{|_m} +\circlearrowleft(x,y,z)=0,\\&\Longrightarrow \,\nu(z)\left(\eta(x,y)_{|_m}\right)-\eta([x,y]_\mathfrak{g},z)_{|_m}+\circlearrowleft(x,y,z)=0.
\end{align*}
Here we have used the definition of $\ell'_2$ on degree $-1$ elements and Jacobi identity for the bracket $[\cdot\,,\cdot]_\mathfrak{g}$. This proves item (b).
\end{proof}

By Lemma \ref{class}, $\mathfrak{g}_m$ is equipped with a $\mathfrak{g}$-module structure when $\eta(x,y)_{|_m}$ is  for all $x,y\in \mathfrak{g}$ valued in the center of the isotropy Lie algebra $\mathfrak{g}_m$. {
The following proposition is built on this last point. It defines an obstruction class mentioned earlier in the introduction of the paper as an obstruction of the possibility of turning a weak symmetry action into a strict symmetry action. Recall that $\eta$ is defined by Equation \eqref{eq:def:eta}. Notice that if $m\in M$ is a fixed point of the $\mathfrak g$-action $\varrho$, then  this implies in particular that $\eta(x,y)|_m\in \ker \rho_m$.}

\begin{proposition}\label{Prop:class}
{Let $m\in M$ be a fixed point of the $\mathfrak g$-action $\varrho$}. Assume that 
\begin{itemize}
\item the underlying complex $(E,\ell_1)$  of $(E,Q)$ is minimal at $m$,
      \item for all $x,y\in \mathfrak{g}$, $\eta(x,y)_{|_m}$\,is valued in the center\footnote{In particular, when the $2$-ary bracket $\ell_2$ is zero at $m$ when applied to two elements of degree $-1$, i.e., $\mathfrak{g}_m$ is Abelian, we have $Z(\mathfrak{g}_m)=\mathfrak{g}_m$.} $Z(\mathfrak{g}_m)$ of  $\mathfrak{g}_m$.
\end{itemize}
 Then,
\begin{enumerate}
    \item {For every $x\in \mathfrak g[1]$, $\ell_2'(x,\cdot\,)$ preserves $Z(\mathfrak{g}_m)$}, and the restriction of the $2$-ary bracket 
\[\ell_2'\colon\mathfrak{g}\otimes Z(\mathfrak{g}_m)\longrightarrow Z(\mathfrak{g}_m)\]endows $Z(\mathfrak{g}_m)$ with a $\mathfrak{g}$-module structure which does not
depend neither on the choice of a weak symmetry action  $\varrho$ nor of a universal Lie $\infty$-algebroid of $\mathcal{F}$, nor of the Lie $\infty$-morphism $\Phi\colon \mathfrak{g}[1]\rightsquigarrow\mathfrak{X}(E)[1]$.
\item the restriction of the map $\eta\colon\wedge^2\mathfrak{g}\longrightarrow {E_{-1}}$ at $m$ \[\eta_{|_m}\colon\wedge^2\mathfrak{g}\longrightarrow Z(\mathfrak{g}_m)\]is a $2$-cocycle for the Chevalley-Eilenberg complex of $\mathfrak{g}$ valued in $Z(\mathfrak{g}_m)$,
\item the cohomology class of this cocycle does not depend on the representatives of the equivalence class of $\varrho$, nor on the choices made in the construction,
\item  if $\varrho$ is equivalent to a strict symmetry action, then $\eta_{|_m}$ is exact.
\end{enumerate}
\end{proposition}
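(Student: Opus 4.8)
The plan is to extract everything from the homotopy Jacobi identities of the Lie $\infty$-algebroid $(E',Q')$ of \eqref{semi-res}, evaluated at the fixed point $m$, where minimality $\ell_1|_m=0$ collapses them to \emph{strict} graded identities on the fibre. For item (1), I would first note that since $m$ is a fixed point, $\varrho(x)|_m=0$, so the Leibniz rule $\ell_2'(x,fe)=f\,\ell_2'(x,e)+\varrho(x)[f]\,e$ makes $\ell_2'(x,\cdot)|_m$ tensorial; hence $\nu(x)$ is a genuine endomorphism of the fibre, and by Lemma \ref{class} it preserves $\mathfrak{g}_m=\ker\rho_m$. Feeding $x\in\mathfrak g[1]$ and $w,e\in\mathfrak g_m$ into the Jacobi identity at $m$ (all terms containing $\ell_1$ or $\ell_3\circ\ell_1$ drop out because $\ell_1|_m=0$) yields, up to Koszul signs, the derivation rule $\nu(x)[w,e]_{\mathfrak g_m}=[\nu(x)w,e]_{\mathfrak g_m}+[w,\nu(x)e]_{\mathfrak g_m}$. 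A derivation of a Lie algebra preserves its centre, so $\nu(x)$ restricts to $Z(\mathfrak g_m)$. That this is a $\mathfrak g$-module structure is then Lemma \ref{class}(a): under the centrality hypothesis the correction term $\ell_2(\cdot,\eta(x,y))|_m$ is the inner derivation $\mathrm{ad}_{\eta(x,y)|_m}$, which vanishes on all of $\mathfrak g_m$, leaving $\nu([x,y]_\mathfrak g)=[\nu(x),\nu(y)]$. For intrinsicness I would identify $\nu$ under the canonical isomorphism $\mathfrak g_m\cong\mathcal F(m)/\mathcal I_m\mathcal F$: anchor compatibility $\rho\circ\ell_2'(x,\cdot)=[\varrho(x),\rho(\cdot)]$ shows $\nu(x)$ is the class of $[\varrho(x),\cdot]$, visibly independent of $(E,Q)$ and of $\Phi$; replacing $\varrho$ by an equivalent action shifts this by $\mathrm{ad}$ of an element of $\mathfrak g_m$, an inner derivation that dies on $Z(\mathfrak g_m)$.

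Item (2) is then a direct translation of Lemma \ref{class}(b). By item (1), $\eta|_m\in\mathrm{Hom}(\wedge^2\mathfrak g,Z(\mathfrak g_m))$ is a legitimate $2$-cochain for the module just built. Writing out the Chevalley--Eilenberg coboundary and using the antisymmetry of $\eta|_m$, one checks that $(d^{\mathrm{CE}}\eta|_m)(x,y,z)$ equals the cyclic sum $\nu(x)\eta|_m(y,z)-\eta|_m([x,y]_\mathfrak g,z)+\circlearrowleft(x,y,z)$, which is exactly the expression that Lemma \ref{class}(b) declares to be zero. Hence $d^{\mathrm{CE}}\eta|_m=0$.

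For item (3) I would separate the choices. First, for fixed $\varrho$ and fixed $(E,Q)$, two admissible $\eta$ differ by a map into $\ker\rho=\ell_1(\Gamma(E_{-2}))$, which vanishes at $m$ by minimality, so $\eta|_m$ is literally unchanged. Second, for a different universal Lie $\infty$-algebroid or a different lift $\Phi$, I invoke Theorem \ref{main}(2) and Campos's theorem \cite{CamposRicardo}: the lifts are homotopy equivalent over $\mathrm{id}_M$, and the homotopy operator $\bar Q\circ H_t+H_t\circ Q_\mathfrak g$, projected to arity $(-1)$ and evaluated at $m$, realises precisely the Chevalley--Eilenberg coboundary of its primitive (the $H_t\circ Q_\mathfrak g$ term supplies the bracket of $\mathfrak g$ and $\bar Q\circ H_t$ the $\nu$-action), so $\eta|_m$ changes by a $d^{\mathrm{CE}}$-coboundary. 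Third, for an equivalent action $\varrho'=\varrho+\rho\circ\beta$ still fixing $m$ (which forces $\beta(x)|_m\in\mathfrak g_m$), I would substitute into the defining relation for $\eta'$, use $\rho\circ\nabla_x=\mathrm{ad}_{\varrho(x)}\circ\rho$ and $\rho\circ\ell_2=\mathrm{ad}\circ(\rho\times\rho)$, and evaluate at $m$ (again killing the $\ker\rho$ ambiguity) to obtain $\eta'|_m-\eta|_m=-d^{\mathrm{CE}}(\beta|_m)-[\beta|_m,\beta|_m]_{\mathfrak g_m}$. The hard part is exactly this residual bracket term: one must verify, using centrality of $\eta|_m$ and $\eta'|_m$, that it assembles with the coboundary so as to leave $[\eta|_m]$ fixed; I regard the conceptual homotopy argument of the second case as the safer footing and would use it to underwrite this explicit computation.

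Finally, item (4) follows from item (3). If $\varrho$ is equivalent to a strict action $\sigma$, then $\sigma([x,y]_\mathfrak g)-[\sigma(x),\sigma(y)]=0=\rho(0)$, so for $\sigma$ the admissible choice $\eta_\sigma\equiv0$ gives $\eta_\sigma|_m=0$, representing the zero class. Since by item (3) the class $[\eta|_m]\in H^2(\mathfrak g;Z(\mathfrak g_m))$ depends only on the equivalence class of $\varrho$, we get $[\eta|_m]=[\eta_\sigma|_m]=0$, i.e. $\eta|_m$ is exact. The one point to handle with care is that the strict representative be chosen keeping $m$ fixed, so that its isotropy data coincide with those of $\varrho$; I would arrange this by taking the equivalence $\varphi\in\mathrm{Hom}(\mathfrak g,\mathcal F(m))$, or else read off exactness directly from the explicit formula of the third case with $\eta_\sigma|_m=0$.
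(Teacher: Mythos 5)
Your overall route coincides with the paper's: item (1) via the Jacobi identity of $\ell_2'$ at $m$ (valid because $\ell_1|_m=0$) plus Lemma \ref{class}(a) with the centrality hypothesis killing the correction term; item (2) as a direct translation of Lemma \ref{class}(b); items (3)--(4) via the explicit variation of $\eta$ under $\varrho\mapsto\varrho+\rho\circ\beta$. Your intrinsic identification of $\nu(x)$ with the class of $[\varrho(x),\cdot\,]$ on $\mathcal F(m)/\mathcal I_m\mathcal F$ is a nice addition that the paper leaves implicit, and your observation that two admissible $\eta$'s for fixed data differ by a section of $\ker\rho=\ell_1(\Gamma(E_{-2}))$, hence agree at $m$ by minimality, is exactly the right reason that $\eta|_m$ itself is well defined.

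There is, however, one point you explicitly leave open that the paper closes in one line, and your proposed fallback does not actually close it. In the variation formula
\begin{equation*}
\eta'(x,y)=\eta(x,y)+\beta([x,y]_\mathfrak{g})-\ell_2'(x,\beta(y))+\ell_2'(y,\beta(x))-\ell_2(\beta(x),\beta(y)),
\end{equation*}
the quadratic term at $m$ is $\ell_2(\beta(x),\beta(y))|_m=[\beta(x)|_m,\beta(y)|_m]_{\mathfrak g_m}$, and it vanishes because $\beta|_m$ is valued in $Z(\mathfrak g_m)$ --- this is how the equivalence at a fixed point is set up (for a fixed point of both actions one gets $\beta(x)|_m\in\ker\rho_m=\mathfrak g_m$, and the paper takes $\beta|_m$ valued in the center). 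The centrality you invoke, namely that of $\eta|_m$ and $\eta'|_m$, is not the relevant one and does not by itself kill a bracket of two $\beta$-terms. Deferring instead to the homotopy-operator argument does not rescue this: that argument is only sketched in your proposal (you do not verify that the arity $(-1)$ projection of $\bar Q\circ H_t+H_t\circ Q_\mathfrak g$ at $m$ is exactly a Chevalley--Eilenberg coboundary for the module of item (1)), and it is not the paper's route either; the paper proves items (3) and (4) purely from the displayed formula, which after the centrality observation reduces to $\eta'|_m-\eta|_m=\dd^{CE}(\beta|_m)$. With that one sentence supplied, your argument is complete and matches the paper's.
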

\begin{proof}
{$(E, \ell_1)$ being minimal at $m$,  $\ell_2'|_m$ satisfies the Jacobi identity. In particular,  for every $x\in \mathfrak g[1]$, $\ell_2'(x,\cdot\,)|_m$ preserves $Z(\mathfrak{g}_m)$. By item $(a)$ of Lemma \ref{class}, the restriction of the $2$-ary bracket 
\[\ell_2'\colon\mathfrak{g}\otimes Z(\mathfrak{g}_m)\longrightarrow Z(\mathfrak{g}_m)\] endows $Z(\mathfrak{g}_m)$ with a $\mathfrak{g}$-module structure, since $\ell_2(\cdot,\,\eta(x,y))|_m=0$ by assumption. It is easy to see that if we change the action $\varrho$ to $\varrho+\rho\circ\beta$  for some vector bundle morphism $\beta\colon\mathfrak{g}\longrightarrow E_{-1}$ such that $\beta|_m\colon\mathfrak{g}\longrightarrow Z(\mathfrak{g}_m)$, the new $2$-ary bracket between sections of $\mathfrak{g}[1]$ and $E_{-1}$ constructed as in the  proof of Theorem \ref{alt-thm-res} is modified by $(x,e)\mapsto\ell_2'(x,e)+\ell_2(\beta(x),e)$. The second term of the latter vanishes at $m$, by definition of $\beta|_m$. As a result, the action of $\ell_2'$ on $Z(\mathfrak{g}_m)$ does not depend on the choices made in the construction. This proves  item ${1}$.}

{Item ${2}$ follows from item (b) of Lemma \ref{class} that tells that $\eta_{|_m}\colon\wedge^2\mathfrak{g}\longrightarrow Z(\mathfrak{g}_m)$ is a $2$-cocycle for the Chevalley-Eilenberg complex of $\mathfrak{g}$ valued in $Z(\mathfrak{g}_m)$.}

{Let $\varrho'$ be a weak symmetry action of $\mathfrak{g}$ on $\mathcal{F}$ which is equivalent to $\varrho$, i.e., there exists a vector bundle morphism $\beta\colon\mathfrak{g}\longrightarrow E_{-1}$ (with $\beta|_m\colon\mathfrak{g}\longrightarrow Z(\mathfrak{g}_m)$) such that $\varrho'(x)=\varrho(x)+\rho(\beta(x))$ for all $x\in\mathfrak{g}$. Let $\eta'\colon\wedge^2\mathfrak{g}\longrightarrow E_{-1}$ be such that $\varrho'([x,y]_\mathfrak{g})-[\varrho'(x),\varrho'(y)]=\rho(\eta'(x,y))$ for all $x,y\in\mathfrak{g}$. Following the constructions in the proof of Theorem \ref{alt-thm-res}, this implies that\begin{equation}\label{eq:eta-exa}
    \eta'(x,y)=\eta(x,y)+\beta([x,y]_\mathfrak{g})- \ell_2'(x,\beta(y))+\ell'_2(y,\beta(x))-\ell_2(\beta(x),\beta(y))),\quad \text{for all $x,y\in\mathfrak{g}$}.
\end{equation}
Equation \eqref{eq:eta-exa} implies that $\eta'(x,y)|_m-\eta(x,y)|_m=\dd^{CE}(\beta|_m)(x,y)$, where $\dd^{CE}$ stands for the Chevalley-Eilenberg differential. As a consequence, $\eta'|_m$ and $\eta|_m$ define the same class in the Chevalley-Eilenberg complex of $\mathfrak{g}$ valued in $Z(\mathfrak{g}_m)$. This proves item $3$ and $4$.}
\end{proof}

\begin{remark}
Even when ${\ell_2}_{|_m}\neq 0$, we can have some obstruction, but they are not given by cohomology classes because they are not given by linear equations. More precisely, it is obvious that the weak symmetry action $\varrho$ is not equivalent to strict one if the Maurer-Cartan-like equation \eqref{eq:eta-exa} has no solution $\beta$ with ${\eta'}_{|_m}=0$.
\end{remark}

The following is a direct consequence of Proposition \ref{Prop:class}.
\begin{corollary}\label{prop:class}
Let $m\in M$ be a fixed point for the $\mathfrak g$-action $\varrho$. 
Assume that the isotropy Lie algebra $\mathfrak{g}_m$ of $\mathcal F$ at $m$ is Abelian. Then

 \begin{enumerate}
    \item $\mathfrak{g}_m$ is a $\mathfrak{g}$-module.
    \item The bilinear map, $\eta_{|_m}\colon\wedge^2\mathfrak g\to \mathfrak{g}_m$, is a Chevalley-Eilenberg  $2$-cocycle of $\mathfrak g$ valued in $\mathfrak{g}_m$. \item Its  class $\emph{cl}(\eta|_m)\in H^2(\mathfrak g,\mathfrak{g}_m)$ does not depend on the choices made in the construction.
    \item Furthermore, $\emph{cl}(\eta|_m)$ is an obstruction of having a strict symmetry action equivalent to ~$\varrho$.
\end{enumerate}
\end{corollary}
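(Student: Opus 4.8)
The plan is to observe that this corollary is nothing but the Abelian specialization of Proposition \ref{Prop:class}, so the entire task reduces to checking that the two hypotheses of that proposition are satisfied automatically once $\mathfrak{g}_m$ is assumed to be Abelian. First I would fix a universal Lie $\infty$-algebroid $(E,Q)$ of $\mathcal F$ whose linear part $(E,\ell_1)$ is minimal at $m$; this is always possible by the remark preceding Lemma \ref{class} (citing Proposition 4.14 in \cite{LLS}), which moreover identifies the isotropy Lie algebra $\mathfrak g_m$ with $\ker\rho_m$. This supplies the first hypothesis of Proposition \ref{Prop:class} at no cost.

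Next I would verify the second hypothesis, namely that $\eta(x,y)|_m$ is valued in the center $Z(\mathfrak g_m)$. Since $m$ is a fixed point we have $\varrho(x)|_m=0$ for every $x\in\mathfrak g$, so evaluating the defining identity \eqref{eq:def:eta} at $m$ yields $\rho_m(\eta(x,y)|_m)=0$, whence $\eta(x,y)|_m\in\ker\rho_m\cong\mathfrak g_m$. Because $\mathfrak g_m$ is Abelian, the restriction of $\ell_2$ to two degree $-1$ elements vanishes at $m$, so $Z(\mathfrak g_m)=\mathfrak g_m$ (the content of the footnote in Proposition \ref{Prop:class}). Therefore $\eta(x,y)|_m\in\mathfrak g_m=Z(\mathfrak g_m)$ for all $x,y\in\mathfrak g$, which is exactly what the second hypothesis asks.

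With both hypotheses in place, I would simply invoke Proposition \ref{Prop:class}, reading each of its four conclusions with $Z(\mathfrak g_m)$ replaced throughout by the full isotropy algebra $\mathfrak g_m$: item (1) is the $\mathfrak g$-module structure on $\mathfrak g_m$ induced by $\ell_2'(x,\cdot)$; item (2) is the Chevalley--Eilenberg $2$-cocycle condition coming from Lemma \ref{class}(b); item (3) is the independence of the class from the chosen representative of $\varrho$ and from the construction, via Equation \eqref{eq:eta-exa}; and item (4) is the obstruction statement, since a strict action equivalent to $\varrho$ forces $\eta|_m$ to be a coboundary, hence $\mathrm{cl}(\eta|_m)=0$. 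I do not expect any genuine obstacle here: the only substantive points are that the Abelian assumption collapses the center onto the whole of $\mathfrak g_m$ and makes the center-valuedness of $\eta|_m$ automatic at a fixed point, after which the four assertions are a verbatim transcription of Proposition \ref{Prop:class}.
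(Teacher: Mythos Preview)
Your proposal is correct and matches the paper's own approach: the paper simply states that this corollary is a direct consequence of Proposition \ref{Prop:class}, and you have correctly spelled out why the Abelian hypothesis on $\mathfrak{g}_m$ forces $Z(\mathfrak{g}_m)=\mathfrak{g}_m$ so that both hypotheses of that proposition are automatically met at a fixed point. Your verification is more detailed than the paper's one-line justification, but the route is identical.
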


\begin{example}
We return to Example \ref{ex:isotropy} and consider the $\mathfrak{g}^k_m$-action on $\mathcal{I}^{k+1}_m\mathcal{F}$. Every point $m\in M$ is a fixed point for the $\mathfrak{g}^k_m$-action of item ${2}$ of Example \ref{ex:isotropy}. Since the isotropy Lie algebra $\mathfrak{g}^k_m$ is Abelian for every $k\geq 2$ the following assertions hold by Corollary \ref{prop:class}:

\begin{enumerate}
\item For each $k\geq 1$, the vector space $\mathfrak{g}_m^{k+1}$ is a $\mathfrak{g}_m^{k}$-module.
\item The obstruction of having a strict symmetry action equivalent to $\varrho_k$ is a Chevalley-Eilenberg cocycle valued in $\mathfrak{g}_m^{k+1}$.
    \end{enumerate}
\end{example}
Here is a particular case of this example.
\begin{example}
Let $\mathcal{F}:=\mathcal{I}_0^3\mathfrak{X}(\mathbb{R}^n)$ be the singular foliation generated by vector fields vanishing to order $3$ at the origin. The quotient $\mathfrak{g}:=\frac{\mathcal{I}_0^2\mathfrak{X}(\mathbb{R}^n)}{\mathcal{I}_0^3\mathfrak{X}(\mathbb{R}^n)}$ is a trivial Lie algebra. There is a weak symmetry action of $\mathfrak{g}$ on $\mathcal{F}$ which assigns to an element in $\mathfrak{g}$ a representative in $\mathcal{I}^2_0\mathfrak{X}(\mathbb{R}^n)$. In this case, the isotropy Lie algebra of $\mathcal{F}$ at zero is Abelian  and $\ell'_2(\mathfrak{g},\mathfrak{g}_0)_{|_0}=0$. Thus, the  action  of $\mathfrak{g}$ on $\mathfrak{g}_0$ is trivial. One can choose $\eta\colon\wedge^2\mathfrak{g}\longrightarrow \mathfrak{g}_0$ such that $\eta\left(\overline{x_i^2\frac{\partial}{\partial x_i}},\overline{x_i^2\frac{\partial}{\partial x_j}}\right)=2e_{ij}$, with $e_{ij}$ a constant section in a set of generators of degree $-1$ whose image by the anchor is $x_i^3\frac{\partial}{\partial x_j}$. Therefore, $\eta_{|_0}\left(\overline{x_i^2\frac{\partial}{\partial x_i}},\overline{x_i^2\frac{\partial}{\partial x_j}}\right)\neq 0$. This implies that the class of $\eta$ is not zero at the origin. Therefore, by item ${2}$ of Corollary \ref{prop:class} the weak symmetry action of $\mathfrak{g}$ on $\mathcal{F}$ is not equivalent to a strict one.
\end{example}
Also, we have the following consequence of Corollary \ref{prop:class} for Lie algebra actions on affine varieties, as in Example \ref{ex:affine-action}. Before going to Corollary \ref{cor:affine} let us write definitions and some facts.\\

\noindent
\textbf{Settings:} Let $W$ be an affine variety realized as a subvariety of $\mathbb{C}^d$, and defined by some ideal $\mathcal I_W\subset\mathbb{C}[x_1,\ldots,x_d]$. We denote by $\mathfrak{X}(W):=\mathrm{Der}(\mathcal O_W)$ the Lie algebra of vector fields on $W$, where $\mathcal O_W$  is  coordinates ring of $W$. 
\begin{definition}
A point $p\in W$ is said to be $\emph{strongly singular}$ if for all $f\in \mathcal I_W$, $\dd_pf\equiv0$ or equivalently if for all $f\in \mathcal I_W$ and $X\in \mathfrak{X}(\mathbb{C}^d)$, one has $X[f](p)\in \mathcal I_p$.
\end{definition}

\begin{example}
Any singular point of a hypersurface $W$ defined by a polynomial $\varphi\in\mathbb{C}[x_1,\ldots,x_d]$ is strongly singular.
\end{example}
The lemma below is immediate.
\begin{lemma}\label{lemma:strong-sing}
In a strongly singular point, the isotropy Lie algebra of the singular foliation $\mathcal{F}=\mathcal I_W\mathfrak{X}(\mathbb{C}^d)$ is Abelian.
\end{lemma}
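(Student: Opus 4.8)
The plan is to unwind the definition of the isotropy Lie algebra and reduce the claim to a single bracket computation on generators, in which the strongly singular hypothesis is invoked exactly on the ``derivative'' terms. First I would observe that since $p\in W$, every function in $\mathcal{I}_W$ vanishes at $p$, so $\mathcal{I}_W\subseteq \mathcal{I}_p$. Consequently every element of $\mathcal{F}=\mathcal{I}_W\mathfrak{X}(\mathbb{C}^d)$ already vanishes at $p$ (its coefficients lie in $\mathcal I_W\subseteq \mathcal I_p$), and hence the Lie subalgebra $\mathcal{F}(p)=\{X\in\mathcal F\mid X(p)=0\}$ coincides with all of $\mathcal{F}$. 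Therefore $\mathfrak{g}_p=\mathcal F(p)/\mathcal I_p\mathcal F=\mathcal F/\mathcal I_p\mathcal F$, and proving that $\mathfrak{g}_p$ is Abelian amounts to establishing the inclusion $[\mathcal{F},\mathcal{F}]\subseteq \mathcal I_p\mathcal F$.

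Next, since $\mathcal F$ is spanned over $\mathcal O$ by elements of the form $fU$ with $f\in\mathcal I_W$ and $U\in\mathfrak X(\mathbb C^d)$, and the commutator bracket is bilinear and obeys the Leibniz rule, it suffices to check this inclusion on such generators. The key identity is
\[
[fU,gV]=fg\,[U,V]+f\,U[g]\,V-g\,V[f]\,U,\qquad f,g\in\mathcal I_W,\ U,V\in\mathfrak X(\mathbb C^d),
\]
and I would treat the three summands in turn. For the first, writing $fg\,[U,V]=f\cdot\bigl(g[U,V]\bigr)$ with $g[U,V]\in\mathcal F$ and $f\in\mathcal I_W\subseteq\mathcal I_p$ shows $fg\,[U,V]\in\mathcal I_p\mathcal F$; note this term does not require the singularity assumption at all.

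The two remaining terms are precisely where strong singularity enters. Because $g\in\mathcal I_W$ and $p$ is strongly singular, $\dd_p g=0$, so $U[g](p)=\dd_p g\bigl(U(p)\bigr)=0$, i.e.\ $U[g]\in\mathcal I_p$; regrouping $f\,U[g]\,V=U[g]\cdot(fV)$ with $fV\in\mathcal F$ then yields $f\,U[g]\,V\in\mathcal I_p\mathcal F$, and symmetrically $g\,V[f]\,U\in\mathcal I_p\mathcal F$. Summing the three contributions gives $[fU,gV]\in\mathcal I_p\mathcal F$, and by bilinearity $[\mathcal F,\mathcal F]\subseteq\mathcal I_p\mathcal F$, so every bracket in $\mathfrak{g}_p$ vanishes and $\mathfrak{g}_p$ is Abelian. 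I do not anticipate any genuine obstacle: the lemma is essentially a Leibniz-rule bookkeeping argument, and the only points demanding care are the correct grouping of factors in the derivative terms (keeping $fV$, respectively $gU$, inside $\mathcal F$ while sending $U[g]$, respectively $V[f]$, into $\mathcal I_p$) and the justification that reducing to generators of the $\mathcal O$-module $\mathcal F$ is legitimate.
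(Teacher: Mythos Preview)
Your proof is correct. The paper itself gives no argument at all---it just declares the lemma ``immediate'' and moves on---so you have supplied exactly the computation the author presumably had in mind: unwinding $\mathfrak g_p=\mathcal F/\mathcal I_p\mathcal F$ and showing $[\mathcal F,\mathcal F]\subseteq \mathcal I_p\mathcal F$ via the Leibniz identity on products $fU$, $gV$, with strong singularity used precisely to force $U[g],V[f]\in\mathcal I_p$.

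One small wording point: you speak of elements $fU$ as ``generators of the $\mathcal O$-module $\mathcal F$'', but what you actually need (and use) is only that every element of $\mathcal I_W\mathfrak X(\mathbb C^d)$ is a \emph{finite $\mathbb C$-linear sum} of such products, together with $\mathbb C$-bilinearity of the bracket. No $\mathcal O$-module structure or Leibniz reduction over $\mathcal O$ is needed for the inclusion $[\mathcal F,\mathcal F]\subseteq\mathcal I_p\mathcal F$, so this is a cosmetic issue rather than a mathematical gap.
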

{The following corollary answers the question of Example \ref{ex:affine-action}. Here, $\emph{cl}(\eta|_p)$ is as in Corollary \ref{prop:class}.}
\begin{corollary}\label{cor:affine}
Let $\varrho\colon\mathfrak{g}\longrightarrow \mathfrak{X}(W) $ be a Lie algebra morphism.
\begin{enumerate}
    \item Any extension $\widetilde{\varrho}$ as in Example \ref{ex:affine-action} is a weak symmetry action for the singular foliation $\mathcal{F}=\mathcal I_W\mathfrak{X}(\mathbb{C}^d)$.
    
    \item 
    {Let $p\in W$ be a fixed point for the $\mathfrak{g}$-action $\varrho$ which is also a strongly singular point $p$ in $W$. If class $cl(\eta|_p)$ does not vanish, then the Lie algebra morphism $\varrho\colon\mathfrak{g}\longrightarrow \mathfrak{X}(W)$ can not be extended to a Lie algebra morphism $\widetilde \varrho\colon\mathfrak{g}\longrightarrow \mathfrak X(\mathbb C^d)$.}
\end{enumerate}
\end{corollary}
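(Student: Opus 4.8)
The plan is to deduce item (2) from the obstruction theory of Corollary \ref{prop:class}, the decisive observation being that a genuine Lie algebra morphism extending $\varrho$ is nothing but a \emph{strict} representative inside the equivalence class of weak symmetry actions attached to $\varrho$. I would first dispatch item (1), which requires nothing new beyond the computation already performed in Example \ref{ex:affine-action}: any extension $\widetilde{\varrho}\colon\mathfrak{g}\to\mathfrak{X}(\mathbb{C}^d)$ of $\varrho$ is tangent to $W$, so $\widetilde{\varrho}(x)[\mathcal{I}_W]\subset\mathcal{I}_W$ and hence $[\widetilde{\varrho}(x),\mathcal{F}_W]\subset\mathcal{F}_W$; moreover the defect $\widetilde{\varrho}([x,y]_\mathfrak{g})-[\widetilde{\varrho}(x),\widetilde{\varrho}(y)]$ vanishes upon restriction to $W$, where $\widetilde{\varrho}$ restricts to the morphism $\varrho$, so it lies in $\mathcal{F}_W$. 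Thus $\widetilde{\varrho}$ is a weak symmetry action of $\mathfrak{g}$ on $\mathcal{F}_W=\mathcal{I}_W\mathfrak{X}(\mathbb{C}^d)$.

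For item (2) I would argue by contraposition. Since $p\in W$ is strongly singular, Lemma \ref{lemma:strong-sing} guarantees that the isotropy Lie algebra $\mathfrak{g}_p$ of $\mathcal{F}$ at $p$ is Abelian, so Corollary \ref{prop:class} applies and the class $cl(\eta|_p)\in H^2(\mathfrak{g},\mathfrak{g}_p)$ is well defined and independent of all choices. Now suppose, contrary to the desired conclusion, that $\varrho$ does extend to a Lie algebra morphism $\widetilde{\varrho}\colon\mathfrak{g}\to\mathfrak{X}(\mathbb{C}^d)$. Being a genuine morphism, $\widetilde{\varrho}$ satisfies $\widetilde{\varrho}([x,y]_\mathfrak{g})=[\widetilde{\varrho}(x),\widetilde{\varrho}(y)]$ on the nose, not merely modulo $\mathcal{F}_W$; combined with $[\widetilde{\varrho}(x),\mathcal{F}_W]\subset\mathcal{F}_W$ from item (1), this says exactly that $\widetilde{\varrho}$ is a \emph{strict} symmetry action of $\mathfrak{g}$ on $\mathcal{F}_W$ in the sense of Definition \ref{def:sym}.

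The final step is to certify that this strict action lies in the same equivalence class as the weak action used to define $\eta$. By Example \ref{ex:affine-action} any two extensions of $\varrho$ differ by a map valued in $\mathcal{F}_W$, hence are equivalent weak symmetry actions, and $\widetilde{\varrho}$ is itself such an extension. Therefore $\widetilde{\varrho}$ is a strict symmetry action equivalent to $\varrho$, and item (4) of Corollary \ref{prop:class} then forces $cl(\eta|_p)=0$. Reading this contrapositively yields precisely the assertion of item (2): a nonvanishing class obstructs the existence of a morphism extension.

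The one place I expect to need a careful word is the identification of "$\varrho$ extends as a Lie algebra morphism to $\mathbb{C}^d$" with "the equivalence class of weak actions contains a strict member." The point is that equivalent weak symmetry actions differ by vector fields in $\mathcal{F}_W$, which vanish on $W$, so any strict representative of the class restricts on $W$ to the same map $\varrho$ and is thus genuinely an extension; conversely a morphism extension is a strict representative. Once this bookkeeping is stated, the argument is a direct invocation of the already-established obstruction class, and I anticipate no further difficulty.
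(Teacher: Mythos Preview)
Your proof is correct and follows essentially the same approach as the paper's own proof, which simply notes that item (1) is Example \ref{ex:affine-action} and that item (2) follows from Lemma \ref{lemma:strong-sing} together with Corollary \ref{prop:class}. You have merely unpacked the contrapositive argument and made explicit the identification between ``$\varrho$ extends as a Lie algebra morphism'' and ``the equivalence class of weak actions contains a strict representative,'' which the paper leaves to the reader.
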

\begin{proof}
{This first item follows from Example \ref{ex:affine-action}. By Lemma \ref{lemma:strong-sing}, the isotropy Lie algebra of $\mathcal{F}=\mathcal I_W\mathfrak{X}(\mathbb{C}^d)$ is Abelian in every strongly singular points of an affine variety $W$. Thus, item 2 of Corollary \ref{cor:affine} follows from Corollary \ref{prop:class}.}
\end{proof}

Let us give examples of Lie algebra actions on an affine variety that do not extend to the ambient space.
\begin{example}

Let $W\subset\mathbb{C}^2$ be the affine variety generated by the polynomial $\varphi=FG$ with $F,G \in\mathbb{C}[x,y]=:\mathcal{O}$. We consider the vector fields $U=F\mathcal{X}_G,\, V=G\mathcal{X}_F\in\mathfrak X(\mathbb{C}^2)$, where $\mathcal{X}_F$ and $\mathcal{X}_G$ are Hamiltonian vector fields w.r.t the Poisson structure $\{x,y\}:=1$. Note that $U, V$  are tangent to $W$, i.e. $U[\varphi], V[\varphi]\in \langle \varphi \rangle$. It is easily checked that $[U,V]=\varphi \mathcal{X}_{\{F,G\}}$.

The action of the trivial Lie algebra  $\mathfrak{g}=\mathbb{R}^2$ on $W$ that sends its canonical basis $(e_1, e_2)$ to $U$, and $V$ respectively, is a weak symmetry action on the singular foliation $\mathcal{F}^\varphi:=\langle\varphi\rangle\mathfrak{X}(\mathbb{C}^2)$ of vector fields vanishing on $W$, and induces a Lie algebra map, \begin{equation}\label{eq:LA-Mor-AV}
   \varrho\colon\mathfrak g\longrightarrow \mathfrak{X}(W)
.\end{equation}
A universal Lie $\infty$-algebroid of $\mathcal{F}^\varphi$ is a Lie algebroid (see Example 3.19 of \cite{CLRL}) because,
$$ \xymatrix{0\ar[r] & \mathcal{O}\mu \otimes_\mathcal O \X(\mathbb C^2) \ar^{\varphi\frac{\partial}{\partial\mu}\otimes_\mathcal O \text{id}}[rr] & & \mathcal{F}^\varphi}$$
is a $\mathcal{O}$-module isomorphism, ($\mathcal{F}_\varphi$ is a projective module). Here, $\mu$ is a degree $ -1$ variable, so that $\mu^2=0$. The universal algebroid structure over that resolution is given on the set of generators by: \begin{equation}\label{eq:bracket2}
    \ell_2\left(\mu\otimes_\mathcal O \dfrac{\partial}{\partial x},\mu\otimes_\mathcal O \dfrac{\partial}{\partial y}\right):=\frac{\partial\varphi}{\partial x}\,\mu\otimes_\mathcal O \dfrac{\partial}{\partial y}-\frac{\partial\varphi}{\partial y}\,\mu\otimes_\mathcal O \dfrac{\partial}{\partial x}\end{equation} and $\ell_k:=0$ for every $k\geq 3$. Since $\mathcal{X}_{\{F,G\}}=\dfrac{\partial \{F,G\}}{\partial y}\dfrac{\partial }{\partial x}-\dfrac{\partial \{F,G\}}{\partial x}\dfrac{\partial }{\partial y} $, we have \begin{equation}\label{eta:Poisson}
         \eta(e_1,e_2):=\dfrac{\partial \{F,G\}}{\partial y}\,\mu\otimes_\mathcal{O}\dfrac{\partial}{\partial x}-\dfrac{\partial \{F,G\}}{\partial x}\,\mu\otimes_\mathcal{O}\dfrac{\partial}{\partial y}.\end{equation}
Take, for example,  $F(x,y)= y-x^2$ and $G(x,y)=y+x^2$. The isotropy Lie algebra $\mathfrak{g}_{(0,0)}$ of $\mathcal{F}^\varphi$ is Abelian by Equation \eqref{eq:bracket2}, i.e. $\ell_2|_{(0,0)}=0$. By Corollary \ref{prop:class} (1), $\mathfrak{g}_{(0,0)}$ is a $\mathbb{R}^2$-module. A direct computation shows that the action on $\mathfrak{g}_{(0,0)}$ is not trivial, but takes values in $\mathcal{O}\, \mu\otimes_\mathcal O \dfrac{\partial}{\partial x}$. Besides, Equation \eqref{eta:Poisson} applied to $\{F,G\}=4x$ gives \begin{equation}\label{eta:Poisson2}
            \eta(e_1,e_2)=-4\,\mu\otimes_\mathcal O \dfrac{\partial}{\partial y}.
\end{equation}
\noindent
If $\eta_{|_{(0,0)}}$ were a coboundary of Chevalley-Eilenberg, we would have (in the notations of Proposition \ref{Prop:class}) that \begin{equation}\label{eq:nonzero-class}
    \eta(x,y)_{|_{(0,0)}}=\beta([x,y]_\mathfrak{\mathbb{R}^2})- \ell_2'(x,\beta(y))+\ell'_2(y,\beta(x))\in \mathcal{O}\, \mu\otimes_\mathcal O \dfrac{\partial}{\partial x},\quad \text{for all $x,y\in \mathfrak{g}$}
\end{equation} for some linear map\footnote{Here, $\ell_2'$ is as Proposition \ref{Prop:class}.} $\beta\colon  \mathfrak{g}\longrightarrow \mathfrak{g}_{(0,0)}$. Therefore, Equation \eqref{eq:nonzero-class} has no solution in view of Equation \eqref{eta:Poisson2}, since $\eta_{|_{(0,0)}}\neq 0$. In other words, its class $\mathrm{cl}(\eta)$ does not vanish at $(0,0)$. By Corollary \ref{cor:affine} (2), the action $\varrho$ given in Equation \eqref{eq:LA-Mor-AV} cannot be extended to the ambient space.
\end{example}

\section{Bi-submersion towers and symmetries}
We end the paper by introducing the notion \textquotedblleft   bi-submersion towers\textquotedblright. The work contained in this section is entirely original, except for the notion given in Definition \ref{def:tower} that arose in a discussion between C. Laurent-Gengoux, L.~ Ryvkin, and I, and will be the object of a separate study.
\subsection{Definitions and existence}Let us firstly recall the  definition of bi-submersion introduced in \cite{AndroulidakisIakovos}.

\begin{definition}
Let $M$ be a manifold endowed with a singular foliation $\mathcal F $.
A \emph{bi-submersion} $\xymatrix{B\ar@/^/[r]^{s}\ar@/_/[r]_{t}&M}$ over $\mathcal F$ is a triple $(B,s,t) $ where:
\begin{itemize}
    \item $B$ is a manifold,
    \item $s,t\colon B \to M$ are 
    submersions, respectively called \emph{source} and \emph{target},
\end{itemize}
such that the pull-back singular foliations $ s^{-1} \mathcal F$ and $t^{-1} \mathcal F $ are both equal to the space of vector fields of the form $\xi+\zeta $ with $\xi\in \Gamma(\ker (\dd s)) $ and $\zeta \in \Gamma(\ker (\dd t))$. Namely,

\begin{equation}
\label{eq:defbisub}
  s^{-1} \mathcal F=t^{-1} \mathcal F = \Gamma(\ker (\dd s))+\Gamma(\ker (\dd t)).
\end{equation}
In that case, we also say that $(B,s,t)$ is a bi-submersion over $(M, \mathcal{F})$.
\end{definition}

\begin{example}\label{ex:holonomy-biss}
Let $\mathcal{F}$ be a singular foliation on a manifold $M$. Let $x\in M$. Let $X_1,\ldots, X_n$ be vector fields in $\mathcal{F}$ whose class in $\mathcal{F}_x:=\mathcal{F}/\mathcal{I}_x\mathcal{F}$ generate the latter. We know from  \cite{AndroulidakisIakovos} that there is an open neighborhood $\mathcal{W}$ of $(x, 0)\in M \times \mathbb R^n$ such that $(\mathcal{W}, t, s)$ is a bi-submersion over $\mathcal{F}$, here\begin{equation}
    s(x,y)=x\quad  \text{and}\quad \displaystyle{t (x, y)=\exp_x\left(\sum_{i=1}^ny_iX_i\right)}=\varphi_1^{\sum_{i=1}^ny_iX_i}(x)
\end{equation}
where for $X\in\mathfrak X(M)$,\;$\varphi^X_1$ denotes the time-1 flow of $X$. 
 Such bi-submersions are called \emph{path holonomy bi-submersions} \cite{AndroulidakisIakovos2}.
 \end{example}

Now we can introduce the following definition.
\begin{definition}\label{def:tower}
A \emph{bi-submersion tower over a singular foliation $\mathcal{F}$ on $M$} is a (finite or infinite) sequence of manifolds and maps as follows
\begin{equation}\label{eq:tower}\mathcal{T}_B:\xymatrix{\cdots\phantom{X}\ar@/^/[r]^{s_{i+1}}\ar@/_/[r]_{t_{i+1}}&{B_{i+1}}\ar@/^/[r]^{s_{i}}\ar@/_/[r]_{t_{i}}&{B_{i}}\ar@/^/[r]^{s_{i-1}}\ar@/_/[r]_{t_{i-1}}&{\cdots}\ar@/^/[r]^{s_1}\ar@/_/[r]_{t_1}&B_1\ar@/^/[r]^{s_0}\ar@/_/[r]_{t_0}&B_0,}
\end{equation}
together with a sequence $\mathcal F_i $ of singular foliations on $B_i$, with the convention that $B_0=M$ and $\mathcal F_0 = \mathcal F $, such that
\begin{itemize}
\item for all $i \geq 1$, $\mathcal F_i\subset\Gamma(\ker ds_{i-1})\cap\Gamma(\ker dt_{i-1})$, 
\item for each $i\geq 1$, $\xymatrix{{B_{i+1}}\ar@/^/[r]^{s_{i}}\ar@/_/[r]_{t_{i}}&{B_{i}}}$ is a bi-submersion over $\mathcal F_i$.
    
\end{itemize}
A bi-submersion tower over $(M,\mathcal F) $ shall be denoted as $(B_{i+1},s_i,t_i,\mathcal F_i)_{i \geq 0}$.
The bi-submersion tower over $\mathcal{F}$ in \eqref{eq:tower} is said to be of \emph{of length $n\in\mathbb{N}$} if $B_j=B_n, s_j=t_j=\mathrm{id}\;$ and $\mathcal{F}_j=\{0\}$ for all $j\geq n$.
\end{definition}

\begin{remark}\label{Rmk:Geom_consequences}
Let us spell out some consequences of the axioms. For $i\geq 1$,
two points $b,b'\in B_i$ of the same leaf of $\mathcal F_i$ satisfy $s_{i-1}(b)=s_{i-1}(b')$ and $t_{i-1}(b)=t_{i-1}(b')$. Also, for all $b\in B_i,\; T_b\mathcal{F}_i\subset (\ker ds_{i-1})_{|_b}\cap(\ker dt_{i-1})_{|_b}$.
\end{remark}

Let us explain how such towers can be constructed out of a singular foliation. Let $\mathcal{F}$ be a singular foliation on $M$. Then, 
\begin{enumerate}
\item By Proposition 2.10 in \cite{AndroulidakisIakovos}, there always exists a bi-submersion $\xymatrix{B_1\ar@/^/[r]^{s_0}\ar@/_/[r]_{t_0}&M}$ over $\mathcal F$.
\item The $C^\infty(B_1)$-module $\Gamma(\ker ds_{0})\cap\Gamma(\ker dt_{0}) $ is closed under Lie bracket. When it is locally finitely generated, it is a singular foliation on $B_1$.  
Then, it admits a bi-submersion $\xymatrix{B_2\ar@/^/[r]^{s_1}\ar@/_/[r]_{t_1}&B_1}$.
We now have obtained the two first terms of a bi-submersion tower.
\item We can then continue this construction provided that $\Gamma(\ker ds_{1})\cap\Gamma(\ker dt_{1}) $ is locally finitely generated as a $C^\infty(B_2)$-module, and that it is so at each step\footnote{In the real analytic case, the module $\Gamma(\ker ds_1)\cap\Gamma(\ker dt_1)$ is locally finitely generated because of the noetherianity of the ring of germs of real analytic functions \cite{FrischJacques,Yum-TongSiu}.}. 
\end{enumerate}

\begin{definition}
A bi-submersion tower  $ \mathcal{T}_B=(B_{i+1},s_i,t_i,\mathcal F_i)$  over $ (M,\mathcal F)$ is called \emph{exact bi-submersion tower over $ (M,\mathcal F)$} when $ \mathcal F_{i+1}= \Gamma(\ker ( ds_i))\cap  \Gamma(\ker ( dt_i))$ for all $i \geq 0$.
It is called a \emph{path holonomy bi-submersion tower} (resp. \emph{path holonomy atlas bi-submersion tower}) if  $\xymatrix{B_{i+1}\ar@/^/[r]^{s_i}\ar@/_/[r]_{t_i}&B_i}$ is a path holonomy bi-submersion (resp. an Androulidakis-Skandalis' path holonomy atlas\footnote{See \cite{AndroulidakisIakovos}, Example  3.4 (3).} over $\mathcal F_i $ for each $i \geq 0$. When a path holonomy bi-submersion tower is exact, we speak of \emph{exact path holonomy bi-submersion tower}.
\end{definition}

The following theorem gives a condition which is equivalent to the existence of a bi-submersion tower over a singular foliation. The proof uses Proposition \ref{prop:left-arrows} and Lemma \ref{lemma:invariant-vector-fields} which are stated in the next section. 
\begin{theorem}\label{thm:equivalence}
Let $\mathcal F$ be a singular foliation on $M$. The following items are equivalent:
\begin{enumerate}
    \item $\mathcal F$ admits a geometric resolution.
    \item There exists an exact path holonomy bi-submersion tower over $(M, \mathcal F)$.
\end{enumerate}
\end{theorem}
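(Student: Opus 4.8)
The plan is to prove both implications through a single bridge: for a path holonomy bi-submersion built from generators of a foliation, the bi-vertical vector fields $\Gamma(\ker \dd s)\cap\Gamma(\ker \dd t)$ encode, along the unit section, the kernel of the anchor attached to those generators. Concretely, for the path holonomy bi-submersion $(\mathcal{W},s,t)$ of Example \ref{ex:holonomy-biss} associated with generators $X_1,\dots,X_n$ of $\mathcal{F}$, the unit section reads $x\mapsto(x,0)$; there $(\ker \dd s)_{(x,0)}=\{0\}\times\mathbb{R}^n$ and $\dd t_{(x,0)}$ sends $\partial/\partial y_i$ to $X_i(x)$, so that $(\ker \dd s\cap\ker \dd t)_{(x,0)}$ is canonically $\ker(\rho_x)$, where $\rho\colon\mathcal{O}^n\to\mathcal{F}$, $e_i\mapsto X_i$, is the anchor of the chosen generating set. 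Lemma \ref{lemma:invariant-vector-fields} is what upgrades this fiberwise identity to a statement about the module $\Gamma(\ker \dd s)\cap\Gamma(\ker \dd t)$, and Proposition \ref{prop:left-arrows} is what lets one lift a relation among the $X_i$ to an $s$- and $t$-vertical vector field on $\mathcal{W}$.

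For $(1)\Rightarrow(2)$ I would build the tower recursively. Fix a geometric resolution $\cdots\stackrel{\dd}{\longrightarrow}E_{-2}\stackrel{\dd}{\longrightarrow}E_{-1}\stackrel{\rho}{\longrightarrow}\mathcal{F}$ and a local frame of $E_{-1}$, which yields generators of $\mathcal{F}$ and hence a path holonomy bi-submersion $(B_1,s_0,t_0)$ over $\mathcal{F}$ by Example \ref{ex:holonomy-biss}. The inductive heart is the claim that $\mathcal{F}_1:=\Gamma(\ker \dd s_0)\cap\Gamma(\ker \dd t_0)$ is again locally finitely generated and admits the one-step shift $\cdots\to E_{-3}\to E_{-2}$ as a geometric resolution: the relations parametrized by $\Gamma(E_{-2})$ lift, via Proposition \ref{prop:left-arrows}, to generators of $\mathcal{F}_1$, and exactness of the original complex beyond degree $-1$ furnishes the higher syzygies. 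Granting this, local finite generation guarantees the existence of a path holonomy bi-submersion over $\mathcal{F}_1$, and iterating produces an exact path holonomy tower in which each $\mathcal{F}_i$ carries the truncated resolution $\cdots\to E_{-(i+2)}\to E_{-(i+1)}$.

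For the converse $(2)\Rightarrow(1)$ I would read a geometric resolution off the tower by restricting the $s$-vertical bundles to the iterated unit section. Let $\iota_{i+1}\colon M\hookrightarrow B_{i+1}$ be the composite of the unit sections of the first $i+1$ floors and set $E_{-(i+1)}:=\iota_{i+1}^*(\ker \dd s_i)$, a genuine vector bundle on $M$ since each $s_i$ is a submersion. The restriction of $\dd t_0$ to $(\ker \dd s_0)|_M$ is a bundle map $\rho\colon E_{-1}\to TM$ with image $\mathcal{F}$, and each $\dd t_i$ restricts, along $\iota_{i+1}$, to a map $E_{-(i+1)}\to E_{-i}$; by Lemma \ref{lemma:invariant-vector-fields} these maps assemble into a complex
$$\cdots\stackrel{\dd}{\longrightarrow}E_{-2}\stackrel{\dd}{\longrightarrow}E_{-1}\stackrel{\rho}{\longrightarrow}TM.$$
Its exactness is precisely the exactness of the tower: the defining equality $\mathcal{F}_{i+1}=\Gamma(\ker \dd s_i)\cap\Gamma(\ker \dd t_i)$ forces the image of each differential to coincide with the kernel of the preceding one, which is the defining property of a geometric resolution.

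The main obstacle, shared by both directions, is to control the module $\Gamma(\ker \dd s)\cap\Gamma(\ker \dd t)$ of a path holonomy bi-submersion \emph{globally} rather than just along the unit section: away from $(x,0)$ the computation of $\ker \dd t$ involves the differential of the time-one flow $\exp_x(\sum_i y_iX_i)$, so the clean fiberwise identification with $\ker\rho$ must be promoted to a statement about finitely generated modules and their generators. This is exactly what Lemma \ref{lemma:invariant-vector-fields} and Proposition \ref{prop:left-arrows} are designed to supply; once they are available, local finite generation of the bi-vertical foliation propagates up the tower and the shifted-resolution bookkeeping closes the recursion in both directions.
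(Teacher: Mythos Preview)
Your approach is essentially the paper's: the same recursive construction for $(1)\Rightarrow(2)$ via Proposition \ref{prop:left-arrows} and Lemma \ref{lemma:invariant-vector-fields}, and the same pull-back along iterated unit sections for $(2)\Rightarrow(1)$. Two points need tightening before the argument is complete.

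First, in $(1)\Rightarrow(2)$ you write that $\mathcal{F}_1$ ``admits the one-step shift $\cdots\to E_{-3}\to E_{-2}$ as a geometric resolution'', but $\mathcal{F}_1$ is a foliation on $B_1$ while the $E_{-i}$ are bundles on $M$; as written this is a type mismatch. The paper resolves this by pulling the complex back along $\varphi=t_0\circ\cdots\circ t_i\colon B_{i+1}\to M$ and observing that exactness at the section level survives because $C^\infty(B_{i+1})$ is a flat $C^\infty(M)$-module. You need this step explicitly: the anchored bundle over $\mathcal{F}_1$ is $t_0^*E_{-2}$, and the surjection onto $\Gamma(\ker ds_0)\cap\Gamma(\ker dt_0)$ comes from restricting the map $R\colon\Gamma(t_0^*E_{-1})\to\Gamma(\ker ds_0)$ to $\ker(t_0^*\rho)$.

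Second, for $(2)\Rightarrow(1)$, the fact that the restricted maps $dt_i$ form a complex and that exactness of the tower yields exactness of the pulled-back sequence on $M$ is the content of Lemma \ref{prop:sequence} together with Remark \ref{rmk:pull-back-on-M}, not of Lemma \ref{lemma:invariant-vector-fields}. The latter is only used in the forward direction to produce the surjection $R$.
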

\begin{convention}\label{conven:proj}For a submersion $\phi\colon M\longrightarrow N$ and a smooth map $ \psi\colon M\longrightarrow N$, we denote by ${}^\psi\Gamma(\ker d\phi)$ the space of $\psi$-projectable vector fields in $\Gamma(\ker d\phi)\subset \mathfrak{X}(M)$.\end{convention}
\begin{proof}
\noindent
$(1)\Rightarrow (2)$ : Assume that $\mathcal F$ admits a geometric resolution $(E, \dd, \rho)$. 
In particular, $(E_{-1}, \rho)$ is an anchored bundle over $\mathcal{F}$. {We need to show by recursion on $i\geq 0$ that $\Gamma(\ker ds_i
    )\cap\Gamma(\ker dt_i
    )$ is locally finitely generated because $\ker\dd^{(i+1)}$ or $\ker\rho$ is locally  finitely generated. We actually repeat at each step  $i\geq 0$, the general fact  that the pull-back complex of vector bundle by the submersion $\varphi=t_0\circ t_1\circ \cdots \circ t_i\colon B_{i+1}\longrightarrow M$ 
\begin{equation}\label{eq:pull-back-geo-resol}
         \xymatrix{\cdots\ar[r]& \varphi ^*E_{-i-3}\ar[r]^{\tiny{\varphi ^*\dd^{(i+3)}}}&\varphi ^*E_{-i-2}\ar[r]^{\tiny{\varphi ^*\dd^{(i+2)}}}&\varphi ^*E_{-i-1}\ar[r]& TB_{i+1} }
     \end{equation} remains exact at the sections level (at degree\footnote{We shall  understand that the degree of elements of $\varphi ^*E_{-i-j}$ is $-j$.} $\leq -1$), since $C^\infty(B_{i+1})$ is a flat $C^\infty(M)$-module. In addition, for $i\geq 1,$ the complex \eqref{eq:pull-back-geo-resol} defines a geometric resolution of $\mathcal{F}_{i+1}$.}
    
Let $(B_1, s_0, t_0)$  be a path holonomy bi-submersion over $(M,\mathcal{F})$. Consider the map 
\begin{align}\label{eq:induced-map-tower}
    R\colon\Gamma(t^*_0E_{-1})&\longrightarrow {}^{t_0}\Gamma(\ker ds_0
    )\subset\mathfrak{X}(B_1)\\\nonumber t^*_0e\;&\longmapsto \overrightarrow{e}
\end{align}
defined as in Proposition \ref{prop:left-arrows}. {By Lemma \ref{lemma:invariant-vector-fields} (1), the map $R$ in \eqref{eq:induced-map-tower}  comes from a vector bundle morphism $t_0^* E_{-1} \longrightarrow \ker ds_0$ 
 and is surjective on an open subset $V_1\subset B_1$, by item $2$ of Lemma \ref{lemma:invariant-vector-fields}}. 
 In particular the map $R$ in \eqref{eq:induced-map-tower} restricts to a surjective map
\begin{align}\label{eq:restriction-map-tower}
    \ker(t_0^*\rho)&\longrightarrow \ker\left( dt_0|_{\Gamma(\ker ds_0)}\right)=\Gamma(\ker ds_0
    )\cap\Gamma(\ker dt_0
    )\subset\mathfrak{X}(B_1)
\end{align}
By exactness in degree $-1$, $\ker\rho=\dd^{(2)}(\Gamma(E_{-2}))$. Therefore,  $\ker (t^*_0\rho)$ is locally finitely generated. By surjectivity of the map \eqref{eq:restriction-map-tower}, $\Gamma(\ker ds_0
    )\cap\Gamma(\ker dt_0)=:\mathcal{F}_1$ is also locally finitely generated on $V_1$, in particular $\mathcal{F}_1$ is a singular foliation on $V_1\subset B_1$ (we may assume that $B_1=V_1$). Thus, one can take a path holonomy bi-submersion $(B_2,s_1,t_1)$ over $(B_1,\mathcal{F}_1)$. The proof continues the same as the previous step.

    Let us  make a step further for clarity. The composition \begin{equation*}
        \xymatrix{\Gamma(E_{-2})\ar@{.>>}@/_1pc/[rr]\ar@{->}[r]^<<<<{\dd^{(2)}}&\mathrm{im}(\dd^{(2)})=\ker\rho\ar@{->>}[r]&\mathcal{F}_1}
    \end{equation*} together with $E_{-2}$ is an anchored bundle over $\mathcal{F}_1$. Let $\varphi=t_0\circ t_1$.  Just like in the first step,  define the surjective $C^\infty(B_2)$-linear map,
    \begin{align}\label{eq:induced-map-tower2}
    \Gamma(\varphi^*E_{-2})&\longrightarrow {}^{t_1}\Gamma(\ker ds_1
    )\subset\mathfrak{X}(B_2)\\\nonumber \varphi^*e\;&\longmapsto \overleftarrow{e}.
\end{align}
By Lemma \ref{lemma:invariant-vector-fields} (2), the map \ref{eq:induced-map-tower2} restricts (upon taking $B_2$ smaller) to a surjective map
\begin{align}\label{eq:restriction-map-tower2}
    \ker\left(\Gamma(\varphi^*E_{-2})\stackrel{\varphi^*\dd^{(2)}}{\longrightarrow} \Gamma(\varphi^*E_{-1})\right)&\longrightarrow \ker\left( dt_1|_{\Gamma(\ker ds_1)}\right)=\Gamma(\ker ds_1
    )\cap\Gamma(\ker dt_1
    )\subset\mathfrak{X}(B_2)
\end{align}
By exactness in degree $-2$, the $C^{\infty}(M)$-module $$\ker\left(\Gamma(E_{-2})\stackrel{\dd^{(2)}}{\longrightarrow}\Gamma(E_{-1})\right)=\dd^{(3)}(\Gamma(E_{-3}))$$ is (locally) finitely generated, hence $\mathcal{F}_2:=\Gamma(\ker ds_1
    )\cap\Gamma(\ker dt_1
    )$ is a singular foliation on $B_2$. Thus, one can take a path holonomy bi-submersion $(B_3,s_2,t_2)$ over $(B_2,\mathcal{F}_2)$. By recursion on $i\geq 1$, we use a path holonomy bi-submersion $(B_{i+1},s_i,t_i)$ over $(B_i,\mathcal{F}_i)$ and  construct an anchor bundle over $\mathcal{F}_i$  by the composition \begin{equation*}
        \xymatrix{\Gamma(\varphi^* E_{-i-1})\ar@{.>>}@/_1pc/[rr]\ar@{->}[r]^<<<<{\varphi^*\dd^{({i+1})}}&\mathrm{im}(\varphi^*\dd^{(i+1)})=\ker(\varphi^*\dd^{(i)})\ar@{->>}[r]&\mathcal{F}_i}
    \end{equation*} with $\varphi=t_0\circ t_1\circ \cdots \circ t_{i-1}\colon B_{i}\longrightarrow M$ and show as for $i=0, 1$ that $\mathcal{F}_{i+1}:=\Gamma(\ker ds_i
    )\cap\Gamma(\ker dt_i
    )$ is a singular foliation on $B_{i+1}$. The proof follows.\\

\noindent
$(2)\Rightarrow (1)$ is proven by Lemma \ref{prop:sequence} and Remark \ref{rmk:pull-back-on-M} below.
\end{proof}
In the following lemma we deduce out of any bi-submersion tower over a singular foliation, a complex of vector bundles over different base manifolds, and discuss exactness. In Remark \ref{rmk:pull-back-on-M}, we give conditions to have a complex vector bundles over $M$.
\begin{lemma}\label{prop:sequence}
Let $\mathcal{F}$ be a singular foliation on $M$. Assume that there exists a bi-submersion tower $\mathcal{T}_B=(B_i,t_i,s_i, \mathcal{F}_i)_{i\geq 0}$ over $\mathcal F$. Then,
\begin{equation}\label{eq:exa-bisub}
        \xymatrix{
\cdots\ar[r]&\ker ds_2 \ar[d] \ar[r]^{ dt_2}&\ker ds_1\ar[d] \ar[r]^{ dt_1}&\ker ds_0 \ar[d] \ar[r]^{ dt_0} &TM\ar[d]\\
\cdots\ar[r]&B_3 \ar[r]_{t_2}&B_2\ar[r]_{t_1}&B_1 \ar[r]_{t_0} &M.}
    \end{equation}
 is a complex of vector bundles, which is exact at the sections level\footnote{Let us explain the notion of exactness at the level of sections when the base manifolds are not the same: what we mean is that for all $n\geq 0$, $\Gamma(\ker dt_n)\cap \Gamma(\ker ds_{n})$ is equal to the $t_{n+1}$-projectable vector fields in $\Gamma(\ker ds_{n+1})$.
 
 Equivalently, it means that the pull-back of the vector bundles in \eqref{bi-thm:eq} to any one of the manifold $B_m $ with $m \geq n $ is exact at the level of sections, i.e., \begin{equation}\label{complex-projectable}\xymatrix{\Gamma(t_{n+1,m}^* \ker ds_{n+1})\ar[r]^{ dt_{n+1}}&\Gamma(t_{n,m}^* \ker ds_n) \ar[r]^{ dt_n}&\Gamma(t_{n-1,m}^*\ker ds_{n-1}})\end{equation} is a short exact sequence of $C^\infty(B_m) $-modules, with $ t_{n,m}= t_{n} \circ \dots \circ t_{m}$ for all $m \geq n$.
 }
 if $\mathcal{T}_B$ is an exact bi-submersion tower, i.e., if $\mathcal F_i=\Gamma(\ker ds_{i-1})\cap\Gamma(\ker dt_{i-1})$ for all $i\geq 1$.
\end{lemma}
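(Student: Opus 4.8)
The plan is to split the claim into three independent parts: first, that each $dt_i$ genuinely restricts to a vector-bundle morphism $\ker ds_i\to\ker ds_{i-1}$; second, that consecutive restrictions compose to zero, so that \eqref{eq:exa-bisub} is a complex (this much holds for \emph{any} bi-submersion tower); and third, exactness at the sections level, which is the only place the exact-tower hypothesis $\mathcal F_{i+1}=\Gamma(\ker ds_i)\cap\Gamma(\ker dt_i)$ intervenes. I would prove the ``projectable'' formulation of exactness stated in the footnote and then note its equivalence with the pull-back formulation \eqref{complex-projectable}.

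For the first two parts I would read the bi-submersion condition pointwise. Since $(B_{i+1},s_i,t_i)$ is a bi-submersion over $\mathcal F_i$, one has $\Gamma(\ker ds_i)\subseteq\Gamma(\ker ds_i)+\Gamma(\ker dt_i)=t_i^{-1}\mathcal F_i$; by the very definition of the pull-back foliation, every $X\in t_i^{-1}\mathcal F_i$ satisfies $dt_i(X_b)\in T_{t_i(b)}\mathcal F_i$ for all $b$. As sections of $\ker ds_i$ span each fibre, this gives $dt_i\bigl((\ker ds_i)_b\bigr)\subseteq T_{t_i(b)}\mathcal F_i$. By Remark \ref{Rmk:Geom_consequences}, $T_{t_i(b)}\mathcal F_i\subseteq(\ker ds_{i-1})_{t_i(b)}\cap(\ker dt_{i-1})_{t_i(b)}$: the first inclusion makes $dt_i$ a morphism $\ker ds_i\to\ker ds_{i-1}$, and the second yields $dt_{i-1}\circ dt_i=0$ on $\ker ds_i$. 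Hence \eqref{eq:exa-bisub} is a complex.

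For exactness I would establish the surjectivity statement of the footnote: under the exact-tower hypothesis, every $Y\in\Gamma(\ker ds_n)\cap\Gamma(\ker dt_n)=\mathcal F_{n+1}$ is the $t_{n+1}$-pushforward of a $t_{n+1}$-projectable section of $\ker ds_{n+1}$. Since $t_{n+1}$ is a submersion, I would choose locally a $t_{n+1}$-projectable lift $\widetilde Y\in\mathfrak X(B_{n+2})$ of $Y$; by definition of the pull-back foliation $\widetilde Y\in t_{n+1}^{-1}\mathcal F_{n+1}=\Gamma(\ker ds_{n+1})+\Gamma(\ker dt_{n+1})$, so I may write $\widetilde Y=A+B$ with $A\in\Gamma(\ker ds_{n+1})$ and $B\in\Gamma(\ker dt_{n+1})$. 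Because $B$ lies in $\ker dt_{n+1}$ it is $t_{n+1}$-projectable to $0$, whence $A=\widetilde Y-B$ is $t_{n+1}$-projectable with $(t_{n+1})_*A=(t_{n+1})_*\widetilde Y=Y$. Thus $A$ is the desired lift; together with the complex property (every such pushforward already lies in $\mathcal F_{n+1}$) this gives exactness in the first formulation.

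The hard part is bookkeeping rather than conceptual: passing rigorously from the module inclusion $\Gamma(\ker ds_i)\subseteq t_i^{-1}\mathcal F_i$ to the pointwise tangency $dt_i((\ker ds_i)_b)\subseteq T_{t_i(b)}\mathcal F_i$, and checking that the splitting $\widetilde Y=A+B$ keeps all summands $t_{n+1}$-projectable --- which is automatic only because the $\ker dt_{n+1}$-summand always projects to $0$. I would close by recording that the projectable formulation is equivalent to the pull-back formulation \eqref{complex-projectable}: pulling the complex back along $t_{n,m}=t_n\circ\cdots\circ t_m$ and using flatness of $C^\infty(B_m)$ over the relevant function rings turns the projectable surjectivity into exactness of \eqref{complex-projectable} as sequences of $C^\infty(B_m)$-modules, which is exactly what Remark \ref{rmk:pull-back-on-M} then restricts to $M$ to complete the implication $(2)\Rightarrow(1)$ of Theorem \ref{thm:equivalence}.
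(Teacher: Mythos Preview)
Your proposal is correct and follows essentially the same route as the paper's own proof: both establish the complex property pointwise from $\Gamma(\ker ds_i)\subset t_i^{-1}\mathcal F_i$ together with $T\mathcal F_i\subset\ker ds_{i-1}\cap\ker dt_{i-1}$, and both obtain exactness by lifting $\xi\in\Gamma(\ker ds_{i-1})\cap\Gamma(\ker dt_{i-1})=\mathcal F_i$ through the submersion $t_i$ and then splitting the lift as a sum in $\Gamma(\ker ds_i)+\Gamma(\ker dt_i)$. Your added remarks on the equivalence with the pull-back formulation \eqref{complex-projectable} go slightly beyond what the paper spells out, but are consistent with it.
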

\begin{proof}
 For any element $b \in B_{i+1}$ and any vector $v\in \ker ds_i \subset T_{b} B_{i+1}$ one has,\begin{align*}
    & dt_i(v)\in T_{t_i(b)}\mathcal{F}_{i},\quad\text{(since $\Gamma(\ker ds_i)\subset t^{-1}_i(\mathcal{F}_{i})$)}.\\\Longrightarrow\quad&  dt_i(v)\in\left(\ker ds_{i-1}\cap\ker dt_{i-1}\right)|_{t_i(b)},\quad \text{{by Definition \ref{def:tower}}}.\\\Longrightarrow \quad& dt_i(v)\in \ker ds_{i-1}\quad \text{and}\quad dt_{i-1}\circ dt_i (v)=0,\; \text{for all $i\geq 1$}.
\end{align*}
This shows the sequence \eqref{eq:exa-bisub} is a well-defined complex of vector bundles.

Let us prove that it is exact when  $\mathcal F_i=\Gamma(\ker ds_{i-1})\cap\Gamma(\ker dt_{i-1})$ for all $i\geq 1$. Let $\xi\in\Gamma\left(\ker ds_{i-1}\right)$ be a $t_{i-1}$-projectable vector field that projects to zero, i.e.  $ dt_{i-1}(\xi)=0$. This implies that   $\xi\in\Gamma(\ker ds_{i-1})\cap\Gamma(\ker dt_{i-1})=\mathcal F_i$. Since $t_{i}$ is a submersion, there exists a $t_i$-projectable vector field $\zeta\in t_{i}^{-1}(\mathcal{F}_i)$ that satisfies $ dt_{i}(\zeta)=\xi$. The vector field $\zeta$ can be written as $\zeta=\zeta_1+\zeta_2$ with $\zeta_1\in\Gamma\left(\ker dt_{i}\right)$ and $\zeta_2\in\Gamma\left(\ker ds_{i}\right)$, because $t_i^{-1}(\mathcal{F}_i)=\Gamma(\ker ds_{i})+\Gamma(\ker dt_{i})$. One has, $ dt_i(\zeta_2)=\xi$. A similar argument shows that the map, $\Gamma(\ker ds_0)\overset{ dt_0}{\longrightarrow}t^*_0\mathcal{F}$, is surjective. This proves exactness in all degree.
\end{proof}

\begin{remark}\label{rmk:pull-back-on-M}One of the consequence of Lemma \ref{prop:sequence} is that: 
\begin{enumerate}
    \item If there exists a sequence of maps \begin{equation}\label{sequence:sect}
    \xymatrix{M\ar@{>->}[r]^{\varepsilon_0}&B_1\ar@{>->}[r]^{\varepsilon_1}&B_2\,\ar@{>->}[r]^{\varepsilon_2}&\cdots}
\end{equation}where for all $i\geq 0$, $\varepsilon_i$ is a section for both $s_i$ and $t_i$ then the pull-back of \eqref{eq:exa-bisub} on $M$ through the sections $(\varepsilon_i)_{i\geq 0}$ i.e., \begin{equation}\label{eq:comp-vect-bund}
    \xymatrix{\cdots\ar[r]^{dt_3}&\varepsilon_{2,0}^*\ker ds_2\ar[r]^{dt_2}&\varepsilon_{1,0}^*\ker ds_1\ar[r]^{dt_1}&\varepsilon_0^*\ker ds_0\ar[r]^{dt_0}&TM}
\end{equation}
is a complex of vector bundles, with the convention $\varepsilon_{n,0}= \varepsilon_{n}\circ\cdots \circ\varepsilon_{0}$. If $\mathcal{T}_B$ is an exact bi-submersion tower then, \eqref{eq:comp-vect-bund} is a geometric resolution of $\mathcal{F}$.
\item In case that $\mathcal{T}_B$ is an exact path holonomy bi-submersion tower, such a sequence \eqref{sequence:sect} always exists, since the bi-submersions $(B_{i+1}, s_i, t_i)$ are as in Example \ref{ex:holonomy-biss}. For such bi-submersions, the zero section $x\mapsto ( x, 0)$ is a section for both $s_i$ and $t_i$.
\end{enumerate}
\end{remark}

Theorem \ref{thm:equivalence} is now proven. Here is a consequence.
\begin{corollary}\label{cor:exa}
Let $\mathcal{F}$ be a singular foliation on $M$. Assume that there exists an exact  bi-submersion tower $\mathcal{T}_B=(B_i,t_i,s_i, \mathcal{F}_i)_{i\geq 0}$ over $\mathcal F$ of length $n+1$. 
Then, the pull-back of the sequence of vector bundles
  \begin{equation}\label{bi-thm:eq}
      \xymatrix{\ker ds_{n}\ar[rrrrd] \ar[r]^-{ dt_{n}}&t^*_{n}\ker ds_{n-1}\ar[r]\ar[rrrd]&{\;\cdots}\ar[rrd] \ar[r]^-{ dt_2}&t_{2,n}^*\ker ds_1\ar[rd] \ar[r]^-{ dt_1}&TB_{n+1}\times_{TM}\ker ds_0 \ar[d] \ar[r]^-{\mathrm{pr}_1} &TB_{n+1}\ar[ld]\\& & &&B_{n+1}
  \footnote{ $TB_{n+1}\times_{TM}\ker ds_0=\{(u,v)\in TB_{n+1}\times\ker ds_0\mid  dt_{0,n}(u)= dt_0(v)\}$.}
  }
 \end{equation}
    is a geometric resolution of the pull-back foliation $t_{0,n}^{-1}(\mathcal{F})\subset \mathfrak{X}(B_{n+1})$, where  $\mathrm{pr}_1$ is the projection on $TB_{n+1}$ and for $i\geq 1$, $t_{i,j}$ is the composition $t_i\circ\cdots\circ t_j\colon B_{j+1}\to B_{i}$.
\end{corollary}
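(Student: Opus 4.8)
The plan is to obtain \eqref{bi-thm:eq} as the pull-back to the single base $B_{n+1}$ of the complex \eqref{eq:exa-bisub} of Lemma \ref{prop:sequence}, with the last term adjusted by a fibre product so that the anchor takes values in $TB_{n+1}$ rather than in $t_{0,n}^*TM$. Since the tower has length $n+1$, one has $s_j=t_j=\mathrm{id}$ for $j\geq n+1$, hence $\ker ds_{n+1}=0$; this truncates \eqref{eq:exa-bisub} so that it starts at $\ker ds_n$, giving a complex of finite length. Essentially all the analytic content is already contained in Lemma \ref{prop:sequence}, so the corollary is a repackaging of that statement over one fixed base, and I would present it that way.

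First I would check that \eqref{bi-thm:eq} is a complex. The relations $dt_{i-1}\circ dt_i=0$ for $2\leq i\leq n$ are exactly the complex property established in the proof of Lemma \ref{prop:sequence}, and they are preserved by the pull-backs along the submersions $t_{i,n}$. For the penultimate arrow I would define $dt_1\colon t_{2,n}^*\ker ds_1\to TB_{n+1}\times_{TM}\ker ds_0$ by $w\mapsto (0,dt_1(w))$; this lands in the fibre product because $dt_0\circ dt_1=0$, and then $\mathrm{pr}_1\circ dt_1=0$ is automatic. Thus all composites vanish and \eqref{bi-thm:eq} is a genuine complex of vector bundles over $B_{n+1}$.

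Next I would identify the anchor. A section of $TB_{n+1}\times_{TM}\ker ds_0$ is a pair $(X,v)$ with $dt_{0,n}(X)=dt_0(v)$, and its image under $\mathrm{pr}_1$ is $X$. By the surjectivity of $\Gamma(\ker ds_0)\xrightarrow{dt_0}t_0^*\mathcal F$ proved in Lemma \ref{prop:sequence}, the section $dt_0(v)$ ranges precisely over $t_{0,n}^*\mathcal F$ after pull-back, while the choice $v=0$ forces $X\in\Gamma(\ker dt_{0,n})$. Hence the image of $\mathrm{pr}_1$ is generated by the $t_{0,n}$-vertical vector fields together with the $t_{0,n}$-projectable lifts of $\mathcal F$, which is exactly the pull-back foliation $t_{0,n}^{-1}(\mathcal F)$ (Definition 1.9 in \cite{AndroulidakisIakovos}). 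This settles the degree-zero part of the statement.

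Finally I would establish exactness in the negative degrees, where I expect the only real bookkeeping. Lemma \ref{prop:sequence} gives exactness at the level of sections for \eqref{eq:exa-bisub} in the precise sense spelled out in its footnote, namely that the pull-back of these bundles to any $B_m$ with $m\geq n$ yields short exact sequences \eqref{complex-projectable} of $C^\infty(B_m)$-modules. Taking $m=n$, so that the base is $B_{n+1}$, gives exactness of \eqref{bi-thm:eq} at each interior node $t_{i+1,n}^*\ker ds_i$; at the leftmost node injectivity of $dt_n$ follows from $\ker ds_{n+1}=0$. The one node needing separate attention is the fibre-product spot: here $\ker(\mathrm{pr}_1)$ consists of the pairs $(0,v)$ with $v\in\Gamma(\ker ds_0)\cap\Gamma(\ker dt_0)=\mathcal F_1$, while the image of $dt_1$ is $\{(0,dt_1(w)) : w\in\Gamma(\ker ds_1)\}$; exactness of the tower, $\mathcal F_1=\Gamma(\ker ds_0)\cap\Gamma(\ker dt_0)$, together with the surjectivity $dt_1(\Gamma(\ker ds_1))=\mathcal F_1$ from Lemma \ref{prop:sequence}, matches these two subspaces. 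Combining the complex property, the anchor identification, and exactness in every negative degree shows that \eqref{bi-thm:eq} is a geometric resolution of $t_{0,n}^{-1}(\mathcal F)$. The main obstacle is purely organizational: transporting the several-base exactness of Lemma \ref{prop:sequence} to the single base $B_{n+1}$ and reconciling the tail of the complex with the fibre product so that the anchor lands in $TB_{n+1}$.
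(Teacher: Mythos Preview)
Your proposal is correct and follows essentially the same route as the paper: both derive the corollary by invoking Lemma \ref{prop:sequence} for exactness and then identifying the image of $\mathrm{pr}_1$ with the pull-back foliation $t_{0,n}^{-1}(\mathcal F)$. The paper's proof is a two-line sketch, whereas you have usefully unpacked the fibre-product node and the degree-zero anchor computation that the paper leaves implicit.
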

\begin{proof}
By Lemma \ref{prop:sequence}, the complex in Equation \eqref{bi-thm:eq} is exact. By construction, the projection of the fiber product $TB_{n+1}\times_{TM}\ker ds_0$ to $TB_{n+1}$ induces the singular foliation $t_{0,n}^{-1}(\mathcal{F})$. 
\end{proof}

\subsection{Lift of a symmetry to the bi-submersion tower}
Let us investigate what an action   $\varrho\colon\mathfrak g\rightarrow \mathfrak{X}(M)$ of a Lie algebra $\mathfrak{g}$ on a singular foliation $(M,\mathcal F)$ would induce on a bi-submersion tower $\mathcal{T}_B$ over $\mathcal{F}$.\\

We start with some vocabulary and preliminary results.

\begin{definition}Let $(B,s,t)$ be a bi-submersion over a singular foliation $\mathcal F$ on a manifold $M$. We call \emph{lift} of a vector field $X\in\mathfrak X(M)$ to the bi-submersion $(B,s,t)$ a vector field $\widetilde{X}\in \mathfrak{X}(B)$ which is both $s$-projectable on $X$ and $t$-projectable on $X$. 
\end{definition}

The coming proposition means that the notion of lift to a bi-submersion only makes sense for symmetries of the singular foliation.

\begin{prop}\label{lift-symmetry-bi-susbmersion} If a vector field on $M$ admits a lift to a bi-submersion $(B,s,t)$ over a singular foliation $\mathcal F$, then it is a symmetry of $\mathcal F$.
\end{prop}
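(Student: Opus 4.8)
The plan is to exploit the defining property \eqref{eq:defbisub} of a bi-submersion, namely $s^{-1}\mathcal F = t^{-1}\mathcal F = \Gamma(\ker ds) + \Gamma(\ker dt)$, by showing that a lift $\widetilde X$ is an infinitesimal symmetry of the pull-back foliation and that this symmetry then descends to $M$. First I would record the elementary but crucial fact that a $\pi$-projectable vector field preserves $\Gamma(\ker d\pi)$ under the Lie bracket: if $\widetilde X$ is $\pi$-related to $X$ and $V\in\Gamma(\ker d\pi)$ is $\pi$-related to $0$, then $[\widetilde X, V]$ is $\pi$-related to $[X,0]=0$, so $[\widetilde X, V]\in\Gamma(\ker d\pi)$. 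Since by hypothesis $\widetilde X$ is simultaneously $s$-projectable and $t$-projectable onto $X$, applying this to $\pi=s$ and to $\pi=t$ gives $[\widetilde X, \Gamma(\ker ds)]\subseteq \Gamma(\ker ds)$ and $[\widetilde X, \Gamma(\ker dt)]\subseteq\Gamma(\ker dt)$. Adding the two inclusions and using \eqref{eq:defbisub} yields $[\widetilde X, s^{-1}\mathcal F]\subseteq s^{-1}\mathcal F$; that is, $\widetilde X$ is an infinitesimal symmetry of the pull-back foliation $s^{-1}\mathcal F$.

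Next I would transfer this back to $M$. Fix $Y\in\mathcal F$. Because $s$ is a submersion, $Y$ admits an $s$-projectable lift $\widetilde Y\in\mathfrak X(B)$ projecting onto $Y$ (as in Example \ref{ex:pull-back}), and such a lift belongs to $s^{-1}\mathcal F$ by the very definition of the pull-back foliation. The bracket $[\widetilde X,\widetilde Y]$ then lies in $s^{-1}\mathcal F$ by the previous paragraph, and, being the bracket of two $s$-projectable fields, it is itself $s$-projectable, projecting onto $[X,Y]$. Thus everything reduces to the following descent statement: any $s$-projectable vector field lying in $s^{-1}\mathcal F$ projects onto an element of $\mathcal F$.

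I expect this descent step to be the main obstacle, because it requires upgrading a pointwise span condition into genuine membership in the $C^\infty(M)$-module $\mathcal F$. The plan is to argue locally: near a point I would pick generators $Y_1,\dots,Y_n$ of $\mathcal F$ together with $s$-projectable lifts $\widetilde Y_i$, so that $s^{-1}\mathcal F$ is generated over $C^\infty(B)$ by $\Gamma(\ker ds)$ and the $\widetilde Y_i$. Writing an $s$-projectable $Z\in s^{-1}\mathcal F$ as $Z = V + \sum_i f_i\widetilde Y_i$ with $V\in\Gamma(\ker ds)$ and $f_i\in C^\infty(B)$, its projection satisfies $(s_*Z)_{s(b)} = \sum_i f_i(b)\,Y_i(s(b))$ for every $b\in B$. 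Choosing a local section $\sigma$ of the submersion $s$ and setting $\bar f_i := f_i\circ\sigma\in C^\infty(M)$ produces the smooth identity $s_*Z = \sum_i \bar f_i\,Y_i$, which exhibits $s_*Z$ as an honest element of $\mathcal F$. Applying this to $Z=[\widetilde X,\widetilde Y]$ gives $[X,Y]\in\mathcal F$, and since $Y\in\mathcal F$ was arbitrary we conclude $[X,\mathcal F]\subseteq\mathcal F$, i.e.\ $X\in\mathfrak s(\mathcal F)$ is a symmetry of $\mathcal F$.
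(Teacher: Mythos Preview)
Your proof is correct and follows essentially the same route as the paper's: both show that $\widetilde X$ preserves $\Gamma(\ker ds)$ and $\Gamma(\ker dt)$ separately, conclude via \eqref{eq:defbisub} that $\widetilde X$ is a symmetry of $s^{-1}\mathcal F$, and then descend along $s$. The only difference is that the paper compresses your descent argument into the single line ``since $\widetilde X$ projects through $s$ to $X$, $X$ is a symmetry of $\mathcal F$'', treating as standard the fact that an $s$-projectable symmetry of the pull-back foliation pushes down to a symmetry of $\mathcal F$; you have simply spelled this out.
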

\begin{proof}
Let $\widetilde{X}\in\mathfrak{X}(B)$ be a lift of $X\in\mathfrak{X}(M)$. Since $\widetilde{X}$  is $s$-projectable, $[\widetilde{X},\Gamma(\ker \dd s)]\subset\Gamma(\ker \dd s)$. Since $\widetilde{X}$  is $t$-projectable, $[\widetilde{X},\Gamma(\ker \dd t)]\subset\Gamma(\ker \dd t)$. Hence:
\begin{align*} [\widetilde{X},s^{-1}(\mathcal{F})] &= [\widetilde{X},\Gamma(\ker \dd s)+\Gamma(\ker \dd t)] \\ &= [\widetilde{X},\Gamma(\ker (\dd s)]+[\widetilde{X},\Gamma(\ker \dd t)]  \\ &\subset  \Gamma(\ker \dd s)+\Gamma(\ker \dd t) =s^{-1}(\mathcal{F}) .\end{align*}
In words, $\widetilde{X}$ is a symmetry of $s^{-1} \mathcal F$.
Since $\widetilde{X}$ projects through $s$ to $X$, $X$ is a symmetry of ~$\mathcal F $.
\end{proof}

We investigate the existence of lifts of symmetries of $ \mathcal F$ to bi-submersions over $\mathcal F $.

\begin{remark}\label{rmk:uniqueness} For $X, Y\in\mathfrak{s}(\mathcal F)$, 
\begin{enumerate}
    \item the lift $\widetilde{X}$ to a given bi-submersion is not unique, even when it exists. However, two different lifts of a $X\in\mathfrak s(\mathcal{F})$ to a bi-submersion $(B,s,t)$ differ by an element of the intersection $\Gamma(\ker (d s))\cap\Gamma(\ker (d t))$. 
    \item the lift $\widetilde X$ is a symmetry of $\Gamma(\ker (d s))\cap\Gamma(\ker (d t))$, i.e., $[\widetilde{X}, \Gamma(\ker (d s))\cap\Gamma(\ker (d t))]\subset \Gamma(\ker (d s))\cap\Gamma(\ker (d t))$,  since $\widetilde{X}$ is $s$-projectable and $t$-projectable.
    \item If the lifts  $\widetilde{X}$ and $\widetilde{Y}$ exist, then $\widetilde{[X,Y]}$ exists and   \begin{equation}
   \widetilde{[X,Y]}-[\widetilde{X},\widetilde{Y}]\in \Gamma(\ker\dd s)\cap\Gamma(\ker\dd t).
\end{equation}
\end{enumerate}
\end{remark}

As the following example shows, the lift of a symmetry  to a bi-submersion may not exist.

\begin{example}
Consider the trivial foliation $\mathcal{F}:=\{0\}$ on $M$. For any diffeomorphism $\phi\colon M\longrightarrow M$, $(M,\text{id}, \phi)$ is a bi-submersion over $\mathcal{F}$. Every vector field $X\in\mathfrak{X}(M)$ is a symmetry of $\mathcal{F}$. If it exists, its lift has to be given by $\widetilde{X}=X $ since the source map is the identity. But $\widetilde{X}=X $ is $t$-projectable if and only if
$X$ is $\phi$-invariant. A non $\phi$-invariant vector field $X$ therefore admits no lift to $(M,\text{id}, \phi)$.
\end{example}

However, internal symmetries, i.e., elements in $\mathcal F$ admit lifts to any bi-submersion.

\begin{prop}\label{prop:lift-internal}
Let $(B,s,t)$ be a bi-submersion over a singular foliation $\mathcal F$ on a manifold $M$. 
Every internal symmetry, i.e., every vector field in $X\in\mathcal{F}$, admits a lift $\widetilde{X}\in \mathfrak{X}(B)$ to $(B,s,t)$. {Moreover, $\widetilde{X}$ can be chosen to be of the form $$\widetilde{X}:=X_s^t+X_t^s$$ with $X^t_s\in \Gamma(\ker (\dd s))$ and $X^s_t\in\Gamma(\ker (\dd t))$}. 
\end{prop}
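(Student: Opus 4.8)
The plan is to obtain $\widetilde X$ by producing its $\Gamma(\ker(\dd t))$-part and its $\Gamma(\ker(\dd s))$-part separately, each as a one-sided projectable lift of $X$, and then summing them. The only serious input is the defining bi-submersion identity $s^{-1}\mathcal F = t^{-1}\mathcal F = \Gamma(\ker(\dd s)) + \Gamma(\ker(\dd t))$, which is what upgrades an arbitrary one-sided lift into one that is automatically annihilated by the opposite differential.

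First I would use that $s\colon B\longrightarrow M$ is a submersion to choose any $s$-projectable vector field $Y\in\mathfrak X(B)$ with $\dd s(Y)=X$; such a $Y$ exists by the usual argument (a connection on the submersion, or a partition of unity). Since $X\in\mathcal F$, the field $Y$ lies in the pull-back foliation $s^{-1}\mathcal F$ by the very definition of the pull-back (Definition 1.9 in \cite{AndroulidakisIakovos}). Invoking the bi-submersion property I then write $Y=a+X_t^s$ with $a\in\Gamma(\ker(\dd s))$ and $X_t^s\in\Gamma(\ker(\dd t))$; applying $\dd s$ and using $a\in\ker(\dd s)$ gives $\dd s(X_t^s)=\dd s(Y)=X$. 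Thus $X_t^s\in\Gamma(\ker(\dd t))$ is an $s$-projectable lift of $X$. Swapping the roles of $s$ and $t$, I would pick a $t$-projectable lift $Z$ of $X$; then $Z\in t^{-1}\mathcal F=\Gamma(\ker(\dd s))+\Gamma(\ker(\dd t))$, so $Z=X_s^t+d$ with $X_s^t\in\Gamma(\ker(\dd s))$ and $d\in\Gamma(\ker(\dd t))$, and applying $\dd t$ yields $\dd t(X_s^t)=X$, so $X_s^t\in\Gamma(\ker(\dd s))$ is a $t$-projectable lift of $X$.

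Finally I would set $\widetilde X:=X_s^t+X_t^s$ and verify the two projection conditions directly: since $X_s^t\in\ker(\dd s)$ and $X_t^s\in\ker(\dd t)$, one computes $\dd s(\widetilde X)=\dd s(X_t^s)=X$ and $\dd t(\widetilde X)=\dd t(X_s^t)=X$, so $\widetilde X$ is a lift of $X$ of exactly the asserted form $X_s^t+X_t^s$. I do not anticipate a genuine obstacle: the existence of one-sided projectable lifts is standard for submersions, and the membership $Y\in s^{-1}\mathcal F$ is immediate from $X\in\mathcal F$. The entire content is carried by the bi-submersion identity, which is applied twice in the two symmetric directions; no higher-order or cohomological considerations enter, in contrast with the lift results of Section \ref{sec:3}.
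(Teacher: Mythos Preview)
Your proof is correct and follows essentially the same approach as the paper: pick an $s$-projectable lift and a $t$-projectable lift of $X$, use the bi-submersion identity to decompose each into a $\Gamma(\ker(\dd s))$-part and a $\Gamma(\ker(\dd t))$-part, and sum the two cross-pieces. The only difference is notational---the paper calls your $Y,Z$ by $X^s,X^t$ and retains all four pieces of the two decompositions before discarding the two unneeded ones.
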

\begin{proof}
Let $X\in\mathcal{F}$. Since $s\colon B\longrightarrow M $ is a submersion, there exists $X^s\in\mathfrak{X}(B)$ $s$-projectable on $X$. 
Since $t $ is a submersion, there exists $X^t\in\mathfrak{X}(B)$ $t$-projectable on $X$.
By construction $X^s \in s^{-1}(\mathcal F) $
 and  $X^t \in t^{-1}(\mathcal F) $.
 Using the property \eqref{eq:defbisub} of the bi-submersion $(B,s,t)$, the vector fields  $X^s$ and $X^t$ decompose as $$\begin{cases}
X^s=X^s_s+X^s_t\quad \text{with\; $X^s_s\in \Gamma(\ker (\dd s))$,\;$X^s_t\in\Gamma(\ker (\dd t))$},\\X^t=X^t_s+X^t_t\quad \text{with\; $X^t_s\in \Gamma(\ker (\dd s))$,\;$X^t_t\in\Gamma(\ker (\dd t))$}.
\end{cases}$$
By construction, $ X^s_t  $ is $s$-projectable to $X$ and $t$-projectable to $0$ while $X^t_s  $ is $s$-projectable to $0$ and $t$-projectable to $X$.
It follows that, $\widetilde{X}:=X_s^t+X_t^s$, is a lift of $X$ to the bi-submersion $(B,s,t)$. 
\end{proof}


{The proof we give for Theorem \ref{thm:equivalence} uses the notion of left-invariant, resp. right-invariant, vector fields on a bi-submersion over a singular foliation. We define the latter in the next proposition. It uses the notion of anchored bundle over a singular foliation and almost Lie algebroid, see \cite{LLS, LLL} for more details. } 

\begin{proposition}\label{prop:left-arrows}
Let $(B,s,t)$ be a bi-submersion over a singular foliation $\mathcal F$ on a manifold $M$. Let $(A, \rho)$ be an anchored bundle over $\mathcal{F}$, i.e., $A\longrightarrow M$ is a vector bundle and $\rho\colon A\longrightarrow TM$ is a vector bundle morphism such that $\rho(\Gamma(A))=\mathcal F$. There exists two maps
\begin{equation}
\label{def:gauchedroite}
     \begin{array}{rcl} \Gamma(A) & \longrightarrow & \mathfrak X(B) \\ a & \longmapsto & \overleftarrow{a}\\  a & \longmapsto & \overrightarrow{a}\end{array}
\end{equation}
fulfilling the following conditions:
\begin{enumerate}
\item the vector field $\overrightarrow{a}\in \mathfrak{X}(B)$
(resp. $\overleftarrow{a}\in \mathfrak{X}(B)$) is $t$-related (resp. $s$-related) with $\rho(a) \in~\mathcal{F}
$,
    \item the vector field $\overrightarrow{a}$
(resp. $\overleftarrow {a}$) is tangent to the fibers of $s$ (resp. $t$),
\item $\overrightarrow{fa}=t^*(f)\overrightarrow{a}$ and $\overleftarrow{fa}=s^*(f)\overleftarrow{a}$ for all $a\in \Gamma(A),\,f\in C^\infty(M)$.
\end{enumerate}

{The vector fields $\overleftarrow{a}$ (resp. $\overrightarrow{a}$) for $a\in\Gamma(A)$ are called \emph{left-invariant} (resp. \emph{right-invariant}) vector fields of $(B,s,t)$.}
\end{proposition}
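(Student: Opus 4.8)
The plan is to construct $\overrightarrow{(\cdot)}$ (the case of $\overleftarrow{(\cdot)}$ being entirely symmetric, exchanging the roles of $s$ and $t$) not vector field by vector field, but as a single vector bundle morphism $\beta\colon t^*A\longrightarrow \ker(\dd s)$ over $B$ whose composition with $\dd t\colon \ker(\dd s)\to TM$ recovers the anchor. Once such a $\beta$ is available, I set $\overrightarrow{a}:=\beta(t^*a)$ for $a\in\Gamma(A)$. Conditions $(1)$ and $(2)$ then hold tautologically, since $\beta$ takes values in $\ker(\dd s)$ and $\dd t\circ\beta=\rho$; and condition $(3)$ is immediate from $C^\infty(B)$-linearity of a bundle morphism together with $t^*(fa)=t^*(f)\,t^*(a)$, which gives $\overrightarrow{fa}=\beta(t^*(f)\,t^*a)=t^*(f)\,\beta(t^*a)=t^*(f)\overrightarrow{a}$. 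Thus everything reduces to the existence of $\beta$.

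First I would build $\beta$ locally. Choose a locally finite open cover $\{U_\alpha\}$ of $M$ over which $A$ is trivial, with local frames $e^\alpha_1,\dots,e^\alpha_r\in\Gamma(A|_{U_\alpha})$. Over $t^{-1}(U_\alpha)\subset B$ it suffices to lift each $\rho(e^\alpha_i)\in\mathcal{F}$ to a vector field that is tangent to the $s$-fibers and $t$-projects onto $\rho(e^\alpha_i)$. Such a lift exists by the defining property of the bi-submersion: since $t$ is a submersion there is a $t$-projectable vector field $Y_i$ on $t^{-1}(U_\alpha)$ with $\dd t(Y_i)=\rho(e^\alpha_i)$; as $Y_i$ projects onto an element of $\mathcal{F}$ it lies in $t^{-1}\mathcal{F}=\Gamma(\ker\dd s)+\Gamma(\ker\dd t)$, so I may write $Y_i=Y_i^s+Y_i^t$ with $Y_i^s\in\Gamma(\ker\dd s)$ and $Y_i^t\in\Gamma(\ker\dd t)$. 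Then $Y_i^s\in\Gamma(\ker\dd s)$ and $\dd t(Y_i^s)=\dd t(Y_i)=\rho(e^\alpha_i)$, so $\overrightarrow{e^\alpha_i}:=Y_i^s$ is the desired lift. Declaring $\beta_\alpha(t^*e^\alpha_i):=\overrightarrow{e^\alpha_i}$ and extending $C^\infty(t^{-1}(U_\alpha))$-linearly defines a bundle morphism $\beta_\alpha\colon t^*A\to\ker\dd s$ over $t^{-1}(U_\alpha)$ satisfying $\dd t\circ\beta_\alpha=\rho$.

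Finally I would glue. The sets $t^{-1}(U_\alpha)$ cover $B$; let $\{\chi_\alpha\}$ be a partition of unity on $B$ subordinate to this cover and set $\beta:=\sum_\alpha\chi_\alpha\beta_\alpha$, which is a bundle morphism $t^*A\to\ker\dd s$ because each fibre value is a linear combination of elements of the linear space $(\ker\dd s)_b$. Here is the one point that needs care, and the main (mild) obstacle: the decomposition $Y_i=Y_i^s+Y_i^t$ is not unique when $\Gamma(\ker\dd s)\cap\Gamma(\ker\dd t)\neq 0$, so the $\beta_\alpha$ need not agree on overlaps. This causes no trouble precisely because all of them cover the same anchor: for $v\in(t^*A)_b=A_{t(b)}$ one has $\dd t_b(\beta_\alpha(v))=\rho_{t(b)}(v)$ independently of $\alpha$, whence $\dd t_b(\beta(v))=\sum_\alpha\chi_\alpha(b)\,\rho_{t(b)}(v)=\rho_{t(b)}(v)$. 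Thus $\beta$ still takes values in $\ker\dd s$ and satisfies $\dd t\circ\beta=\rho$, which is all that was required. The left-invariant map $\overleftarrow{(\cdot)}$ is constructed identically, interchanging $s$ and $t$ and using $s^{-1}\mathcal{F}=\Gamma(\ker\dd s)+\Gamma(\ker\dd t)$.
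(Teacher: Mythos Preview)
Your proof is correct and follows essentially the same approach as the paper: construct the map locally by choosing a trivialization of $A$, lifting each frame section via the bi-submersion decomposition $t^{-1}\mathcal F=\Gamma(\ker\dd s)+\Gamma(\ker\dd t)$, extending $C^\infty$-linearly to a bundle morphism $t^*A\to\ker\dd s$, and then gluing with a partition of unity. Your treatment is in fact slightly more careful than the paper's on one point: you explicitly address the non-uniqueness of the splitting $Y_i=Y_i^s+Y_i^t$ and explain why the glued map still satisfies $\dd t\circ\beta=\rho$, whereas the paper leaves this implicit.
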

\begin{proof}
By Proposition \ref{prop:lift-internal}, given a section $a\in\Gamma(A)$ the vector field $\rho(a)\in\mathcal{F}$ admits a lift $\widetilde {\rho(a)}\in \mathfrak{X}(B)$ on $(B,s,t)$ of the form $$\widetilde{\rho(a)}:=\rho(a)_s^t+\rho(a)_t^s$$ with $\rho(a)^t_s\in \Gamma(\ker (\dd s))$ and $\rho(a)^s_t\in\Gamma(\ker (\dd t))$. Also, $dt(\rho(a)_s^t)=\rho(a)$ and  $ds(\rho(a)_t^s)=\rho(a)$. Let $b\in B$ and $\mathcal{U}_b$ an open neighborhood of $b$. Let $(a_1,\ldots,a_r)$ be a local trivialization of $A$ on the open subset $\mathcal U=s(\mathcal{U}_b)\subset M$.  We define a map $R_U$ on local generators by 
\begin{align}\label{R-inv:equation}
    R_U\colon\Gamma_{\mathcal{U}_b}(t^*A)&\longrightarrow \Gamma_{\mathcal{U}_b}(\ker (\dd s))\\\nonumber t^*a_i\;&\longmapsto \rho(a_i)^t_s
\end{align}
and extend by $C^\infty(\mathcal U_b)$-linearity. 
These maps can be glued using partitions of unity. More precisely, let $(\chi_\lambda)_{\lambda\in \Lambda}$ be a partition of unity subordinate
to an open cover $(\mathcal{U}_\lambda)_{\lambda\in \Lambda}$ by open sets that trivialize the vector bundle $A$. We define a map $R$ on $\Gamma(t^*A)$ as $$\displaystyle{\sum_{\lambda\in \Lambda}\chi_\lambda R_{\mathcal{U}_\lambda}} .$$ 
Now for  $a\in \Gamma(a)$ we define $\overrightarrow{a}:= R(s^*a)$. The map $\overleftarrow{\bullet}$ is defined similarly. Item $1$, $2$ and $3$ hold by construction.

Assume that  $(A,\rho)$ is equipped with an almost Lie algebroid bracket $\lb_A$. For all $a,b\in \Gamma(A)$ one has
\begin{align*}
    ds\left(\overleftarrow{[a,b]_{A}}- [\overleftarrow{a},\overleftarrow{b}]\right)&=\rho([a, b]_{A})-[\rho(a),\rho(b)]\\&=0,\end{align*}because $\overleftarrow{a}$ is $s$-projectable to $\rho(a)$ and $\rho$ is a morphism of brackets. Since left-invariant vector fields are tangent to the fibers of $t$, one has $dt\left(\overleftarrow{[a,b]_{A}}- [\overleftarrow{a},\overleftarrow{b}]\right)=~0$. The proof is similar for $\overrightarrow{[a,b]_{A}}- [\overrightarrow{a},\overrightarrow{b}]$. This ends the proof. 
\end{proof}
The following lemma is important in the proof of Theorem \ref{thm:equivalence}.
\begin{lemma}\label{lemma:invariant-vector-fields}
Let $(B,s,t)$ be any bi-submersion over a singular foliation $\mathcal F$ on a manifold $M$, and $(A,\rho)$ an anchored bundle over $\mathcal{F}$. 
\begin{enumerate}
   \item  {There exists vector bundle morphisms $R\colon t^*A \longrightarrow \ker ds$ 
 and $L\colon s^*A \longrightarrow \ker dt$ inducing \eqref{def:gauchedroite}}.
 
\item  {Let $x\in M$.  If $(B,s,t)$ is a path holonomy bi-submersion over $\mathcal{F}$ near $(x, 0)$  then, every $b\in B$ such that $t(b)=x$  admits a neighborhood $V$ such that every $t$-projectable vector field of $\Gamma_V(\ker ds)$ 
is of the form $R(\xi)$ 
for some $\xi\in \Gamma_{V}(t^*A)$.}
\end{enumerate}
\end{lemma}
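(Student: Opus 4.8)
The plan is to treat the two items separately: item~1 is a formal consequence of the construction in Proposition~\ref{prop:left-arrows}, whereas item~2 carries all the geometric content. For item~1, I would note that the assignments $a\mapsto\overrightarrow{a}$ and $a\mapsto\overleftarrow{a}$ of Proposition~\ref{prop:left-arrows} satisfy $\overrightarrow{fa}=t^*(f)\,\overrightarrow{a}$ and $\overleftarrow{fa}=s^*(f)\,\overleftarrow{a}$ for $f\in C^\infty(M)$ (item~3 there). These are exactly the relations characterising sections of the pullback bundles $t^*A$ and $s^*A$: over a trivialising open set with frame $(a_i)$ of $A$, the rule $R\big(\sum_i g_i\,t^*a_i\big):=\sum_i g_i\,\overrightarrow{a_i}$ (for $g_i\in C^\infty(B)$) is consistent precisely because of $\overrightarrow{fa}=t^*(f)\overrightarrow{a}$, is $C^\infty(B)$-linear, and glues over $B$ by the partition of unity already used in Proposition~\ref{prop:left-arrows}. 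A $C^\infty(B)$-linear map between section modules of finite-rank vector bundles over $B$ is induced by a unique vector bundle morphism; by item~2 of Proposition~\ref{prop:left-arrows} its image lies in $\ker ds$. This yields $R\colon t^*A\to\ker ds$ with $R(t^*a)=\overrightarrow{a}$, and symmetrically $L\colon s^*A\to\ker dt$ with $L(s^*a)=\overleftarrow{a}$, which is \eqref{def:gauchedroite}.

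For item~2, I would first reduce the section-level statement to fibrewise surjectivity of the bundle morphism $R$ near the fibre $t^{-1}(x)$. Fibrewise surjectivity is an open condition, and since $(B,s,t)$ is a path holonomy bi-submersion near $(x,0)$ we may shrink $B$ to an arbitrarily small neighbourhood of $(x,0)$, so that $t^{-1}(x)$ is contained in the open surjectivity locus as soon as $R$ is surjective at $(x,0)$; a surjective bundle morphism admits local splittings, so over a neighbourhood $V$ of any such $b$ every section of $\ker ds$ (in particular every $t$-projectable one) is of the form $R(\xi)$. Thus everything reduces to showing that $R_{(x,0)}\colon A_x\to(\ker ds)_{(x,0)}$ is onto. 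For this I would use the short exact sequence at $(x,0)$
\begin{equation*}
0\longrightarrow(\ker ds\cap\ker dt)_{(x,0)}\longrightarrow(\ker ds)_{(x,0)}\xrightarrow{\;dt\;}T_x\mathcal F\longrightarrow0
\end{equation*}
together with the identity $dt\circ R=t^*\rho$ from item~1 of Proposition~\ref{prop:left-arrows}. On the quotient, $dt(\operatorname{im}R_{(x,0)})=\rho_x(A_x)=T_x\mathcal F$, so $R$ already surjects onto a complement of the vertical isotropy; since moreover $\operatorname{im}R_{(x,0)}\cap(\ker ds\cap\ker dt)_{(x,0)}=R_{(x,0)}(\ker\rho_x)$, surjectivity of $R_{(x,0)}$ is equivalent to the equality $R_{(x,0)}(\ker\rho_x)=(\ker ds\cap\ker dt)_{(x,0)}$.

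The main obstacle is exactly this last equality: the right-invariant vector fields attached to $\ker\rho_x$ must fill out \emph{all} of the vertical isotropy $(\ker ds\cap\ker dt)_{(x,0)}$, not merely a subspace. This fails for a general bi-submersion, and is where the \emph{path holonomy} hypothesis is essential. I would establish it from the explicit local model of Example~\ref{ex:holonomy-biss}: in coordinates $(m,\lambda)$ with $s(m,\lambda)=m$ one has $\ker ds=\operatorname{span}(\partial_{\lambda_i})$ and $dt(\partial_{\lambda_i})|_{(x,0)}=X_i(x)$, and the bi-submersion is built from a \emph{minimal} generating family, so that $\dim(\ker ds\cap\ker dt)_{(x,0)}=\dim\mathcal F_x-\dim T_x\mathcal F=\dim\mathfrak g_x$ (minimality at $(x,0)$, see \cite{AndroulidakisIakovos}). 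Choosing the generators adapted at $x$ so that $X_1(x),\dots,X_p(x)$ is a basis of $T_x\mathcal F$ and $X_{p+1},\dots,X_n\in\mathcal F(x)$ have classes forming a basis of $\mathfrak g_x=\mathcal F(x)/\mathcal I_x\mathcal F$, I would check that the corresponding right-invariant vectors at $(x,0)$ realise, via $w=\sum_j c_j\partial_{\lambda_j}\mapsto[\sum_j c_jX_j]$, the canonical isomorphism $(\ker ds\cap\ker dt)_{(x,0)}\cong\mathfrak g_x$, under which $R_{(x,0)}|_{\ker\rho_x}$ becomes the surjection $a(x)\mapsto[\rho(a)]$ (onto because $\rho(\Gamma(A))=\mathcal F$). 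Combining this with the exact sequence gives $\operatorname{im}R_{(x,0)}=(\ker ds)_{(x,0)}$ and completes item~2.
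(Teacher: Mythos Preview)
Your treatment of item~1 matches the paper's: both simply note that the $C^\infty(B)$-linearity built into Proposition~\ref{prop:left-arrows}(3) is exactly what is needed for the assignment $t^*a\mapsto\overrightarrow{a}$ to come from a vector bundle morphism.

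For item~2 your approach diverges from the paper's and contains a genuine gap. The lemma asserts the conclusion for \emph{every} $b\in B$ with $t(b)=x$, and the fibre $t^{-1}(x)$ is an $n$-dimensional submanifold of $B$ (where $n=\dim\mathcal{F}_x=\mathrm{rk}(\ker ds)$); it is not contained in any small neighbourhood of $(x,0)$. Your step ``we may shrink $B$ to an arbitrarily small neighbourhood of $(x,0)$'' therefore changes the statement: after shrinking you have only established the result for those $b$ close to $(x,0)$, not for all $b$ in the original $t^{-1}(x)\cap B$. Your detailed argument at $(x,0)$ --- via the short exact sequence and the identification $(\ker ds\cap\ker dt)_{(x,0)}\cong\mathfrak{g}_x$ --- is correct and in fact supplies more justification than the paper does at that single point, but it does not cover the general $b$ that the lemma demands.

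The paper proceeds differently and more directly: it works at an arbitrary $b$ with $t(b)=x$, chooses a local frame $a_1,\dots,a_r$ of $A$ near $x$ with $[\rho(a_1)],\dots,[\rho(a_n)]$ a basis of $\mathcal{F}_x$, and asserts that $\overrightarrow{a_1}(b),\dots,\overrightarrow{a_n}(b)$ are linearly independent in the $n$-dimensional space $(\ker ds)_b$, hence a local frame of $\ker ds$ near $b$. The key advantage is that this argument is uniform in $b$. (The paper's justification of the linear independence is admittedly terse; one way to see it is that both $\{\rho(a_i)\}_{i\le n}$ and the defining generators $\{X_j\}$ of the path holonomy bi-submersion have classes forming bases of $\mathcal{F}_x$, so near $x$ one has $\rho(a_i)=\sum_j g_{ij}X_j$ with $(g_{ij}(x))$ invertible, whence a choice of $\overrightarrow{a_i}$ is $\sum_j (g_{ij}\circ t)\,\overrightarrow{X_j}$, and the $\overrightarrow{X_j}$ are the canonical frame of $\ker ds$ from the construction in Example~\ref{ex:holonomy-biss}.) To repair your argument, either carry your exact-sequence analysis to a general $b\in t^{-1}(x)$, or switch to this frame comparison.
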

\begin{remark}
In item 1 of Lemma \ref{lemma:invariant-vector-fields}, by \textquotedblleft\,inducing \eqref{def:gauchedroite}\textquotedblright  we mean that  for every $a\in \Gamma(A)$, $\overrightarrow{a}:=R(t^*a)$ and $\overleftarrow{a}:=L(s^*a)$.
\end{remark}
\begin{proof}
{Item $1$ is obtained in the proof of  Proposition \ref{prop:left-arrows}. For instance, the $C^\infty(B)$-linear map $R\colon \Gamma(t^*A) \longrightarrow \Gamma(\ker ds)$ defined in \eqref{R-inv:equation} corresponds to a morphism of vector bundles  $t^*A \longrightarrow \ker ds$}. Let us prove item ${2}$. By assumption, $B$ is a neighborhood of $(0,x)$ in $M\times \mathbb{R}^n$ with $n=\mathrm{rk}_x(\mathcal{F})=\dim (\mathcal{F}_x:=\mathcal{F}/\mathcal{I}_x\mathcal{F})$  near $x\in M$ (see Example \ref{ex:holonomy-biss}). Let $b\in B$ and $\mathcal{U}_b$ an open neighborhood of $b$. Let $(a_1,\ldots,a_r)$ be a local trivialization of $A$ on the open subset $\mathcal U=t(\mathcal{U}_b)\subset M$.  One has by definition of right-invariant vector fields of $(B,s, t)$ that $dt(\overrightarrow{a_i})=t^*\rho(a_i)$ for $i=1,\ldots, \mathrm{rk}(A)$. The vector fields $\rho(a_i)$ are generators of $\mathcal{F}$ on $\mathcal{U}$. We necessarily have $n\leq\mathrm{rk}(A)$. Since the $\rho(a_i)(x)$'s are generators of $\mathcal{F}_x$, without loss of generality we can assume that $\rho(a_1)(x), \ldots, \rho(a_n)(x)$ is  a basis of $\mathcal{F}_x$. Since $\mathrm{rk}(\ker ds)=n$,  $\left(\overrightarrow{a_i}(b)\right)_{i=1,\ldots, n}$ form a basis of $\ker ds|_x$. Therefore, the $\overrightarrow{a_i}$’s are independent at every point of some neighborhood $V\subset\mathcal{U}_b$ of $b$ i.e., they form a local trivialization of the  vector bundle $\ker ds\longrightarrow B$. As a result, vector fields of $\Gamma_V(\ker ds)$ are of the form $\sum _i^nf_i\overrightarrow{a_i}$ with $f_i\in C^\infty(V)$ for $i=1, \ldots,n$. This ends the proof.
\end{proof}

We can now state one of the important results of this section, {which is the converse of Proposition \ref{lift-symmetry-bi-susbmersion}}. It uses several concepts introduced in \cite{AndroulidakisIakovos}, which are recalled in the proof. 

\begin{prop}\label{prop:lift}
Let $\mathcal{F}$ be a singular foliation on a manifold $M$. Every symmetry $X\in \mathfrak{s}(\mathcal F)$ admits a lift
 \begin{enumerate}
    \item to any path holonomy bi-submersion  $(B,s,t)$,
    \item to Androulidakis-Skandalis' path holonomy atlas,
    \item to a neighborhood of any point in a bi-submersion through which there exists a local bisection that induces the identity.
\end{enumerate}
\end{prop}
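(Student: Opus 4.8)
The plan is to treat the three cases in the order (1), (3), (2), building everything on the explicit path holonomy model of Example \ref{ex:holonomy-biss} together with the right-invariant vector fields of Proposition \ref{prop:left-arrows}. For item (1) I would choose generators $X_1,\dots,X_n$ of $\mathcal F$ near $x_0=s(b_0)$, so that $B\subset M\times\mathbb R^n$ with $s(x,y)=x$ and $t(x,y)=\varphi_1^{\sum_i y_iX_i}(x)$. Let $\hat X$ be the tautological $s$-lift of $X$ (namely $X$ acting on the $M$-coordinates only); it is $s$-projectable to $X$ but not yet $t$-projectable. Differentiating the flow gives $dt(\hat X)_{(x,y)}=\big((\varphi_1^{\sum_i y_iX_i})_*X\big)_{t(x,y)}$, so the defect is measured by $X-(\varphi_1^{Y})_*X$ with $Y=\sum_i y_iX_i$. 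Since $Y\in\mathcal F$ and $X\in\mathfrak s(\mathcal F)$, the identity $X-(\varphi_1^{Y})_*X=-\int_0^1(\varphi_\tau^{Y})_*[X,Y]\,d\tau$ shows this defect lies in $\mathcal F$; expanding $[X,Y]\in\mathcal F$ and pushing the generators forward with the invertible matrix of Remark \ref{rmk:flot} exhibits it as $\sum_l b^l\,X_l$ with $b^l$ \emph{smooth} in $(x,y)$. I would then set $\widetilde X:=\hat X+\sum_l\beta^l\,\overrightarrow{e_l}$, where $\overrightarrow{e_l}$ are the right-invariant vector fields and $\beta^l(x,y)=b^l(t(x,y),y)$. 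Because each $\overrightarrow{e_l}$ is tangent to the $s$-fibres and $t$-related to $X_l$, one checks immediately that $ds(\widetilde X)=X$ and $dt(\widetilde X)=dt(\hat X)+\sum_l\beta^l\,(X_l\circ t)=X\circ t$, so $\widetilde X$ is the desired lift on all of $B$.

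For item (3) I would argue by flows. By Proposition \ref{prop:symm} the local flow $\phi_\epsilon:=\varphi^X_\epsilon$ consists of local symmetries, hence $(B,\phi_\epsilon s,\phi_\epsilon t)$ is again a bi-submersion over $(M,\mathcal F)$ with the same kernels, and the given bisection $\Sigma$ through $b_0$ still induces the identity in it (conjugating the identity by $\phi_\epsilon$ gives the identity). Both bi-submersions therefore carry the identity diffeomorphism through points of $\Sigma$, so the comparison principle for bi-submersions of \cite{AndroulidakisIakovos} provides a smooth family of local isomorphisms $g_\epsilon\colon(B,s,t)\to(B,\phi_\epsilon s,\phi_\epsilon t)$ with $g_0=\mathrm{id}$, normalised so that $g_\epsilon(b_0)$ is the point of $\Sigma$ over $\phi_\epsilon^{-1}(x_0)$ (this is what makes the required source points match). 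Being a morphism, $g_\epsilon$ satisfies $s\circ g_\epsilon=\phi_\epsilon^{-1}\circ s$ and $t\circ g_\epsilon=\phi_\epsilon^{-1}\circ t$; differentiating at $\epsilon=0$ and changing sign yields $\widetilde X:=-\tfrac{d}{d\epsilon}\big|_0 g_\epsilon$ with $ds(\widetilde X)=X\circ s$ and $dt(\widetilde X)=X\circ t$, i.e. a lift near $b_0$. (Alternatively, one may use the same comparison principle to identify $(B,s,t)$ near $b_0$ with a path holonomy bi-submersion near a unit and pull back the lift of item (1) along the resulting local diffeomorphism.)

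For item (2) I would deduce the atlas case from item (1) by functoriality of lifts. A lift is unchanged when source and target are exchanged, so a lift to $(B,s,t)$ is a lift to the inverse bi-submersion $(B,t,s)$; and if $\widetilde X_1,\widetilde X_2$ lift $X$ to $(B_1,s_1,t_1),(B_2,s_2,t_2)$, then, because $ds_1(\widetilde X_1)=X=dt_2(\widetilde X_2)$, the pair $(\widetilde X_1,\widetilde X_2)$ is tangent to the fibre product $B_1\times_{s_1,t_2}B_2$ and descends to a lift of $X$ on the composition. Lifts also pull back along the (local-diffeomorphism) morphisms of the atlas, exactly as in item (3). Since Androulidakis--Skandalis' path holonomy atlas is generated by path holonomy bi-submersions under inverses, compositions and such morphisms, item (1) propagates to a lift on the whole atlas; the residual freedom is controlled by $\Gamma(\ker ds)\cap\Gamma(\ker dt)$ as in Remark \ref{rmk:uniqueness}, so local choices may be glued by a partition of unity, the set of lifts being an affine space.

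The step I expect to be the genuine obstacle is the smoothness in item (1): writing the defect $X-(\varphi_1^{Y})_*X$ as $\sum_l b^l X_l$ with smooth coefficients is precisely where the singular points of $\mathcal F$ could cause trouble, and it is resolved only by the integral formula together with the invertible transition matrix of Remark \ref{rmk:flot}, rather than by any naive pointwise division by generators. In item (3) the analogous delicate point is producing the comparison isomorphisms $g_\epsilon$ as a family depending smoothly on $\epsilon$ with $g_0=\mathrm{id}$; this requires inspecting the flow-based construction of morphisms in \cite{AndroulidakisIakovos} and verifying that it carries the parameter $\epsilon$ smoothly.
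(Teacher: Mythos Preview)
Your arguments for items (1) and (2) are essentially the paper's. For (1) the paper uses the same integral formula $d\varphi_1^Y(X)=X+\int_0^1 d\varphi_t^Y([Y,X])\,dt$ and the same appeal to Remark \ref{rmk:flot} to get smooth coefficients; the only cosmetic difference is that the paper corrects the tautological $s$-lift by choosing any $\widetilde Z_y\in t^{-1}(\mathcal F)$ with $dt(\widetilde Z_y)=Z_y$ and splitting it as $\widetilde Z_y^1+\widetilde Z_y^2$ with $\widetilde Z_y^1\in\Gamma(\ker ds)$, $\widetilde Z_y^2\in\Gamma(\ker dt)$, rather than invoking the right-invariant vector fields of Proposition \ref{prop:left-arrows}. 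For (2) the paper also uses stability under composition and inversion, exactly as you do.

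For item (3) your route is different and heavier than needed. The paper does not build a one-parameter family $g_\epsilon$ of bi-submersion isomorphisms and differentiate; it simply quotes Proposition 2.10(2) of \cite{AndroulidakisIakovos}: if the identity is carried at $v\in B$, there is a \emph{submersion} $g\colon V\to\mathcal W$ onto a path holonomy bi-submersion with $s|_V=s_0\circ g$ and $t|_V=t_0\circ g$. One then takes any $g$-projectable vector field $\widetilde X$ on $V$ over the lift $\widetilde X_0\in\mathfrak X(\mathcal W)$ produced in item (1); since $ds(\widetilde X)=ds_0(\widetilde X_0)=X$ and $dt(\widetilde X)=dt_0(\widetilde X_0)=X$, this is a lift on $V$. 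This is your ``alternative'', except that the comparison map is only a submersion, not a local diffeomorphism; pulling back along a submersion suffices and sidesteps entirely the smoothness-in-$\epsilon$ issue you flag in your primary argument.
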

\begin{proof}[Proof (of Proposition \ref{prop:lift})]
Let $X\in\mathfrak{s}(\mathcal{F})$. Assume that $(B,s,t)=(\mathcal{W}, s_0,t_0)$ is a path holonomy bi-submersion associated to some generators $X_1,\ldots,X_n\in\mathcal{F}$ as in Example \ref{ex:holonomy-biss}. Fix $(u,y=(y_1,\ldots,y_n))\in \mathcal{W}\subset M\times\mathbb{R}^n$, set $Y:=\sum_{i=1}^dy_iX_i$. {By assumption, $[Y, X]\in \mathcal{F}$. This implies that $d\varphi_1^Y(X)=(\varphi^Y_1)_*(X)\in X+\mathcal{F}$. Indeed, for $t$ such that  the flow $\varphi_t^Y$  of $Y$ is defined, one has  \begin{align*}
     d\varphi_1^Y(X)&=d\varphi_0^Y(X)+\int_0^1\frac{d}{dt}(d\varphi_t^Y)dt\\&= X+\int_0^1\underbrace{d\varphi_t^Y([Y,X])}_{\in \mathcal{F}}dt,\qquad\text{since $d\varphi_t^Y(\mathcal{F})=\mathcal{F}$}.
\end{align*}
Let $ Z_y=\int_0^1 d\varphi_t^Y([Y,X])dt$. 
When $\mathcal F $ is closed for Fréchet topology, it is clear that $ Z_y$ belongs to $\mathcal F $. We claim that it is in fact always true: Upon restricting to an open subset of $M$ if necessary, Item $2$ in Remark \ref{rmk:flot} implies that one can find local generators $ X_1, \dots, X_r$ of $ \mathcal F$, such that
 $$  d\varphi_t^Y([Y,X]) = \sum_{i=1}^r F_t^i X_i  $$ for some smooth functions $ F_t^i$ depending smoothly on the variable $t$. By integration, $Z_y = \sum_{i=1}^r \int_0^1 F_t^i dt \, X_i $ belongs to $ \mathcal F $.} In conclusion,  there exists $Z_y\in\mathcal{F}$ depending smoothly on $y$ such that $dt_0(X,0)=X+Z_y$. Take $\widetilde{Z}_y\in t_0^{-1}(\mathcal{F})$ such that $dt_0(\widetilde{Z}_y)=Z_y$. One has, 
\begin{equation*}
    dt_0\left((X,0)-\widetilde{Z}_y\right)=X.
\end{equation*}
Also, we can write $\widetilde{Z}_y=\widetilde{Z}_y^1+\widetilde{Z}_y^2$, with $\widetilde{Z}_y^1\in\Gamma(\ker  ds_0)$, $\widetilde{Z}_y^2\in\Gamma(\ker dt_0)$, since $\widetilde{Z}_y\in t_0^{-1}(\mathcal{F})$. By construction, the vector field $\widetilde{X}:=(X,-\widetilde{Z}_y^1)$ is a lift of $X$ to the bi-submersion $(\mathcal{W},s_0,t_0)$. This proves item ${1}$.\\

If $X_B\in\mathfrak{X}(B)$ and  $X_{B'}\in\mathfrak{X}(B')$ are two lifts of the symmetry $X$ on the path holonomy bi-submersions  $(B, s,t )$ and $(B',s',t')$ respectively, then $(X_B,X_{B'})$ is a lift of $X$ on the composition bi-submersion $B\circ B'$. This proves item ${2}$, since the Androulidakis-Skandalis' path holonomy atlas is made of fibered products and inverse of holonomy path holonomy bi-submersions \cite{AndroulidakisIakovos}.

Item $2$ in Proposition 2.10 of \cite{AndroulidakisIakovos} states that if the identity of $M$ is carried by $(B,s,t)$ at some point $v\in B$, then there exists an open neighborhood $V\subset B$ of $v$ that satisfies $s_{|_V}=s_0\circ g$ and $t_{|_V}=t_0\circ g$, for some submersion $g\colon V\longrightarrow \mathcal{W}$, for $\mathcal{W}$ of the form as in item 1. Thus, for all $X\in\mathfrak{s}(\mathcal{F})$ there exists a vector field $\widetilde{X}\in\mathfrak{X}(V)$ fulfilling $ ds_{|_V}(\widetilde{X})=dt_{|_V}(\widetilde{X})=X$. This proves item $3$.
\end{proof}

\begin{definition}
A \emph{symmetry of the tower of bi-submersion} $\mathcal{T}_B=(B_{i+1},s_i,t_i,\mathcal F_i)_{i \geq 0} $ over a singular foliation $\mathcal{F}_0=\mathcal{F}$ is a family $(X^i)_{i \geq 0} $ of vector fields with  the \emph{$i$-th component} $X^i $ in $\mathfrak{s}(\mathcal{F}_i)$  such that $ ds_{i-1}(X^i)= dt_{i-1}(X^i)=X^{i-1}$ for all $i\geq 1$. We denote by $\mathfrak{s}(\mathcal{T}_B)$ the Lie algebra of symmetries of $\mathcal{T}_B$.
\end{definition}

The next theorem gives a class of bi-submersion tower to which any symmetry of the base singular foliation $\mathcal F $ lifts.

\begin{theorem}\label{thm:symm-tower-path} Let $\mathcal F $  be a  foliation. Let $\mathcal{T}_B$ be an exact
path holonomy bi-submersion tower (or an exact path holonomy atlas bi-submersion tower). 
A vector field $X\in\mathfrak{X}(M)$ is a symmetry of $\mathcal{F}$, i.e. $[X,\mathcal{F}]\subset\mathcal{F}$, if and only if it is the component on $M$ of a symmetry of $\mathcal{T}_B$.
\end{theorem}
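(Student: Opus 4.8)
The plan is to prove the two implications separately, the forward one by a level-by-level recursion up the tower, and to isolate the single place where exactness of $\mathcal{T}_B$ is genuinely used.

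The implication from ``component of a symmetry of $\mathcal{T}_B$'' to ``symmetry of $\mathcal{F}$'' is immediate. If $(X^i)_{i\geq 0}$ is a symmetry of $\mathcal{T}_B$, then by the very definition of such a symmetry its $0$-th component $X^0=X$ lies in $\mathfrak{s}(\mathcal{F}_0)=\mathfrak{s}(\mathcal{F})$, so that $[X,\mathcal{F}]\subset\mathcal{F}$. (Equivalently, since $X^1$ is a lift of $X$ to the bi-submersion $(B_1,s_0,t_0)$, Proposition \ref{lift-symmetry-bi-susbmersion} gives the same conclusion.)

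For the converse, I would assume $X\in\mathfrak{s}(\mathcal{F})$ and build the family $(X^i)_{i\geq 0}$ recursively. First set $X^0:=X\in\mathfrak{s}(\mathcal{F}_0)$. Assuming $X^i\in\mathfrak{s}(\mathcal{F}_i)$ has been produced, I use that $(B_{i+1},s_i,t_i)$ is a path holonomy bi-submersion (resp. a path holonomy atlas) over $\mathcal{F}_i$, which is exactly the hypothesis on $\mathcal{T}_B$; then item $1$ (resp. item $2$) of Proposition \ref{prop:lift} furnishes a vector field $X^{i+1}\in\mathfrak{X}(B_{i+1})$ that is simultaneously $s_i$-projectable and $t_i$-projectable onto $X^i$, i.e. $ds_i(X^{i+1})=dt_i(X^{i+1})=X^i$. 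This gives both the required projection conditions for the next level.

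The one step requiring care, and the main (small) obstacle, is checking that $X^{i+1}$ is in fact a symmetry of $\mathcal{F}_{i+1}$, not merely a lift of $X^i$. Being at once $s_i$- and $t_i$-projectable, $X^{i+1}$ preserves $\Gamma(\ker ds_i)\cap\Gamma(\ker dt_i)$ by Remark \ref{rmk:uniqueness}(2). Here exactness of the tower is indispensable: it says precisely that $\mathcal{F}_{i+1}=\Gamma(\ker ds_i)\cap\Gamma(\ker dt_i)$, whence $[X^{i+1},\mathcal{F}_{i+1}]\subset\mathcal{F}_{i+1}$, i.e. $X^{i+1}\in\mathfrak{s}(\mathcal{F}_{i+1})$. (For a non-exact tower $\mathcal{F}_{i+1}$ could be a proper submodule of this intersection, and a lift would not be guaranteed to preserve it.) This closes the recursion; since each lifting step depends only on the previous level, the argument is legitimate even for an infinite tower. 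The resulting family $(X^i)_{i\geq 0}$ is by construction a symmetry of $\mathcal{T}_B$ whose component on $M=B_0$ is $X$, which completes the proof.
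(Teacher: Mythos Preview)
Your proof is correct and follows essentially the same route as the paper: the backward direction via the definition (or Proposition \ref{lift-symmetry-bi-susbmersion}), and the forward direction by recursively lifting with Proposition \ref{prop:lift} and then invoking Remark \ref{rmk:uniqueness} together with exactness to get $X^{i+1}\in\mathfrak{s}(\mathcal{F}_{i+1})$. A minor note: the paper cites Remark \ref{rmk:uniqueness}(3) at the key step, whereas your citation of item (2) is actually the more directly relevant one; you also make the role of exactness more explicit than the paper does.
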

\begin{proof}
For any symmetry $(X^i)_{i \geq 0} $ of the bi-submersion tower $\mathcal{T}_B$ the first component $X^0\in \mathfrak X(M)$ is a symmetry of $\mathcal F$,  by Proposition \ref{lift-symmetry-bi-susbmersion}. The other implication is a consequence of item 1. resp. item 2. in Proposition \ref{prop:lift} and Remark \ref{rmk:uniqueness}. It is due to the fact that the tower $\mathcal{T}_B$ is generated by path  holonomy bi-submersions, and then we can lift  symmetries at every stage of the tower $\mathcal{T}_B$. Indeed, assume for instance that $\mathcal{T}_B$ is an exact
path holonomy bi-submersion tower. By Proposition \ref{prop:lift},  any symmetry $X\in \mathfrak{s}(\mathcal{F})$ admits a lift $X^1\in \mathfrak{X}{(B_1)}$. Moreover, $X^1$ is a symmetry of the singular foliation $\mathcal{F}_1=\Gamma(\ker (d s_0))\cap\Gamma(\ker (d t_0))$, by Remark \ref{rmk:uniqueness}(3). We continue by recursion to construct $X^i\in \mathfrak{s}(\mathcal{F}_i)$ for $i\geq 2$.
\end{proof}

\begin{remark}\label{rmk:sym-chain-map}
Let $(X^i)_{i\geq 0}$ be a lift of $X^0:=X\in\mathfrak{s}(\mathcal{F})$. For $i\geq 1$, $\nabla^i_X:=\mathrm{ad}_{X^i}$ preserves $\Gamma(\ker  ds_{i-1})$, since $X^i$ is $s_{i-1}$-projectable. Altogether, they define a chain map $(\nabla_X^i)_{i\geq 0}$ at the section level of the complex \eqref{eq:exa-bisub}, on projectable vector fields in  \eqref{complex-projectable}, since for every $i\geq 0$ and any $t_{i}$-projectable vector field $\xi\in \ker  ds_{i}$, \begin{align*}
     dt_{i}([X^{i+1},\xi])&=[ dt_{i}(X^{i+1}),  dt_{i}(\xi)]\\&=[X^{i},  dt_{i}(\xi)], 
\end{align*}that is $ dt_{i}\circ \nabla^{i+1}_X=\nabla^i_X\circ  dt_{i}$. 
\end{remark}

\begin{remark}
In \cite{GarmendiaAlfonso2}, under some assumptions, it is shown that if a Lie group  $G$ acts on a foliated manifold $(M,\mathcal{F})$, then it acts on its holonomy groupoid. It is likely that this result follows from Theorem \ref{thm:symm-tower-path}, this will be addressed in another study.
\end{remark}
\subsection{Lifts of actions of a Lie algebra on a bi-submersion tower}
We end the section with the following constructions and some natural questions.\\

Let $\mathcal{T}_B=(B_{i+1},s_i,t_i,\mathcal F_i)_{i \geq 0} $ be an exact path holonomy  bi-submersion tower over a singular foliation $(M,\mathcal F)$ of length $n+1$. 

By Theorem \ref{thm:symm-tower-path}, any vector field $X\in \mathfrak{s}(\mathcal{F})$ lifts to a symmetry $(X^i)_{i\geq 0}$ of $\mathcal{T}_B$. Once a lift is chosen, we can define a linear map, 
$$X\in \mathfrak{s}(\mathcal{F})\mapsto (X^i)_{i\geq 1}\in \mathfrak{s}(\mathcal{T}_B).$$
Let $\varrho\colon\mathfrak g\rightarrow \mathfrak{X}(M)$ be a strict symmetry action of a Lie algebra $\mathfrak{g}$ on $(M,\mathcal F)$.  For $x\in\mathfrak g$, there exists   $(\varrho(x)^i)_{i \geq 0}$, with  $\varrho(x)^i\in\mathfrak{s}(\mathcal{F}_i)\subset \mathfrak{X}(B_i)$ a symmetry of $\mathcal{T}_B$ such that $X_0=\varrho(x)\in \mathfrak s(\mathcal F)$, by Theorem \ref{thm:symm-tower-path}. Consider the composition, \begin{equation}
x\in\mathfrak g\longmapsto \varrho(x)\in \mathfrak{s}(\mathcal F)\longmapsto (\varrho(x)^i)_{i \geq 0}\in \mathfrak{s}(\mathcal{T}_B)\mapsto \varrho(x)^1\in \mathfrak{X}(B_1).
\end{equation}

\begin{lemma}\label{lemma:sym-action-tower}
For all $x,y\in \mathfrak{g}$, $$[\varrho(x),\varrho(y)]^1-[\varrho(x)^1, \varrho(y)^1]= dt_1(C_1(x,y))$$
with $C_1(x,y)\in \Gamma(\ker  ds_1\rightarrow B_2)$ a $t_1$-projectable vector field, for some bilinear map  $$C_1\colon \wedge^2\mathfrak g \longrightarrow \Gamma(\ker  ds_1\rightarrow B_2).$$
\end{lemma}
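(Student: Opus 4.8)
The plan is to combine the strictness of $\varrho$ with the two elementary facts about lifts to a bi-submersion recorded in Remark \ref{rmk:uniqueness}, and then to feed the resulting vector field into the exactness of the complex \eqref{eq:exa-bisub} established in Lemma \ref{prop:sequence}, which is available here because $\mathcal{T}_B$ is an \emph{exact} tower, so that $\mathcal F_1=\Gamma(\ker ds_0)\cap\Gamma(\ker dt_0)$.

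First I would observe that, since $\varrho$ is a strict symmetry action, $[\varrho(x),\varrho(y)]=\varrho([x,y]_\mathfrak g)$ as elements of $\mathfrak s(\mathcal F)$; consequently its chosen lift $[\varrho(x),\varrho(y)]^1=\varrho([x,y]_\mathfrak g)^1$ to $B_1$ is well defined. On the other hand, because $\varrho(x)^1$ and $\varrho(y)^1$ are by construction both $s_0$- and $t_0$-projectable onto $\varrho(x)$ and $\varrho(y)$ respectively, their commutator $[\varrho(x)^1,\varrho(y)^1]$ is $s_0$- and $t_0$-projectable onto $[\varrho(x),\varrho(y)]$; in other words it is itself a lift of $[\varrho(x),\varrho(y)]$ to $(B_1,s_0,t_0)$. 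Two lifts of the same symmetry of $\mathcal F$ to $B_1$ differ by an element of $\Gamma(\ker ds_0)\cap\Gamma(\ker dt_0)$ by Remark \ref{rmk:uniqueness}(1) (equivalently one may quote Remark \ref{rmk:uniqueness}(3) directly). Applying this to the two lifts just produced yields
\[
[\varrho(x),\varrho(y)]^1-[\varrho(x)^1,\varrho(y)^1]\in \Gamma(\ker ds_0)\cap\Gamma(\ker dt_0)=\mathcal F_1,
\]
so this vector field lies in $\Gamma(\ker ds_0)$ and is annihilated by $dt_0$.

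Finally I would invoke the exactness of \eqref{eq:exa-bisub} at the $\Gamma(\ker ds_0)$ spot, which asserts precisely that every element of $\Gamma(\ker ds_0)\cap\Gamma(\ker dt_0)$ is the image under $dt_1$ of some $t_1$-projectable vector field of $\Gamma(\ker ds_1\to B_2)$. This furnishes $C_1(x,y)\in\Gamma(\ker ds_1\to B_2)$ with
\[
dt_1\bigl(C_1(x,y)\bigr)=[\varrho(x),\varrho(y)]^1-[\varrho(x)^1,\varrho(y)^1].
\]
To arrange bilinearity and antisymmetry of $C_1$, I would note that the right-hand side depends bilinearly and antisymmetrically on $(x,y)$: the chosen lift $x\mapsto\varrho(x)^1$ is linear, and both $[\varrho(x)^1,\varrho(y)^1]$ and $\varrho([x,y]_\mathfrak g)^1$ are bilinear and antisymmetric. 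Hence the expression factors through $\wedge^2\mathfrak g$, and fixing once and for all a linear set-theoretic section of the surjection $dt_1$ from the $t_1$-projectable sections of $\ker ds_1$ onto $\mathcal F_1$ produces the desired bilinear map $C_1\colon\wedge^2\mathfrak g\longrightarrow\Gamma(\ker ds_1\to B_2)$.

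The only delicate point, which is purely a matter of bookkeeping, is ensuring that the preimage $C_1(x,y)$ can be taken to depend bilinearly on $(x,y)$ and to remain among the $t_1$-projectable sections of $\ker ds_1$; both are settled by choosing a single linear section of the relevant surjection before letting $x,y$ vary, so no genuine analytic obstacle appears.
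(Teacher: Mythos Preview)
Your proof is correct and follows essentially the same route as the paper's: the paper's one-line argument simply records that $[\varrho(x),\varrho(y)]^1-[\varrho(x)^1,\varrho(y)^1]\in\Gamma(\ker ds_0)\cap\Gamma(\ker dt_0)$ and then invokes Lemma~\ref{prop:sequence}. You have merely spelled out why that inclusion holds (via Remark~\ref{rmk:uniqueness}) and added the bookkeeping for bilinearity, neither of which the paper makes explicit.
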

\begin{proof}
This follows from Lemma \ref{prop:sequence}, because $[\varrho(x),\varrho(y)]^1-[\varrho(x)^1, \varrho(y)^1]\in \Gamma(\ker ds_{0})\cap\Gamma(\ker dt_{0})$. 
\end{proof}

\begin{theorem}\label{thm:final-sym}
The map $C_1\colon \wedge^2\mathfrak g \longrightarrow \Gamma(\ker  ds_1\rightarrow B_2)$ of Lemma \ref{lemma:sym-action-tower} satisfies for all $x,y, z\in \mathfrak{g}$,
\begin{equation}
    C_1([x,y]_\mathfrak{g},z) + \nabla_{\varrho(x)}^2(C_1(y,z)) + \circlearrowleft(x,y,z)= dt_2(C_2(x,y,z)) 
\end{equation}
for some tri-linear map $C_2\colon\wedge^3\mathfrak g \longrightarrow \Gamma(\ker  ds_2\rightarrow B_3)$. Here, $\nabla^2$ is, as in Remark \ref{rmk:sym-chain-map}.
\end{theorem}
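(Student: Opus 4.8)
The plan is to recognise the asserted identity as the degree-two coherence relation of the lift $(\varrho(\cdot)^i)_i$, in perfect analogy with the relation satisfied by $\Phi_1$ in the second part of Theorem \ref{main}: the role played there by the acyclicity of $\mathrm{ad}_Q$ is now played by the exactness of the bi-submersion complex \eqref{eq:exa-bisub} proved in Lemma \ref{prop:sequence}. Throughout, write
$$\mathrm{LHS}(x,y,z):=C_1([x,y]_\mathfrak{g},z)+\nabla^2_{\varrho(x)}(C_1(y,z))+\circlearrowleft(x,y,z).$$

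First I would verify that $\mathrm{LHS}(x,y,z)$ is a $t_1$-projectable section of $\ker ds_1\to B_2$, so that applying $dt_1$ to it is meaningful. This holds because $C_1$ is valued in such sections by Lemma \ref{lemma:sym-action-tower}, and because $\varrho(x)^2$, being the degree-two component of a symmetry of the tower, is $s_1$- and $t_1$-projectable and a symmetry of $\mathcal F_2$; hence $\nabla^2_{\varrho(x)}=\mathrm{ad}_{\varrho(x)^2}$ preserves both $\Gamma(\ker ds_1)$ and $t_1$-projectability (Remarks \ref{rmk:uniqueness} and \ref{rmk:sym-chain-map}). Moreover $\mathrm{LHS}$ is totally alternating in $(x,y,z)$, as a direct check using the antisymmetry of $C_1$ shows, so a tri-linear alternating $C_2\colon\wedge^3\mathfrak g\to\Gamma(\ker ds_2\to B_3)$ is exactly what we may hope to produce.

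The key reduction is that, by exactness of \eqref{eq:exa-bisub} at the $\ker ds_1$ node (Lemma \ref{prop:sequence}), it suffices to prove $dt_1(\mathrm{LHS}(x,y,z))=0$; any $t_2$-projectable preimage under $dt_2$ then serves as $C_2$, and it can be chosen linear and alternating in $(x,y,z)$. To compute $dt_1(\mathrm{LHS})$ I would push $dt_1$ through each summand using the chain-map identity $dt_1\circ\nabla^2_{\varrho(x)}=\mathrm{ad}_{\varrho(x)^1}\circ dt_1$ of Remark \ref{rmk:sym-chain-map} together with the defining relation $dt_1(C_1(u,v))=\varrho([u,v]_\mathfrak{g})^1-[\varrho(u)^1,\varrho(v)^1]$ of Lemma \ref{lemma:sym-action-tower} (the lift of $[\varrho(u),\varrho(v)]$ equals $\varrho([u,v]_\mathfrak{g})^1$ since $\varrho$ is strict and the lifting map $\mathfrak s(\mathcal F)\to\mathfrak s(\mathcal T_B)$ is linear). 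The cyclic sum then separates into three blocks: the terms $\varrho([[x,y]_\mathfrak{g},z]_\mathfrak{g})^1$ sum to $\varrho(0)^1=0$ by the Jacobi identity of $\mathfrak g$; the iterated commutators $[\varrho(x)^1,[\varrho(y)^1,\varrho(z)^1]]$ sum to $0$ by the Jacobi identity in $\mathfrak X(B_1)$; and the mixed terms $[\varrho([x,y]_\mathfrak{g})^1,\varrho(z)^1]$ are to be absorbed by the $\mathrm{ad}_{\varrho(x)^1}$-contributions.

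The delicate part is precisely this last step: cancelling the mixed commutators against the $\nabla^1$-contributions. It is here that the relative sign between the two families of summands, the strictness of $\varrho$, and the antisymmetrisation in $(x,y,z)$ must be combined exactly as in the identity $\Phi_1([x,y]_\mathfrak g,z)-[\Phi_0(x),\Phi_1(y,z)]+\circlearrowleft(x,y,z)=[Q,\Phi_2(x,y,z)]$ of Theorem \ref{main}, and as in the Jacobi computation carried out inside the null derivation in the proof of Proposition \ref{alt:thm-res}. Once $dt_1(\mathrm{LHS})=0$ is established, exactness of \eqref{eq:exa-bisub} furnishes the required $t_2$-projectable $C_2(x,y,z)\in\Gamma(\ker ds_2\to B_3)$ with $dt_2(C_2(x,y,z))=\mathrm{LHS}(x,y,z)$, which is the assertion.
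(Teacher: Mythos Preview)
Your proposal is correct and follows essentially the same route as the paper: both arguments amount to showing that $dt_1$ applied to the left-hand side vanishes, by expanding via the defining relation $dt_1(C_1(u,v))=[\varrho(u),\varrho(v)]^1-[\varrho(u)^1,\varrho(v)^1]$ together with the chain-map identity $dt_1\circ\nabla^2_{\varrho(x)}=\mathrm{ad}_{\varrho(x)^1}\circ dt_1$, and then invoking the Jacobi identity on $M$ (after applying strictness of $\varrho$) and on $B_1$; exactness of the complex \eqref{eq:exa-bisub} from Lemma~\ref{prop:sequence} then furnishes $C_2$. The only difference is organisational: the paper rewrites $\sum_{\mathrm{cyc}}dt_1\bigl(C_1([x,y]_{\mathfrak g},z)\bigr)$ directly as $\sum_{\mathrm{cyc}}dt_1\bigl([C_1(x,y),\varrho(z)^2]\bigr)$ and moves the latter across, whereas you expand everything and sort the terms into the three blocks you describe; the ``delicate'' cancellation you flag is exactly the passage the paper performs in one line.
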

\begin{proof}
For $x,y, z\in \mathfrak{g}$, 
\begin{align*}
    dt_1\left(C_1([x,y]_\mathfrak g,z)\right) +\circlearrowleft(x,y,z)&=[\varrho([x,y]_\mathfrak g),\varrho(z)]^1-[\varrho([x,y]_\mathfrak g)^1,\varrho(z)^1] +\circlearrowleft(x,y,z)\\&=\cancel{[[\varrho(x),\varrho(y)],\varrho(z)]^1}-[[\varrho(x),\varrho(y)]^1,\varrho(z)^1]+\circlearrowleft(x,y,z)\\&=-\cancel{[[\varrho(x)^1,\varrho(y)^1],\varrho(z)^1]}+[dt_1(C_1(x,y)),\varrho(z)^1]+\circlearrowleft(x,y,z)\\&=dt_1([C_1(x,y),\varrho(z)^2]) +\circlearrowleft(x,y,z).
\end{align*}
We have used Jacobi identity and $dt_1(\varrho(z)^2)=\varrho(z)^1$. This implies that 
\begin{equation}
    dt_1\left(C_1([x,y]_\mathfrak g,z)-[C_1(x,y),\varrho(z)^2] +\circlearrowleft(x,y,z)\right)=0.
\end{equation}Again Lemma \ref{prop:sequence} implies the result.\end{proof}

\hspace{1cm}

\noindent
Here is a natural  question:

\noindent
\textbf{Question}: {Can we construct a Lie $\infty$-algebra structure on $\displaystyle{\oplus_{i=0}^{+\infty}{}^{t_i}\Gamma(\ker ds_{i})}$ such that the construction in Theorem \ref{thm:final-sym} continues to a Lie $\infty$-morphism from $\mathfrak{g}[1]$ to $\displaystyle{\oplus_{i=0}^{+\infty}{}^{t_i}\Gamma(\ker ds_{i})}$?}  {where ${}^{t_i}\Gamma(\ker ds_{i})$ is defined as in Convention \ref{conven:proj}}. 


\appendix\label{app:A}
\section{Universal Lie $\infty$-algebroids}\label{sec:2}
Let us now recall the definition of Lie $\infty$-algebroids over a manifold and their morphisms and homotopies. Most definitions of this section can be found in \cite{Poncin,LLS,CLRL} and our conventions and notations are those of \cite{LLS,CLRL}.

{In the definition below, we restrict ourselves to the case of finite rank. Recall that finitely generated projective modules, by Serre-Swan theorem \cite{SwanRichardG}, are the module of sections of vector bundles. This is the setting in this article, except for Theorem \ref{alt-thm-res} where infinite rank Lie algebroid are allowed see e.g., \cite{CLRL}.}
\begin{definition}
A \emph{Lie $\infty$-algebroid over $M$} is the datum of a sequence $E = (E_{-i} ),\,1\leq i<\infty$ of vector bundles over $M$ together with a structure of Lie $\infty$-algebra $(\ell_k)_{k\geq 1}$ on the sheaf of sections of $E$ and a vector bundle morphism, $\rho\colon E_{-1}\rightarrow TM$, called \emph{anchor map} such that the $k$-ary brackets $\ell_k,\,k\neq 2$ are $\mathcal O$-multilinear and such that
\begin{equation}
  \ell_2(e_1, f e_2) =\rho(e_1)[f]e_2 + f\ell_2(e_1, e_2) 
\end{equation}
for all $e_1\in \Gamma(E_{-1} ), e_2\in\Gamma(E_\bullet)$ and $f\in\mathcal O$.\\

The sequence

 \begin{equation}
        \xymatrix{\cdots\ar[r]^{\ell_1}&E_{-2}\ar[r]^{\ell_1}&E_{-1}\ar[r]^{{\rho}}&{TM},}
    \end{equation}
    
    is a complex called the \emph{linear part} of the Lie $\infty$-algebroid.
\end{definition}

\begin{remark}
Any Lie $\infty$-algebroid on $M$ has an induced singular foliation on $M$ which is given by the image of the anchor map, that we call the \emph{basic singular foliation}.
\end{remark}

There is an alternative definition for Lie $\infty$-algebroids in terms of $Q$-manifolds with purely non-negative degrees. 
\begin{definition}\label{defv2}
A \emph{splitted $NQ$-manifold} is a pair $(E,Q)$ where $E\rightarrow M$ is a sequence of vector bundles over $M$ indexed by negative integers and where $Q$ is a homological vector field of degree $+1$, i.e., $Q\in\text{Der}_1\left( \Gamma(S^\bullet(E^*))\right)$ is such that $[Q,Q]=0$.\\

\noindent
We denote by $\E$ and call \emph{functions} on the splitted $NQ$-manifold $E\longrightarrow M$ the sheaf of graded commutative $\mathcal{O}$-algebras made of
sections of  $S^\bullet(E^*)$.
\end{definition}

There is a one-to-one correspondence between splitted $NQ$-manifolds and Lie $\infty$-algebroids \cite{Poncin, LeanMadeleineJotz2020Maru,Voronov2}. This formulation allows to write in a compact manner morphisms of Lie $\infty$-algebroids i.e., simply as chain maps. Homotopy equivalence can also be defined, see Section 3.4.2  in \cite{LLS} or  \cite{CLRL} for more details. From now on, we write $(E,Q)$ to denote a Lie $\infty$-algebroid over $M$.\\


Let us recall from \cite{LLS,CLRL} the following definition and theorem. 

\begin{definition}\label{def:geom-resol}
Let $\mathcal{F}\subset \mathfrak{X}(M)$ be a singular foliation on a manifold $M$. A \emph{geometric resolution} of the singular foliation $\mathcal{F}$ is a projective resolution $((P_{-i})_{i\geq 1},(\dd^{(i)})_{i\geq 2},\rho)$ of $\mathcal{F}$ as a $\mathcal{O}$-module that corresponds to a sequence of vector bundles $(E,\Bar{\dd},\Bar{\rho})$ over $M$ 
 \begin{equation}
        \xymatrix{\cdots\ar[r]^{\Bar{\dd}^{(3)}}&E_{-2}\ar[r]^{\Bar{\dd}^{(2)}}&E_{-1}\ar[r]^{\Bar{\rho}}&{TM},}
    \end{equation}
i.e.,
\begin{itemize}
    \item for $i\geq 1$ the $\mathcal{O}$-module of sections of $E_{-i}$ is $P_{-i}=\Gamma(E_{-i})$
    \item for $i\geq 2$, the induced maps at the sections level $$\Bar{\dd}^{(i)}\colon \Gamma(E_{-i})\longrightarrow \Gamma(E_{-i+1})\quad\text{or}\quad \Bar{\rho}\colon \Gamma(E_{-1})\longrightarrow \mathcal{F}$$ coincide with $\dd^{(i)}\colon P_{-i}\longrightarrow P_{-i+1}$ or with $\rho\colon P_{-1}\longrightarrow \mathcal{F}$ respectively.
\end{itemize}
For convenience, we denote by $\Bar{\dd}$ and $\Bar{\rho}$ the same as $\dd$ and $\rho$ respectively. Also, we call $\rho\colon E_{-1}\longrightarrow TM$ the \emph{geometric resolution anchor}. A geometric resolution is said to be \emph{minimal} at a point $m\in M$ if, for all $i\geq 2$, the linear maps $\dd^{(i)}_{|_m}\colon {E_{-i}}_{|_m}\longrightarrow  {E _{-i+1}}_{|_m}$ vanish. 
\end{definition}

\begin{theorem}\cite{LLS, CLRL,LavauSylvain}
Let $\mathcal{F}$ be a singular foliation on $M$.
Any geometric resolution  of ~$\mathcal F$ 
\begin{equation}
    \label{eq:resolutions}
\cdots \stackrel{\dd} \longrightarrow E_{-3} \stackrel{\dd}{\longrightarrow} E_{-2} \stackrel{\dd}{\longrightarrow} E_{-1} \stackrel{\rho}{\longrightarrow} TM \end{equation}
  comes equipped with a Lie $\infty $-algebroid structure whose unary bracket is $\dd $ and whose anchor map is $\rho$. Such a Lie $\infty $-algebroid structure is unique up to homotopy and is called a \emph{universal Lie $\infty $-algebroid of} $\mathcal F$.
  
\end{theorem}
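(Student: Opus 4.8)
This is a restatement of the universality theorem of \cite{LLS, CLRL, LavauSylvain}, so the plan is to recall the structure of the argument. Working in the dual $NQ$-manifold picture, I would construct the homological vector field $Q \in \mathrm{Der}_1(\Gamma(S^\bullet(E^*)))$ by induction on arity. Decompose $Q = \sum_{k \geq 0} Q^{(k)}$, where $Q^{(k)}$ raises the polynomial degree in $E^*$ by $k$, and let $Q^{(0)}$ be the degree $+1$ derivation dual to the given data $(\dd, \rho)$; this is well defined and satisfies $[Q^{(0)},Q^{(0)}]=0$ precisely because \eqref{eq:resolutions} is a complex (i.e.\ $\dd\circ\dd=0$ and $\rho\circ\dd=0$). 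The condition $[Q,Q]=0$ then decomposes by arity into the family of equations $\sum_{i+j=n}[Q^{(i)}, Q^{(j)}] = 0$, and the goal is to solve them one value of $n$ at a time.

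For the inductive step, suppose $Q^{(0)}, \dots, Q^{(n-1)}$ have been chosen so that $[Q,Q]^{(m)} = 0$ for all $m < n$. The arity-$n$ relation reads $2\,\mathrm{ad}_{Q^{(0)}}(Q^{(n)}) = R_n$, where $R_n := -\sum_{i+j=n,\; i,j \geq 1}[Q^{(i)}, Q^{(j)}]$ depends only on the already constructed terms. The base case $n=1$ is the construction of the binary bracket $\ell_2$: since $\mathcal F$ is closed under the Lie bracket and $\rho\colon \Gamma(E_{-1}) \to \mathcal F$ is onto, one may pick $\ell_2(e_1,e_2) \in \Gamma(E_{-1})$ with $\rho(\ell_2(e_1,e_2)) = [\rho(e_1), \rho(e_2)]$, imposing the Leibniz rule relative to $\rho$.

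The heart of the argument, and the step I expect to be the main obstacle, is to solve $\mathrm{ad}_{Q^{(0)}}(Q^{(n)}) = \tfrac12 R_n$ at each stage. This rests on two facts. First, $R_n$ is an $\mathrm{ad}_{Q^{(0)}}$-cocycle: this follows from the graded Jacobi identity applied to $[Q,[Q,Q]]=0$ together with the vanishing of the lower-arity relations, a bookkeeping computation of the kind used in Lemma \ref{lem:length}. Second, the complex $(\Gamma(S^\bullet(E^*)) \otimes E_\bullet, \mathrm{ad}_{Q^{(0)}})$ is acyclic in the relevant bidegree, which is exactly the translation at the level of sections of the exactness of the geometric resolution \eqref{eq:resolutions} (the same acyclicity of $\mathrm{ad}_{Q^{(0)}}$ invoked in Remark \ref{rmk:arity}). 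Combining the two, $R_n$ lies in the image of $\mathrm{ad}_{Q^{(0)}}$ and a preimage $Q^{(n)}$ exists; the only delicate point is the $\mathcal O$-(multi)linearity of the resulting brackets $\ell_k$ for $k \neq 2$, which is automatic once the correction is taken inside the $\mathcal O$-linear part of the complex.

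For uniqueness up to homotopy, let $(E,Q)$ and $(E',Q')$ be two such structures on (possibly different) resolutions of the same $\mathcal F$. Since both linear parts are projective resolutions of the $\mathcal O$-module $\mathcal F$, the identity of $\mathcal F$ lifts to a chain map $E \to E'$ over $\mathrm{id}_M$, unique up to chain homotopy. I would then promote this chain map to a Lie $\infty$-morphism $(E,Q) \rightsquigarrow (E',Q')$ by precisely the same arity-by-arity obstruction argument: the defect of being a morphism at each arity is an $\mathrm{ad}_{Q^{(0)}}$-cocycle, hence a coboundary by acyclicity, allowing one to correct the next Taylor coefficient. Because the underlying chain map is a quasi-isomorphism of resolutions, the resulting morphism is invertible up to homotopy, which yields the desired homotopy equivalence.
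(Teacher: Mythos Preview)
The paper does not supply its own proof of this theorem: it is quoted from \cite{LLS,CLRL,LavauSylvain} and stated without argument in the appendix. Your outline is a faithful sketch of the construction in those references---building $Q$ arity by arity, using that each obstruction $R_n$ is an $\mathrm{ad}_{Q^{(0)}}$-cocycle and that the exactness of the geometric resolution makes $\mathrm{ad}_{Q^{(0)}}$ acyclic in the relevant degrees, then repeating the obstruction argument to promote a chain quasi-isomorphism to a Lie $\infty$-equivalence. So there is nothing to compare against in the present paper, and your proposal is correct as a summary of the cited proofs.

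One small caveat: your base case $n=1$ is underspecified. Choosing $\ell_2$ only on $\Gamma(E_{-1})\times\Gamma(E_{-1})$ via a lift of the Lie bracket on $\mathcal F$ is not quite enough; one must also define $\ell_2$ on mixed degrees so that $\ell_1$ becomes a derivation of $\ell_2$ (the almost-Lie-algebroid step), and this again uses exactness of the resolution in each degree. This is handled explicitly in \cite{CLRL}, and the paper itself invokes exactly that construction in the proof of Theorem~\ref{alt-thm-res}, so it is worth flagging as a genuine step rather than folding it into ``the base case''.
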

\begin{remark}
For a given Lie $\infty$-algebroid $(E,Q)$, the triple $(\mathfrak{X}_\bullet(E), \lb, \text{ad}_Q)$ is a differential graded Lie algebra, where $\mathfrak{X}_\bullet(E)$ stands for the module of graded vector fields (=graded derivations of $\E$) on $E$, the bracket $\lb$ is the graded commutator of derivations and $\text{ad}_Q:=[Q,\cdot\,]$. 
\end{remark}
\section{Lie $\infty$-morphisms of DGLA and homotopies}\label{appendix-homotopies}
Let us recall the definitions of Lie $\infty$-morphisms and homotopies between differential graded Lie algebras in terms of coderivations. We restrict ourselves to a special case that we need for this paper.\\

\noindent
\textbf{Comorphisms and coderivations}. Let $\mathfrak{g}$ and $\mathfrak h$ be graded vector spaces over $\mathbb{K}$. 

\begin{definition}
A linear map $\Phi\colon S^\bullet_\mathbb{K}(\mathfrak{g}[1])\longrightarrow S^\bullet_\mathbb{K}(\mathfrak{h}[1])$ is said to be of \emph{arity $r\in \mathbb{N}_0$}, if it sends  $S^{k}_\mathbb{K}(\mathfrak{g}[1])$ to  $S^{k-r}_\mathbb{K}(\mathfrak{h}[1])$, for all $k\geq r$. Every linear map $\Phi$ can be decomposed as formal sum:\begin{equation}
    \Phi=\sum_{k\in \mathbb{Z}}\Phi^{(k)}
\end{equation}
where, for all $k\in \mathbb{N}_0$, \,$\Phi^{(k)}\colon S^\bullet_\mathbb{K} (\mathfrak{g}[1])\longrightarrow S^{\bullet-k}_\mathbb{K}(\mathfrak{h}[1])$ is a linear map of arity $k$. Therefore, a linear map $\Phi\colon S^\bullet_\mathbb{K}(\mathfrak{g}[1])\longrightarrow S^\bullet_\mathbb{K}(\mathfrak{h}[1])$ is of arity $r\in \mathbb{N}_0$ if and only if $\Phi^{(r)}$ is the unique non-zero term, namely $\Phi^{(k)}=0$, for $k\neq r$.
\end{definition}

Let us denote by $\Delta$ the natural coalgebra structure of $S_\mathbb{K}^\bullet(\mathfrak{g}[1])$ and by $\Delta'$ the one on $S_\mathbb{K}^\bullet(\mathfrak{h}[1])$. Given any linear map $\Phi\colon S_\mathbb{K}^\bullet(\mathfrak{g}[1])\longrightarrow \mathfrak{h}[1]$, we  denote by $\Phi_k\colon S_\mathbb{K}^{k+1}(\mathfrak{g}[1])\longrightarrow \mathfrak{h}[1]$ for $k\in \mathbb{N}_0$ the restriction of $\Phi$ to $S_\mathbb{K}^{k+1}(\mathfrak{g}[1])$. The linear map $\Phi$ can be extended to a unique comorphism $\Bar\Phi\colon S_\mathbb{K}^\bullet (\mathfrak{g}[1])\longrightarrow S_\mathbb{K}^\bullet (\mathfrak{h}[1])$ by taking for $r\in \mathbb{N}$ the component on $S_\mathbb{K}^r \mathfrak{h}[1]$ to be for $x_1,\ldots,x_k\in \mathfrak{g}[1]$
\begin{equation}
    \sum_{i_1 +\cdot+i_r =k}\sum_{\sigma\in \mathfrak{S}(i_1,\ldots,i_r )}\epsilon(\sigma)\frac{1}{r!}\prod_{j=1}^r\Phi_{i_j-1}(x_{\sigma(i_{1}+\cdots+i_{j-1}+1)},\ldots, x_{\sigma(i_{1}+\cdots+i_j)}).
\end{equation}
where $\mathfrak{S}(i_1,\ldots,i_r )$ is the set of $(i_1,\ldots,i_r)$-shuffles, with $i_1,\ldots,i_r\in \mathbb{N}$. Also, $\prod$ stands for the product of $S^\bullet_\mathbb{K}(\mathfrak{h}[1])$.\\

Every comorphism from $S_\mathbb{K}^\bullet(\mathfrak{g}[1])$ to $S_\mathbb{K}^\bullet (\mathfrak{h}[1])$ is of this form \cite{Kassel}. That is, a comorphism $\Phi\colon S_\mathbb{K}^\bullet (\mathfrak{g}[1])\rightarrow S_\mathbb{K}^\bullet (\mathfrak{h}[1])$ is entirely determined by the collection indexed by $k \in \mathbb N $ of maps called  \emph{$k$-th Taylor coefficient}:
\begin{equation}
\label{eq:Taylor}
    \Phi_k \colon S^{k+1}_\mathbb{K}(\mathfrak{g}[1])\stackrel{\Phi}{\longrightarrow} S^\bullet_\mathbb{K}(\mathfrak{h}[1])\stackrel{\text{pr}}{\longrightarrow} \mathfrak{h}[1],
\end{equation}
with $\text{pr}$ being the projection onto the term of arity $1$, i.e.
$\text{pr}\colon S^\bullet_\mathbb{K}(\mathfrak{h}[1])\rightarrow S^{1}_\mathbb K (\mathfrak{h}[1])\simeq \mathfrak{h}[1]$. Notice that the component $\Phi^{(k)}$ of arity $k$ of $\Phi$ coincides with $k$-th Taylor coefficient $\Phi_k$ on $S^{k+1}_\mathbb{K}(\mathfrak{g}[1])$. Hence, a comorphism $\Phi\colon S_\mathbb{K}^\bullet (\mathfrak{g}[1])\rightarrow S_\mathbb{K}^\bullet(\mathfrak{h}[1])$ admits a  decomposition of the form:\begin{equation}
    \Phi=\sum_{k \geq 0}\Phi^{(k)}
\end{equation}
with a sum that runs on $k\geq 0$ and not on $k\in \mathbb{Z}$. 

\begin{definition}\label{coder}Let $\Phi\colon S^\bullet_\mathbb{K}(\mathfrak{g}[1])\mapsto S^\bullet_\mathbb{K} (\mathfrak{h}[1])$ be a graded comorphism. A \emph{$\Phi$-coderivation} of degree $k$ on $S_\mathbb{K}^\bullet(\mathfrak{g}[1])$ is a degree $k\in\mathbb{N}_0$ linear map $\mathcal H : S^\bullet_\mathbb{K} (\mathfrak{g}[1])\mapsto S^\bullet_\mathbb{K}(\mathfrak{h}[1])$ which satisfies the so-called (co)Leibniz identity:
\begin{equation} \label{eq:Phicoder} 
	\Delta'\circ \mathcal H = (\mathcal H\otimes \Phi)\circ \Delta + (\Phi \otimes\mathcal H) \circ \Delta.
\end{equation}When $\mathfrak{g}=\mathfrak{h}$ and $\Phi=\mathrm{id}$, we say that $\mathcal{H}$ is a \emph{coderivation}.
\end{definition}The same results on comorphisms hold for coderivations \cite{Kassel}.\\

\noindent
\textbf{Lie $\infty$-morphisms of differential graded Lie algebras}. Let $(\mathfrak{g},\lb_\mathfrak{g})$ be a Lie algebra and $(E, Q)$ a Lie $\infty$-algebroid over $M$. The graded symmetric Lie bracket on $\mathfrak{X}(E)[1]$ is of degree $+1$ and given on homogeneous elements $u,v\in\mathfrak{X}(E)[1]$ as  $$\{u,v\}:=(-1)^{\lvert v\rvert}[u,v].$$ 
In the sequel, we write $(\mathfrak{X}(E)[1],\lb,\mathrm{ad}_Q)$ instead of $(\mathfrak{X}(E)[1],\{\cdot\,,\cdot\},\mathrm{ad}_Q)$.\\

Let $\left(S_\mathbb{K}^\bullet(\mathfrak{g}[1]), Q_\mathfrak{g}\right)$ respectively $(S_\mathbb{K}^\bullet(\mathfrak{X}(E)[1]),\Bar{Q})$ be the corresponding formulations in terms of coderivations of the differential graded Lie algebras $(\mathfrak{g}[1],\lb_\mathfrak{g})$ and $(\mathfrak{X}(E)[1],\lb,\text{ad}_Q)$. Precisely, $Q_\mathfrak{g}$ is the coderivation defined  for every homogeneous monomial $x_1\wedge\cdots\wedge x_k \in S^k_\mathbb{K}(\mathfrak g[1])$ by
\begin{equation}
   Q_\mathfrak{g}(x_1\wedge\cdots\wedge x_k) :=\sum_{1\leq i<j\leq k}(-1)^{i+j-1}[x_i,x_j]_\mathfrak{g}\wedge x_1\wedge\cdots \widehat{x}_i\cdots \widehat{x}_j\cdots\wedge x_k,
\end{equation}
and  $\Bar{Q}=\Bar{Q}^{(0)}+\Bar{Q}^{(1)}$ is the coderivation of degree $+1$ where the only non-zero Taylor's coefficients are, $\Bar{Q}^{(0)}\colon S_\mathbb{K}^1(\mathfrak{X}(E)[1])\stackrel{\text{ad}_Q}{\longrightarrow}\mathfrak{X}(E)[1]$ and $\Bar{Q}^{(1)}\colon S_\mathbb{K}^2(\mathfrak{X}(E)[1])\stackrel{\{ \cdot\,,\cdot\}}{\longrightarrow}\mathfrak{X}(E)[1]$.
\begin{definition}\label{def:morph}\cite{LadaTom1994ShLa}
A \emph{Lie $\infty$-morphism} $\Phi\colon (\mathfrak{g}[1],\lb_\mathfrak{g})\rightsquigarrow (\mathfrak X_\bullet(E)[1],\lb,\text{ad}_{Q})$ is a graded coalgebra morphism $\Bar{\Phi}\colon (S_\mathbb{K}^\bullet(\mathfrak{g}[1]),Q_\mathfrak{g})\longrightarrow (S_\mathbb{K}^\bullet\left(\mathfrak X(E)[1]\right),\Bar{Q})$ of degree zero which satisfies,
\begin{equation}
    \Bar\Phi\circ Q_\mathfrak{g} =\Bar{Q}\circ\Bar\Phi.
\end{equation}In order words, it is the datum of degree zero linear maps $\left(\Bar{\Phi}_k\colon S_\mathbb{K}^{k+1}\mathfrak{g}[1]\longrightarrow \mathfrak X_{-k}(E)[1]\right)_{k\geq 0}$ that satisfies \begin{align}\label{infty-morph-axiom}
  \nonumber\sum_{1\leq i<j\leq n+2}(-1)^{i+j-1}\Bar{\Phi}_{n}([x_i,x_j]_\mathfrak{g},x_1,\ldots,\widehat{x}_{ij},\ldots,x_{n+2})&=[Q,\Bar{\Phi}_{n+1}(x_{1},\ldots,x_{n+2})] \,+&\\\sum_{\tiny{\begin{array}{c}
        i+j= n \\i\leq j\\\sigma\in\mathfrak{S}_{i+1,j+1}\end{array}}}\epsilon(\sigma)[\Bar{\Phi}_i(x_{\sigma(1)}&,\ldots,x_{\sigma(i+1)}),\Bar{\Phi}_j(x_{\sigma(i+2)},\ldots,x_{\sigma(n+2)})]
\end{align}where $\widehat{x}_{ij}$ means that we take $x_i,x_j$ out of the list. When there is no risk of confusion, we write $\Phi$ for $\Bar{\Phi}$.
\end{definition}
\begin{convention}
In the sequel, $Q_\mathfrak{g}$ and $\Bar Q$ will be in implicit.
\end{convention}
\begin{remark}It is important to notice that:
\begin{enumerate}
    \item Definition \ref{def:morph} and Definition 3.45 in \cite{LLS} are compatible when $M=\{\mathrm{pt}\}$. Therefore,  morphisms in both sense match.

\item In \cite{MEHTA2012576}, Definition \ref{def:morph} corresponds to the definition of actions of a Lie $\infty$-algebras of finite dimension on Lie $\infty$-algebroids of finite rank. Here, we only have a Lie algebra. In contrast to theirs, we do not assume that $\mathfrak{g}$ is finite dimensional.
\end{enumerate}
\end{remark}
\begin{remark}\label{Rmk:CE1}
It follows from the axioms \eqref{infty-morph-axiom} that for all $x,y\in \mathfrak{g}[1]$, $[Q, \Phi_0(x)]=0$ and \begin{equation}\label{eq:second-condition}
    \Phi_0([x,y]_{\mathfrak{g}})-[\Phi_0(x),\Phi_0(y)]=[Q,\Phi_1(x,y)].
\end{equation} 
\end{remark}


The following lemma explains what the $0$-Taylor coefficient of a Lie $\infty$-morphism as in Definition \ref{def:morph}  induces  on the linear part of $(E,Q)$. More details will be given in Proposition \ref{prop:induced-action}.
\begin{convention}\label{conv:arity}
Let $E, F$ be  graded vector bundles over a manifold  $M$.
For a $\mathbb K$-linear map $P\colon \Gamma(S^\bullet(E^*))\longrightarrow~\Gamma(S^\bullet(E^*))$ we denote by $P^{(k)}\colon\Gamma(S^N(E^*))\longrightarrow~\Gamma(S^{N+k}(E^*)), \; N\geq 0$, the $k$-th polynomial degree component of $P$  and is called the \emph{arity $k$ component}  of $P$.
\end{convention}
\begin{lemma}\label{lemma:basic-action}
The $0$-th Taylor coefficient $\Phi_0\colon \mathfrak{g}[1]\longrightarrow \mathfrak X_0(E)$ of a Lie $\infty$-morphism $\Phi$ as in Definition \ref{def:morph} induces 

\begin{enumerate}
    \item a linear map $\varrho\colon \mathfrak{g}\longrightarrow \mathfrak{X}(M),\, x\longmapsto \left(\varrho(x)[f]:=\Phi_0(x)[f],\;\;f\in \mathcal{O}\right)$ and
    \item a linear map
$x\in\mathfrak{g}\longmapsto \nabla_x\in\mathrm{Der}_0(E)$, i.e., for each $x\in \mathfrak{g}$, $\nabla_x\colon \Gamma(E)\longrightarrow \Gamma(E)$ is a degree zero map  that satisfies $$\nabla_x(fe)=~f\nabla_x(e) + \varrho(x)[f]\,e,\;\; \text{for}\;\; f\in \mathcal{O}, e\in \Gamma(E).$$
such that  \begin{equation}\label{eq:dual-actionxx}
     \langle \Phi_0(x)^{(0)}(\alpha),e\rangle=\varrho(x)[\langle\alpha,e\rangle]-\langle\alpha,\nabla_x(e)\rangle,\;\;\text{for all $\alpha\in \Gamma(E^*),e\in \Gamma(E)$}.
 \end{equation}
 $\Phi_0(x)^{(0)}$ stands for the arity zero component of $\Phi_0(x)$.
\end{enumerate}

\end{lemma}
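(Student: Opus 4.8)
The plan is to read off both structures directly from the single fact that $\Phi_0(x)$ is a degree-zero graded derivation of $\E=\Gamma(S^\bullet(E^*))$, exploiting that the generators of $\E$ live in $S^0(E^*)=\mathcal{O}$ and $S^1(E^*)=\Gamma(E^*)$, with cohomological degree $0$ and $i$ respectively on $\Gamma(E^*_{-i})$. First I would establish item (1). Since $\Phi_0(x)$ has cohomological degree $0$ and $\E_0=\mathcal{O}$ (the only degree-$0$ functions are the polynomials of symmetric degree $0$, as all generators have positive degree), it maps $\mathcal{O}$ into $\mathcal{O}$; being a derivation, its restriction to $\mathcal{O}$ is a derivation of $\mathcal{O}$, hence a genuine vector field $\varrho(x)\in\mathfrak{X}(M)$, and linearity of $x\mapsto\varrho(x)$ is inherited from linearity of $\Phi_0$. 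I would also record that on $\mathcal{O}=S^0(E^*)$ the operator $\Phi_0(x)$ is automatically of arity zero, so $\varrho(x)[f]=\Phi_0(x)^{(0)}[f]$ for $f\in\mathcal{O}$; this is the seed of formula \eqref{eq:dual-actionxx}.

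Next I would analyse the arity-zero component $\Phi_0(x)^{(0)}$. The key structural observation is that every arity component of a derivation is again a derivation: comparing symmetric degrees on both sides of the Leibniz rule $\Phi_0(x)(\xi\eta)=\Phi_0(x)(\xi)\,\eta+\xi\,\Phi_0(x)(\eta)$ forces $\Phi_0(x)^{(0)}(\xi\eta)=\Phi_0(x)^{(0)}(\xi)\,\eta+\xi\,\Phi_0(x)^{(0)}(\eta)$. Hence $\Phi_0(x)^{(0)}$ preserves both the cohomological degree and the symmetric degree, so it restricts to a $\mathbb{K}$-linear, degree-preserving endomorphism of $\Gamma(E^*)$ obeying
$$\Phi_0(x)^{(0)}(f\alpha)=\varrho(x)[f]\,\alpha+f\,\Phi_0(x)^{(0)}(\alpha),\qquad f\in\mathcal{O},\ \alpha\in\Gamma(E^*).$$
In other words, $\Phi_0(x)^{(0)}|_{\Gamma(E^*)}$ is a $\varrho(x)$-derivation of $\Gamma(E^*)$, i.e. the directional part of a connection on $E^*$. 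No choice of auxiliary $TM$-connection is required here, since the arity filtration on $S^\bullet(E^*)$ is intrinsic; the identification \eqref{eq:identification} is only needed if one wants the full arity decomposition.

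Finally, I would define $\nabla_x$ by dualising. For $e\in\Gamma(E)$ set $\langle\alpha,\nabla_x(e)\rangle:=\varrho(x)[\langle\alpha,e\rangle]-\langle\Phi_0(x)^{(0)}(\alpha),e\rangle$, which is exactly \eqref{eq:dual-actionxx} rearranged. The one thing to check is that the right-hand side is $\mathcal{O}$-linear in $\alpha$, so that $\nabla_x(e)$ is a bona fide section of $E^{**}\cong E$ (finite rank): replacing $\alpha$ by $f\alpha$ and using the displayed $\varrho(x)$-derivation property, the two terms $\varrho(x)[f]\langle\alpha,e\rangle$ cancel, leaving $f\langle\alpha,\nabla_x(e)\rangle$. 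The Leibniz rule $\nabla_x(fe)=f\nabla_x(e)+\varrho(x)[f]\,e$ follows from the same bookkeeping, degree-preservation of $\nabla_x$ is inherited from that of $\Phi_0(x)^{(0)}$ so that $\nabla_x\in\mathrm{Der}_0(E)$, and linearity in $x$ is again inherited from $\Phi_0$. I expect the only genuinely delicate point to be the first structural claim of the middle paragraph — that passing to the arity-zero component commutes with the derivation property and that it lands in $\Gamma(E^*)$ rather than in higher symmetric powers — after which everything reduces to formal duality and the Leibniz identity.
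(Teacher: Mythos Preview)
Your argument is correct. The route differs slightly from the paper's: the paper \emph{defines} $\nabla_x$ through the commutator relation $[\Phi_0(x),\iota_e]^{(-1)}=\iota_{\nabla_x e}$ and then deduces the Leibniz rule and the duality formula~\eqref{eq:dual-actionxx} from that, whereas you go in the opposite order, first isolating the arity-zero component $\Phi_0(x)^{(0)}$ as a $\varrho(x)$-derivation of $\Gamma(E^*)$ and then \emph{defining} $\nabla_x$ by the dual formula. The two definitions agree, since for $\alpha\in\Gamma(E^*)$ one has $[\Phi_0(x)^{(0)},\iota_e](\alpha)=\varrho(x)[\langle\alpha,e\rangle]-\langle\Phi_0(x)^{(0)}(\alpha),e\rangle$. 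Your approach is a touch more elementary in that it never invokes $\iota_e$ and works purely with the intrinsic arity grading of $\E$; the paper's commutator formulation, on the other hand, makes the later compatibility computations (e.g.\ with $Q$ and the brackets $\ell_k$) slightly more transparent, since everything stays inside the graded Lie algebra $\mathfrak{X}_\bullet(E)$.
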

\begin{proof}
We have for every $x\in\mathfrak g[1]$, and $e\in \Gamma(E)$,\,  $$[\Phi_0(x), \iota_e]^{(-1)}=\iota_{\nabla_xe},$$ for some $\mathbb{K}$-bilinear map $\nabla_x\colon \Gamma(E_{-\bullet})\longrightarrow \Gamma(E_{-\bullet})$ that depends linearly on $x\in\mathfrak g[1]$ and that satisfies \begin{equation}
    \label{eq:linear-part-der}
    \nabla_x(fe)=~f\nabla_x(e) + \varrho(x)[f]e,\; \text{for $f\in \mathcal{O}, e\in \Gamma(E)$}.
\end{equation}
To see \eqref{eq:linear-part-der}, compute $[\Phi_0(x), \iota_{fe}]^{(-1)}$: \begin{align*}
    \iota_{\nabla_x(fe)}&=[\Phi_0(x), \iota_{(fe)}]^{(-1)}\\&=\Phi_0(x)[f]\iota_e + f[\Phi_0(x), \iota_e]^{(-1)}\\&=\iota_{\varrho(x)[f]e + \nabla_xe}.
\end{align*}

In particular, one has for all $\alpha\in \Gamma(E^*),e\in \Gamma(E)$, \begin{align*}
    \langle \Phi_0(x)^{(0)}(\alpha),e\rangle&=\Phi_0(x)^{(0)}[\langle \alpha,e\rangle]-[\Phi_0(x)^{(0)}, \iota_e]^{(-1)}(\alpha)\\&=\varrho(x)[\langle\alpha,e\rangle]-\langle\alpha,\nabla_x(e)\rangle.
\end{align*}
\end{proof}

\noindent\textbf{Homotopies}. Now we are defining homotopy between Lie $\infty$-morphisms.
    
\begin{definition}\label{homp:def}
Let $\Bar{\Phi},\Bar{\Psi}\colon (S_\mathbb{K}^\bullet(\mathfrak{g}[1]),Q_\mathfrak{g})\rightsquigarrow \left(S_\mathbb{K}^\bullet(\mathfrak X(E)[1]),\Bar{Q}\right)$ be Lie $\infty$-morphisms. We say $\Bar{\Phi},\Bar{\Psi}$ are \emph{homotopic over the identity of $M$} if the following conditions hold:\begin{enumerate}
    \item there a piecewise rational continuous path $t\in[a,b]\mapsto\Xi_t\colon(S_\mathbb{K}^\bullet\mathfrak{g}[1],Q_\mathfrak{g})\rightsquigarrow \left(S_\mathbb{K}^\bullet(\mathfrak X(E)[1]),\Bar{Q}\right)$ made of Lie $\infty$-morphisms  that coincide with $\Bar{\Phi}$ and $\Bar{\Psi}$ at $t=a$ and $b$, respectively,
     
 \item and a piecewise rational path $t\in[a,b]\mapsto H_t$ of $\Xi_t$-coderivations of degree $-1$ such that \begin{equation}\label{eq:homotopy-formula}
        \frac{\dd\Xi_t }{\dd t}=\Bar{Q}\circ H_t+H_t\circ Q_{\mathfrak{g}}.
    \end{equation}
\end{enumerate}
\end{definition}

\begin{remark}
Homotopy equivalence in the sense of the Definition \ref{homp:def} is an equivalence relation, and it is compatible with composition of Lie $\infty$-morphisms. Also, we can  “glue” infinitely many equivalences, as in Lemma 1.39 in \cite{CLRL}.
\end{remark}

\begin{remark}
Definition \ref{homp:def} is slightly  more general than the equivalence relation \cite{MEHTA2012576}. In \cite{MEHTA2012576}, it is explained that Lie $\infty$-oid morphisms are Maurer-Cartan elements in some Lie $\infty$-algebroid $\mathfrak{g}[1]\oplus E$ of certain form, and they define equivalence as gauge-equivalence of the Maurer-Cartan elements. This gauge equivalence corresponds to homotopies as above, for which all functions are smooth. Also, we do not require nilpotence unlike in Definition 5.1 of  \cite{MEHTA2012576}. Last, we do not assume $\mathfrak{g}$ to be of finite dimension.
\end{remark}

The following Proposition shows that the notion of homotopy given in Definition \ref{homp:def} implies the usual notion of homotopy between chain maps.
\begin{proposition}\label{prop:homotpy-usual}
Let $\Phi,\Psi\colon(\mathfrak{g}[1],\lb_\mathfrak{g})\rightsquigarrow (\mathfrak X_\bullet(E)[1],\lb,\mathrm{ad}_{Q})$ be Lie $\infty$-morphisms which are homotopic. Then, \begin{equation}\label{eq:comp-homtopy}
    \Psi-\Phi=\Bar Q \circ H+H\circ Q_{\mathfrak{g}}
\end{equation}for some $\mathcal{O}$-linear map $H\colon S_\mathbb{K}^\bullet(\mathfrak{g}[1])\longrightarrow S^\bullet_\mathbb{K}(\mathfrak{X}(E)[1])$ of degree $-1$.
\end{proposition}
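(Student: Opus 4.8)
The plan is to integrate the defining ODE of the homotopy. By Definition \ref{homp:def}, the hypothesis that $\Phi$ and $\Psi$ are homotopic over the identity of $M$ supplies a piecewise rational continuous path $t\in[a,b]\mapsto\Xi_t$ of Lie $\infty$-morphisms with $\Xi_a=\Bar{\Phi}$ and $\Xi_b=\Bar{\Psi}$, together with a piecewise rational path $t\mapsto H_t$ of $\Xi_t$-coderivations of degree $-1$ satisfying
$$\frac{\dd\Xi_t}{\dd t}=\Bar{Q}\circ H_t+H_t\circ Q_{\mathfrak{g}}.$$
First I would integrate both sides over $[a,b]$. Since $t\mapsto\Xi_t$ is continuous and piecewise rational, the fundamental theorem of calculus, applied on each of the finitely many pieces and then summed, gives $\int_a^b\frac{\dd\Xi_t}{\dd t}\,\dd t=\Xi_b-\Xi_a=\Bar{\Psi}-\Bar{\Phi}$.

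Next I would set $H:=\int_a^b H_t\,\dd t$, the integral being taken componentwise (degree by degree on the symmetric coalgebra). This is well defined because $t\mapsto H_t$ is continuous and piecewise rational on the compact interval $[a,b]$, hence integrable, and since integration preserves degrees, $H$ is a degree $-1$ map $S_\mathbb{K}^\bullet(\mathfrak{g}[1])\longrightarrow S_\mathbb{K}^\bullet(\mathfrak{X}(E)[1])$. The crucial observation is that the operators $\Bar{Q}$ and $Q_{\mathfrak{g}}$ on the right-hand side do \emph{not} depend on $t$; therefore, by linearity of these fixed maps with respect to the integral,
$$\int_a^b\left(\Bar{Q}\circ H_t+H_t\circ Q_{\mathfrak{g}}\right)\dd t=\Bar{Q}\circ\left(\int_a^b H_t\,\dd t\right)+\left(\int_a^b H_t\,\dd t\right)\circ Q_{\mathfrak{g}}=\Bar{Q}\circ H+H\circ Q_{\mathfrak{g}}.$$
Combining the two displays yields $\Bar{\Psi}-\Bar{\Phi}=\Bar{Q}\circ H+H\circ Q_{\mathfrak{g}}$, which is exactly \eqref{eq:comp-homtopy}.

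I do not expect a genuine obstacle here: the content of the proposition is precisely that the refined, integrated homotopy of Definition \ref{homp:def} specializes to the classical chain-homotopy formula, and the passage is essentially a one-line integration once the bookkeeping of the paths is in place. The only points requiring a little care are the justification that the componentwise integral commutes with the constant operators $\Bar{Q}$ and $Q_{\mathfrak{g}}$ (immediate from linearity, checked degree by degree), and the $\mathcal{O}$-linearity of $H$. For the latter I would note that each $H_t$ is $\mathcal{O}$-linear: the phrase \emph{over the identity of $M$} forces the induced base maps of all the $\Xi_t$ to be the identity, so the interpolating coderivations $H_t$ are vertical and hence act $\mathcal{O}$-linearly; as integration of $\mathcal{O}$-linear maps is again $\mathcal{O}$-linear, $H=\int_a^b H_t\,\dd t$ is $\mathcal{O}$-linear, completing the argument.
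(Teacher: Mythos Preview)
Your proof is correct and follows exactly the same idea as the paper: integrate the homotopy equation \eqref{eq:homotopy-formula} over $[a,b]$ and set $H=\int_a^b H_t\,\dd t$. The paper's own proof is in fact a single sentence (``apply the property that the variation of a piecewise-$C^\infty$ map equals the integral of its derivative''), so you have simply spelled out the details that the paper leaves implicit.
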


\begin{proof}
The proof follows by applying the property that says the variation of a piecewise-$C^\infty$ map is equal to the integral of its derivative on \eqref{eq:homotopy-formula}.

\end{proof}

\begin{proposition}\label{prop:induced-action}
Let $\mathfrak{g}$ be a Lie algebra and $(E,Q)$ a Lie $\infty$-algebroid over $M$.  

\begin{enumerate}
    \item Any {Lie $\infty$-morphism} $\Phi\colon (\mathfrak{g}[1],\lb_\mathfrak{g})\rightsquigarrow (\mathfrak X_\bullet(E)[1],\lb,\mathrm{ad}_{Q})$ induces a weak symmetry action of $\mathfrak{g}$ on  the basic singular foliation $\mathcal{F}=\rho(\Gamma(E_{-1}))$ of $(E,Q)$.
    \item Homotopic Lie $\infty$-morphisms ${\Phi},{\Psi}\colon (\mathfrak{g}[1],\lb_\mathfrak{g})\rightsquigarrow (\mathfrak X_\bullet(E)[1],\lb,\mathrm{ad}_{Q})$ induce equivalent weak symmetry actions $\varrho_a, \varrho_b$ of $\mathfrak{g}$ on  the basic singular foliation $\mathcal{F}$.
\end{enumerate}
\end{proposition}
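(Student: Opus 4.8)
The plan is to extract the induced action from the low-arity Taylor coefficients of $\Phi$ and to verify the two defining properties of a weak symmetry action directly from the $L_\infty$-morphism relations already recorded in Remark \ref{Rmk:CE1} and Lemma \ref{lemma:basic-action}. Concretely, I would set $\varrho(x)[f]:=\Phi_0(x)[f]$ for $x\in\mathfrak g$, $f\in\mathcal O$, as in Lemma \ref{lemma:basic-action}(1), and let $\nabla_x\in\mathrm{Der}_0(E)$ be the degree zero derivation furnished by Lemma \ref{lemma:basic-action}(2), so that the dual-action formula \eqref{eq:dual-actionxx} holds. The whole of item (1) then amounts to decoding the two identities $[Q,\Phi_0(x)]=0$ and $\Phi_0([x,y]_\mathfrak g)-[\Phi_0(x),\Phi_0(y)]=[Q,\Phi_1(x,y)]$ of Remark \ref{Rmk:CE1} at the level of functions.

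For the first axiom $[\varrho(x),\mathcal F]\subseteq\mathcal F$, I would evaluate $[Q,\Phi_0(x)]=0$ on $f\in\mathcal O$, obtaining $Q(\varrho(x)[f])=\Phi_0(x)(Q(f))$. Pairing both sides with an arbitrary $e\in\Gamma(E_{-1})$, using that $\langle Q(f),e\rangle$ computes the anchor as $\rho(e)[f]$, and feeding $\alpha=Q(f)$ into \eqref{eq:dual-actionxx} (only the arity zero component of $\Phi_0(x)$ acting on $Q(f)\in\Gamma(E_{-1}^*)$ contributes to a function), I get $\rho(\nabla_x e)[f]=\varrho(x)[\rho(e)[f]]-\rho(e)[\varrho(x)[f]]=[\varrho(x),\rho(e)][f]$ for all $f$. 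Hence $\rho(\nabla_x e)=[\varrho(x),\rho(e)]$. Since $\nabla_x$ has degree zero it preserves $\Gamma(E_{-1})$, so $\rho(\nabla_x e)\in\mathcal F$, and because $e$ is arbitrary and $\mathcal F=\rho(\Gamma(E_{-1}))$ this gives $[\varrho(x),\mathcal F]\subseteq\mathcal F$.

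For the second axiom I would evaluate $\Phi_0([x,y]_\mathfrak g)-[\Phi_0(x),\Phi_0(y)]=[Q,\Phi_1(x,y)]$ on $f\in\mathcal O$. The left-hand side restricts to $\varrho([x,y]_\mathfrak g)[f]-[\varrho(x),\varrho(y)][f]$, since the commutator of the $\Phi_0$'s restricted to $\mathcal O$ is the commutator of the projected fields. On the right, $\Phi_1(x,y)$ has degree $-1$ and is therefore vertical, so $\Phi_1(x,y)(f)=0$ and $[Q,\Phi_1(x,y)](f)=\Phi_1(x,y)(Q(f))$; only the arity $-1$ part of $\Phi_1(x,y)$ lands in $\mathcal O$, and that part is of the form $\iota_{\eta(x,y)}$ for a unique $\eta(x,y)\in\Gamma(E_{-1})$, giving $\Phi_1(x,y)(Q(f))=\rho(\eta(x,y))[f]$. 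Thus $\varrho([x,y]_\mathfrak g)-[\varrho(x),\varrho(y)]=\rho(\eta(x,y))\in\mathcal F$, which establishes item (1). For item (2), I would invoke Proposition \ref{prop:homotpy-usual}: homotopic morphisms satisfy $\Psi-\Phi=\Bar Q\circ H+H\circ Q_\mathfrak g$ for some degree $-1$ map $H$. Taking arity zero and using $Q_\mathfrak g^{(0)}=0$ and $\Bar Q^{(0)}=\mathrm{ad}_Q$ yields $\Psi_0(x)-\Phi_0(x)=[Q,H_0(x)]$ with $H_0(x)$ vertical of degree $-1$; evaluating on $f$ and repeating the verticality argument gives $\varrho_\Psi(x)-\varrho_\Phi(x)=\rho(\beta(x))$, where $\beta(x)\in\Gamma(E_{-1})$ is the arity $-1$ part of $H_0(x)$. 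Hence $\varrho_\Psi-\varrho_\Phi=\rho\circ\beta$ is $\mathcal F$-valued, so the induced actions are equivalent in the sense of Definition \ref{def:sym}.

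The routine parts are the arity and sign bookkeeping (the graded commutator signs for $|Q|=1$, $|\Phi_0|=0$, $|\Phi_1|=-1$, and the convention absorbed into the definition of $\eta$). The main obstacle, and the only genuinely structural point, is the first axiom: one must correctly translate the cocycle condition $[Q,\Phi_0(x)]=0$ through the $NQ$-manifold/anchor dictionary to recover the bracket-level relation $\rho\circ\nabla_x=\mathrm{ad}_{\varrho(x)}\circ\rho$, being careful that when pairing $\Phi_0(x)(Q(f))$ with a single section $e$ it is exactly the arity zero component $\Phi_0(x)^{(0)}$ governed by \eqref{eq:dual-actionxx} that intervenes.
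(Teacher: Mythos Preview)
Your proof is correct and follows essentially the same route as the paper's own argument: you define $\varrho$ and $\nabla_x$ via Lemma \ref{lemma:basic-action}, extract $[\varrho(x),\rho(e)]=\rho(\nabla_x e)$ from $[Q,\Phi_0(x)]=0$ by pairing $Q(f)$ against $e$ through \eqref{eq:dual-actionxx}, read off $\varrho([x,y]_\mathfrak g)-[\varrho(x),\varrho(y)]=\rho(\eta(x,y))$ from the arity $-1$ part of \eqref{eq:second-condition}, and handle item (2) via Proposition \ref{prop:homotpy-usual} exactly as the paper does. Your exposition is in fact slightly more explicit about why only the arity $0$ (resp.\ arity $-1$) components contribute when pairing with a single section, which is a welcome clarification.
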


\begin{proof}
Item \emph{1}. is a consequence of Remark \ref{Rmk:CE1}. Indeed,  take $\varrho\colon \mathfrak{g}  \longrightarrow \mathfrak{X}(M)$ as in Lemma \ref{lemma:basic-action}(\emph{1}). We claim that $\varrho$ is a weak symmetry action of $\mathfrak{g}$ on  $\mathcal{F}$: Let $x,y\in \mathfrak g[1]$, and $e\in \Gamma(E_{-1})$ and $f\in \mathcal{O}$.
\begin{itemize}
    \item  $[\Phi_0(x), Q]=0$ entails, \begin{align*}
        \left\langle\Phi_0(x)^{(0)}\left[Q^{(1)}(f)\right],e\right\rangle&=\left\langle Q^{(1)}\left(\Phi_0(x)^{(0)}[f]\right),e\right\rangle\\\varrho(x)[\langle Q[f],e\rangle]-\left\langle Q[f],\nabla_x(e)\right\rangle&=\rho(e)[\varrho(x)],\hspace{0.5cm}\text{(by Lemma \ref{lemma:basic-action} (\emph{2.}))}\\\varrho(x)[\rho(e)][f]-\rho(\nabla_x(e))[f]&=\rho(e)[\varrho(x)]
    \end{align*}
By consequence, $[\varrho(x), \rho(e)]=\rho(\nabla_x(e))\in \mathcal{F}$. Therefore, $[\varrho(x), \mathcal{F}]\subseteq \mathcal{F}$.

    \item There exists a skew-symetric linear map $\eta\colon \wedge^2 \mathfrak{g}\longrightarrow \Gamma(E_{-1})$ such that $\Phi_1(x,y)^{(-1)}=\iota_{\eta(x,y)}$. Therefore, the arity zero of Equation \eqref{eq:second-condition} evaluated at an arbitrary function $f\in \mathcal{O}$ yields:
\begin{align*}
     &\Phi_0([x,y]_{\mathfrak{g}})^{(0)}(f)-[\Phi_0(x),\Phi_0(y)]^{(0)}(f)=[Q,\Phi_1(x,y)]^{(0)}(f)\\\Longrightarrow \;&\Phi_0([x,y]_{\mathfrak{g}})(f)-[\Phi_0(x)^{(0)},\Phi_0(y)^{(0)}](f)=[Q^{(1)},\Phi_1(x,y)^{(-1)}](f)\\\Longrightarrow &\; \varrho([x,y]_\mathfrak{g})[f]-[\varrho(x), \varrho(y)][f]=[Q^{(1)},\iota_{\eta(x,y)}](f)\\\phantom{\Longrightarrow} &\; \phantom{\varrho([x,y]_\mathfrak{g})[f]-[\varrho(x), \varrho(y)][f]}=\rho(\eta(x,y))[f].
\end{align*}Since $f$ is arbitrary, this proves item \emph{1.} Using Proposition \ref{prop:homotpy-usual}, $\Phi\sim\Psi$ implies for $x\in \mathfrak{g}[1]$ that
\begin{align}
   \nonumber \Psi(x)-\Phi(x)&=\Bar{Q}\circ H(x)+\cancel{H\circ Q_{\mathfrak{g}}(x)}\\\label{eq:sym-equi}&=[Q, H(x)]
\end{align}
with $H\colon \mathfrak{g}[1]\longrightarrow \mathfrak{X}_{-1}(E)$ a linear map. Let $\beta\colon \mathfrak{g}[1]\longrightarrow \Gamma(E_{-1})$ be a linear map such that $H(x)^{(-1)}=\iota_{\beta(x)}$. Taking the arity zero of both sides in Equation \eqref{eq:sym-equi} and evaluating at  $f\in \mathcal{O}$ we obtain that \begin{align*}
        \left(\varrho_a(x)-\varrho_b(x)\right)[f]= [Q^{(1)}, H(x)^{(-1)}]= [Q^{(1)}, \iota_{\beta(x)}][f]=\rho(\beta(x))[f].
\end{align*}
Since $f$ is arbitrary, this proves item ${2}$.

\end{itemize}
\end{proof}

Proposition \ref{prop:induced-action} tells us that {Lie $\infty$-morphism} $\Phi\colon (\mathfrak{g}[1],\lb_\mathfrak{g})\rightsquigarrow (\mathfrak X_\bullet(E)[1],\lb,\text{ad}_{Q})$ induces weak symmetry action on the base manifold $M$. In Section \ref{sec:3}, we investigate the opposite direction. We respond to the following question: Do any weak symmetry action of a Lie algebra on a singular foliation comes from a Lie $\infty$-morphism? If so, can we extend in a unique manner?

\section{Proof of Theorem \ref{alt-thm-res}}\label{proof:Lie-infty} 
In this section, we prove Theorem \ref{alt-thm-res}. For $\mathfrak{g}$ a Lie algebra, let $\Gamma(\mathfrak{g})$ stand for sections of $\mathfrak g\times M\longrightarrow M$.

\begin{proof}{(of Theorem \ref{alt-thm-res})}. Let $\varphi\colon \wedge^2 \mathfrak{g}\longrightarrow\mathcal{F}$ be  such that $\varrho([x,y]_\mathfrak{g})-[\varrho(x),\varrho(y)]=\varphi(x,y)\in\mathcal{F}$ for all $x,y\in\mathfrak{g}$. Notice that $\mathcal{A}:=\left(\Gamma (\mathfrak g)\oplus \mathcal{F}, \lb_\mathcal{A}, \rho_\mathcal A\right)$ is a Lie-Rinehart algebra over $\mathcal{O}$, whose Lie bracket and anchor map are given respectively on a set of generators $(x_i)_{i\in I}$  of $\Gamma(\mathfrak g[1])$ and $(X_\lambda)_{\lambda\in \Lambda}$ of  $\mathcal{F}$ by:\begin{enumerate}
    \item for $i,j\in I$ and $\lambda, \beta\in \Lambda$, \begin{equation}\label{LR-lb}
        [(x_i,X_\lambda), (x_j, X_\beta)]_\mathcal{A}:=\left([x_i,x_j]_\mathfrak{g}, [X_\lambda, \varrho(x_j)]-[X_\beta, \varrho(x_i)]-\varphi(x_i,x_j)+[X_\lambda,X_\beta]\right)
    \end{equation}
    \item for $i\in I$ and $\lambda\in \Lambda$ \begin{equation}\label{LR-anch}
        \rho_\mathcal{A}(x_i,X_\lambda)=\varrho(x_i)+X_\lambda
    \end{equation}
\end{enumerate}
We extend the bracket \eqref{LR-lb} by Leibniz identity. Also, $\rho_\mathcal A$ in \eqref{LR-anch} is extended by $\mathcal O$-linearity (it is a morphism by construction).\\

\noindent
For any free resolution $(\mathcal{K}_\bullet, \ell_1, \rho)$ of $\mathcal{F}$, the sequence  \begin{equation}\label{eq:free-resolution}
   \xymatrix{} \cdots\stackrel{\ell_1}{\longrightarrow}\mathcal{K}_{-2}\stackrel{\ell_1}{\longrightarrow}\Gamma (\mathfrak{g}[1])\oplus\mathcal{K}_{-1} \stackrel{ \pi=\mathrm{id}\oplus\rho}{\xrightarrow{\hspace*{1cm}}}\Gamma (\mathfrak{g})\oplus\mathcal{F}
\end{equation} is a free resolution of $\mathcal{A}=\Gamma (\mathfrak{g})\oplus\mathcal{F}$. By Theorem 2.1 of \cite{CLRL}, the complex \eqref{eq:free-resolution} can be equipped with a Lie $\infty$-algebroid whose unary bracket is $\ell_1$ and whose anchor is $\rho':=\rho_\mathcal{A}\circ\pi$. {But we claim that we can add some constraint on the $k$-ary brackets that appear in Theorem 2.1. This requires to adapt its proof to this particular setting.}\\


\noindent 
\textbf{Construction of a $2$-ary bracket on $\mathfrak{g}[1]\oplus E_{-1}$}: Let us denote by $(e_\lambda^{(-1)})_{\lambda \in \Lambda}$ a free basis of $\mathcal{K}_{-1}$. The set $\{X_\lambda=\rho(e_\lambda^{(-1)})\in \mathcal{F}\mid \lambda\in \Lambda\}$ is a set of generators of $\mathcal{F}$. Let $(x_i)_{i\in I}$ be a basis for $\Gamma(\mathfrak g[1])$ There exists elements $c^k_{\lambda\beta}\in\mathcal{O}$
and satisfying the skew-symmetry condition $c^\alpha_{\lambda\beta}=-c^\alpha_{\beta\lambda}$ together with
\begin{equation} \label{def:uijk}
\left[ X_\lambda,X_\beta\right]=\sum_{\alpha\in \Lambda}c^\alpha_{\lambda\beta} X_\alpha \hspace{.5cm} \forall \lambda,\beta \in \Lambda.
\end{equation}
By definition of the weak symmetry action $\varrho$, one has \begin{equation}
  [\varrho(x_i),\rho(e_\lambda^{(-1)})]\in \mathcal{F}\quad\text{and}\quad \varrho([x_i,x_j])_\mathfrak{g}-[\varrho(x_i),\varrho(x_j)]\in\mathcal{F}\;\;\text{for all}\; (i,j)\in I^2\, \text{and}\, \lambda\in \Lambda.  
\end{equation}
 Since $(\mathcal{K}_\bullet, \ell_1, \rho)$ is a free resolution of $\mathcal F$, there exists two $\mathcal O$-bilinear maps $\chi\colon\Gamma({\mathfrak{g}[1]})\times \mathcal{K}_{-1}\to ~\mathcal{K}_{-1}$ and   $\eta\colon\Gamma({\mathfrak{g}[1]})\times\Gamma({\mathfrak{g}[1]})\to \mathcal{K}_{-1}$ defined on generators $x_i, e_\lambda^{(-1)}$ by the relations\begin{equation*}
    [\varrho(x_i),\rho(e_\lambda^{(-1)})]=\rho(\chi(x_i,e_\lambda^{(-1)}))\qquad\text{and}\qquad \varrho([x_i,x_j]_\mathfrak{g})-[\varrho(x_i),\varrho(x_j)]=\rho(\eta(x_i,x_j)).
\end{equation*}
We define a $2$-ary bracket on $\Gamma(\mathfrak{g}[1])\oplus\mathcal{K}_{-1}$ as follows: 
\begin{enumerate}
\item an anchor map by $\rho'(0\oplus e_\lambda^{(-1)})=X_\lambda$, and $\rho'(x_i\oplus 0)=\varrho(x_i)$, for all $i\in I$ and  $\lambda\in \Lambda$,
\item a degree $ +1$ graded symmetric operation $\ell'_2 $ on $\Gamma(\mathfrak{g}[1])\oplus \mathcal{K}_{-1}$ as follows: for all $i,j$ and $\lambda, \beta\in \Lambda$\\

\begin{enumerate}
    \item $\ell'_2\left(0\oplus e_\lambda^{(-1)},0\oplus e_\beta^{(-1)} \right) =0\oplus\sum_{\alpha\in \lambda}c^\alpha_{\lambda\beta} e_\alpha^{(-1)}$,
    \item $\ell'_2\left(x_i\oplus 0,0 \oplus e_\lambda^{(-1)} \right)=0\oplus \chi\left(x_i ,e_\lambda^{(-1)}\right)$,
     \item $\ell'_2\left(x_i\oplus 0,x_j\oplus 0\right)=[x_i,x_j]_{\mathfrak{g}}\oplus\eta(x_i,x_j)$.
\end{enumerate} 
\end{enumerate}  We extend $\ell'_2$ to $\Gamma(\mathfrak{g}[1])\oplus\mathcal{K}_{-1}$  using Leibniz identity with respect to the anchor map $\rho'$.

By construction $\ell'_2$ satisfies the Leibniz identity with respect to the anchor $\rho'$. Moreover, $\rho'$ is  a bracket morphism. We continue exactly as in the proof of Lemma 2.23 in \cite{CLRL} and construct a 2-ary bracket $\ell_2$ of degree +1 whose restriction to $\Gamma (\mathfrak{g}[1])\oplus\mathcal{K}_{-1}$ is $\ell'_2$. It equips the complex \eqref{eq:free-resolution} with an almost Lie algebroid such that $\rho'\left(\Gamma (\mathfrak{g}[1])\oplus\mathcal{K}_{-1}\right)=\mathcal{A}$ and whose restriction to $\mathfrak{g}[1]$ is the Lie bracket of $\mathfrak{g}[1]$. A close look at the construction in Theorem 2.1 of \cite{CLRL} implies that this almost differential graded Lie algebroid can be extended to a Lie $\infty$-algebroid on the complex \eqref{eq:free-resolution}.
\end{proof}
\bibliographystyle{plain}
\bibliography{Sym_biblio}
\end{document}